\definecolor{NTNUblue}{RGB}{0,80,158}
\definecolor{NTNUbluesupport}{RGB}{62,98,138}
\definecolor{NTNUorange}{RGB}{239,129,20}
\newcommand{\bi}{\begin{itemize}}
\newcommand{\ei}{\end{itemize}}
\newcommand{\bn}{\begin{enumerate}}
\newcommand{\en}{\end{enumerate}}
\newcommand{\bq}{\begin{equation}}
\newcommand{\eq}{\end{equation}}
\newcommand{\A}{{\mathbb{A}}}
\newcommand{\C}{{\mathbb{C}}}
\newcommand{\DD}{\Delta}
\newcommand{\ED}{E_{\Dh}}
\newcommand{\MUD}{MU_{\Dh}}
\newcommand{\MUhs}{MU_{\hs}}
\newcommand{\Q}{{\mathbb{Q}}}
\newcommand{\R}{{\mathbb{R}}}
\newcommand{\Z}{{\mathbb{Z}}}
\newcommand{\Ab}{\mathrm{Ab}}
\newcommand{\cl}{\mathrm{cl}}
\newcommand{\codim}{\mathrm{codim}\,}
\newcommand{\geo}{\mathrm{geo}}
\newcommand{\Gr}{\mathrm{Gr}}
\newcommand{\Grml}{\Gr_m(\C^{m+l})}
\newcommand{\hs}{\mathrm{hs}}
\newcommand{\id}{\mathrm{id}}
\newcommand{\inc}{\mathrm{inc}}
\newcommand{\pinch}{\mathrm{pinch}}
\renewcommand{\mod}{\;\mathrm{mod}\;}
\newcommand{\pt}{\mathrm{pt}}
\newcommand{\sm}{\mathrm{sm}}
\newcommand{\simp}{\mathrm{simp}}
\newcommand{\sing}{\mathrm{sing}}
\newcommand{\Sing}{\mathrm{Sing}\,}
\newcommand{\colim}{\operatorname*{colim}}
\newcommand{\Hom}{\mathrm{Hom}}
\newcommand{\Imm}{\mathrm{Im}\,}
\newcommand{\Map}{\mathrm{Map}}
\newcommand{\Mapp}{\mathrm{Map}_{\ast}}
\newcommand{\mappsm}{\mathrm{Map}_{\ast}^{\sm}}
\newcommand{\mapsm}{\mathrm{Map}^{\sm}}
\newcommand{\Mappsm}{\mathrm{Map}_{\ast}^{\sm}}
\newcommand{\Mapsm}{\mathrm{Map}^{\sm}}
\newcommand{\Man}{\mathbf{Man}}
\newcommand{\Manc}{\mathbf{Man}_{\C}}
\newcommand{\sPre}{\mathbf{sPre}}
\newcommand{\sPrep}{\mathbf{sPre}_*}
\newcommand{\hosPre}{\mathrm{ho}\sPre}
\newcommand{\hoSp}{\mathrm{ho}\mathrm{Sp}}
\newcommand{\Ahs}{\Ah_{\hs}}
\newcommand{\hoSpPre}{\mathrm{hoSp}(\sPrep)}
\newcommand{\Thom}{\mathrm{Th}}
\newcommand{\Ah}{{\mathcal A}}
\newcommand{\Ch}{{\mathcal C}}
\newcommand{\Dh}{{\mathcal D}}
\newcommand{\Ds}{\mathscr{D}}
\newcommand{\HXAFp}{H^{n-1}\left(X;\frac{\Ah^*}{F^p}(\Vh_*)\right)}
\newcommand{\xto}{\xrightarrow}
\newcommand{\gb}{g_{\bullet}}
\newcommand{\hb}{h_{\bullet}}
\newcommand{\omb}{\omega_{\bullet}}
\newcommand{\gp}{{g_{\pitchfork}}}
\newcommand{\gsm}{g_{\sm}}
\newcommand{\rhog}{\rho_{\nabla}} 
\newcommand{\Mappf}{\Mapp^{\pitchfork}} 
\newcommand{\Mapf}{\Map^{\pitchfork}}
\newcommand{\Vh}{{\mathcal V}}
\newcommand{\into}{\hookrightarrow}
\newcommand{\hfcycle}{\gamma}
\newcommand{\geocycle}{\widetilde f}
\newcommand{\geocycles}{\widetilde{ZMU}}
\newcommand{\geocob}{\widetilde b}
\newcommand{\trivbr}{\underline{\mathbb R}}
\newcommand{\trivbc}{\underline{\mathbb C}}
\newcommand{\pbalpha}{F_{\alpha}}
\newcommand{\ManF}{\mathbf{Man}_F}
\DeclareMathOperator*{\supp}{supp}
\DeclareMathOperator*{\Graph}{Graph}
\newtheorem{theorem}{Theorem}[section]
\newtheorem{lemma}[theorem]{Lemma}
\newtheorem{prop}[theorem]{Proposition}
\theoremstyle{definition}
\newtheorem{defn}[theorem]{Definition}
\newtheorem{example}[theorem]{Example}
\newtheorem{remark}[theorem]{Remark}
\begin{document}

\author{Knut Bjarte Haus}
\address{Department of Mathematical Sciences, NTNU, NO-7491 Trondheim, Norway}
\email{knut.b.haus@gmail.com}

\author{Gereon Quick}
\thanks{The second-named author was partially supported by the RCN Project No.\,313472 {\it Equations in Motivic Homotopy}.
} 
\address{Department of Mathematical Sciences, NTNU, NO-7491 Trondheim, Norway}
\email{gereon.quick@ntnu.no}

\title{Geometric Hodge filtered complex cobordism}

\date{}

\begin{abstract}
We construct a geometric cycle model for a Hodge filtered extension of complex cobordism for every smooth manifold with a filtration on its de Rham complex with complex coefficients. 
Using a refinement of the Pontryagin--Thom construction, we construct an explicit isomorphism between our geometric model and the abstract model of Hodge filtered complex cobordism of Hopkins--Quick for every complex manifold with the Hodge filtration. 
\end{abstract}
\footnotetext[1]{\it 2020 Mathematics Subject Classification: \rm 55N22, 57R19, 14F43, 32C35.}

\maketitle

\tableofcontents

\section{Introduction}
\label{sec:intro}

Characteristic classes play a fundamental role in the analysis of vector bundles. 
Let $M$ be a smooth manifold and $\pi \colon E\to M$ be a smooth vector bundle.  
While ordinary characteristic classes just depend on the topology of $M$ and $E$, the presence of a connection $\nabla$ on $E$ allows to define more refined secondary classes which depend on the geometry encoded by $\nabla$. 
One may give a unified description of primary and this type of secondary classes using the 
differential characters of Cheeger--Simons \cite{CheegerSimons}, or smooth Deligne cohomology. These are examples of differential cohomological invariants which  refine singular cohomology. 
In \cite{hs}, Hopkins--Singer show that every topological cohomology theory has a differential refinement for smooth manifolds.  
Differential refinements play, in particular, an important role in mathematical physics as they allow for a conceptual interpretation of quantization conditions in field theories (see e.g.\,\cite{CheegerSimons}, \cite{Freed}, \cite{Grady-Sati}, \cite{hs}, \cite{satischreiber}, and \cite{Witten}). 
For $K$-theory on smooth manifolds there are various geometric models for differential refinements, for example  
structured vector bundles of Simons--Sullivan \cite{SS} and differential K-theory of Freed--Lott \cite{FreedLott}.

Since many manifolds studied in mathematical physics are equipped with a complex structure, for example Calabi--Yau manifolds in string theory and mirror symmetry, it is desirable to have refinements  of topological cohomology theories which track the complex structure. 
For a complex manifold $X$, an analog of smooth Deligne cohomology is complex analytic Deligne cohomology \cite{Hodge2} which takes the Hodge filtration on forms and thereby the complex structure on $X$ into account. 
In \cite{Esnault}, Esnault studies secondary classes of flat bundles on analytic manifolds. 
In \cite{karoubi43}, Karoubi defined multiplicative $K$-theory which we may consider as a Hodge filtered extension of complex $K$-theory in our terminology. 
In \cite{hfc}, the authors construct a Hodge filtered extension, denoted $\ED$, of every rationally even cohomology theory $E$ following the work of Hopkins--Singer in \cite{hs}.  
In particular, they define a Hodge filtered extension of complex cobordism represented by the Thom spectrum $MU$.  
This theory shares the same relationship with complex analytic Deligne cohomology as differential cobordism of \cite{hs} does with smooth Deligne cohomology. 
Recently, Benoist showed in \cite{Benoist} that Hodge filtered cobordism of \cite{hfc} can be used to study subtle phenomena for non-algebraic cohomology classes for algebraic varieties. \\

The definition of the groups $\ED^n(p)(X)$, for integers $n$, $p$ and complex manifold $X$, in \cite{hfc} has the advantage that many important properties follow directly from the construction of $\ED(p)$ as a homotopy pullback as in \eqref{eq:MUDp_htpy_pullback_intro} below. 
However, it also has the disadvantage that it is rather abstract which makes it difficult to describe elements in  $\ED^n(p)(X)$ explicitly. 
It is therefore highly desirable to have an alternative and more concrete construction of $\ED^n(p)(X)$. 
To provide such a concrete and geometric description for $E=MU$ is the purpose of this paper. 
We will do this by showing that, for every integer $p$, $\MUD^*(p)$ is isomorphic to a new theory which we denote by $MU^*(p)$, without the subscript $\Dh$, whose elements are given by concrete geometric cycles and relations. 
In \cite{hausquick} we show that this geometric description allows us to define pushforward morphisms in $MU^*(p)$ for every proper holomorphic map between complex manifolds. 
This is a vast extension of the result in \cite{hfc} and is important for the study of Abel--Jacobi type invariants as in \cite{aj}. \\

We now briefly recall the construction of \cite{hfc} and will then outline the new contributions of the present paper. 
Let $\Manc$ denote the category of complex manifolds and holomorphic maps. 
The Grothendieck topology defined by open covers turns $\Manc$ into an essentially small site with enough points. 
For a topological space $Z$, let $\sing(Z)$ denotes its singular simplicial set. 
Let $E$ be a topological rationally even spectrum. 
Let $\sing(E)$ denote the spectrum of simplicial sets whose $n$th simplicial set is given by $\sing(E_n)$. 
For $\Vh_* = E_*\otimes_{\Z} \C$, let $H(\Vh_*)$ denote the spectrum whose $n$th simplicial set is the simplicial Eilenberg--MacLane space $K(\Vh_*,n)$. 
Let $\sing(E) \to H(\Vh_*)$ be a map of spectra which induces the genus $\phi_* \colon E_* \to \Vh_*$ which in degree $2k$ multiplies with $(2\pi i)^k$. 
We consider $\phi$ as a map of constant presheaves of spectra on $\Manc$. 
The canonical inclusion map $\Vh_* \to \Ah^*(X;\Vh_*)$ from constant functions into the complex of smooth forms  induces a map of presheaves of Eilenberg--MacLane spectra 
$H(\Vh_*) \to H(\Ah^*(\Vh_*))$. 
Composition then yields a map 
$\phi \colon \sing(E) \to H(\Ah^*(\Vh_*))$ 
which we also denote by $\phi$. 
For a given integer $p$, let $\phi^p=(2\pi i)^p\cdot \phi$. 
Then $\ED(p)$ is defined by the homotopy cartesian square of presheaves of spectra on $\Manc$ 
\begin{align}\label{eq:MUDp_htpy_pullback_intro}
\xymatrix{
\ED(p)\ar[r]\ar[d] & \sing(E) \ar[d]^{\phi^p}\\
H(F^p\Ah^*(\Vh_*))\ar[r] & H(\Ah^{*}(\Vh_*))
}
\end{align}
where the notation for the presheaves of the complex of filtered forms $F^p\Ah^*(\Vh_*)$ is explained in \eqref{eq:Hodge_filtration} and Definition \ref{def:extend_filtration}. 
Let $\hoSp(\sPre_*)$ denote the homotopy category of presheaves of spectra on $\Manc$. 
Then the $n$-th Hodge filtered $E$-cohomology group with  twist $p$ of a complex manifold $X$ is defined as the group of homotopy classes of maps of presheaves of spectra 
\begin{align*}
\ED^n(p)(X) = \Hom_{\hoSp(\sPre_*)}(\Sigma^{\infty}(X_+), \Sigma^n\ED(p)). 
\end{align*}

We will now summarise the construction of our geometric model $MU^*(p)$ which is inspired by Karoubi's multiplicative K-theory of \cite{karoubi43} and the geometric differential complex cobordism groups of \cite{Bunke2009}.  
The details are given in section \ref{Section:GeometricHFCBordism}.  
For a smooth manifold $X$ we denote by $\Ah^*$ the sheaf on $X$ given by the de Rham complex with complex coefficients. 
Similar to \cite{karoubi43} we develop our theory for objects in the category $\ManF$ which are pairs $(X,F^*)$ where $X$ is a smooth manifold, and $F^*$ a descending filtration of  $\Ah^*$ on $X$ as a chain complex of $\Ah^0$-modules. 
A morphism $f\colon X\to Y$ in $\ManF$ is a smooth map $f\colon X\to Y$ such that, for each $p$, we have $f^*F^p\Ah^*\subset F^p\Ah^*$. 
Whenever $X$ is a complex manifold we consider it as an object in $\ManF$ together with the Hodge filtration on $\Ah^*$. 
Since we are mostly interested in the case of complex manifolds we will refer to $MU^*(p)(X)$ which we will define below as geometric \emph{Hodge filtered} complex cobordism even though $X$ may just be in $\ManF$. 
Our main reason to use the category $\ManF$ is that it makes it easier to work with products of a complex manifold and a smooth manifold. 
We hope, however, that the additional generality may turn out to be useful in future applications as well.  \\

Let $(X,F^*)$ be an object in $\ManF$. 
Recall from \cite{karoubi43} that Karoubi's multiplicative K-theory groups $MK(X)$ are generated by triples $(E,\nabla, \omega)$ where $E\to X$ is a complex vector bundle with connection $\nabla$, and $\omega$ is a sequence of forms such that $ch_{2p}(\nabla)+d\omega_{2p}\in F^p\Ah^{2p}(X)$. 
Here $ch_{2p}(\nabla)$ denotes the $2p$-th Chern--Weil Chern character form of $\nabla$.     
For Hodge filtered complex cobordism, we essentially replace vector bundles with connection by the differential cobordism cycles of \cite{Bunke2009}.

Consider the genus $\phi \colon MU_*\to \Vh_*$ given by multiplication by $(2\pi i)^{n}$ in degree $2n$. By Thom's theorem, $MU_n$ is the bordism group of $n$-dimensional almost complex manifolds. Hirzebruch showed that if $R$ is an integral domain over $\Q$, then any genus $\phi\colon MU_*\to R$ is of the form 
$$
\phi(Z)=\int_Z(K^\phi(TZ))^{-1}
$$
for a multiplicative sequence $K^\phi$, which yield an $R$-valued characteristic class of complex vector bundles. 
Now we set $R=\Vh_* = MU_*\otimes \C$ and  consider the characteristic class $K^p=(2\pi i)^p\cdot K^\phi$. 
If $\nabla$ is a connection on a complex vector bundle $E\to X$, Chern--Weil theory gives a form $K^p(\nabla)$ representing $K^p(E)$. 
Given a proper oriented map $f\colon Z\to X$ and a form $\omega$ on $Z$, we consider the pushforward current $f_*\omega$, which acts 
by $\sigma\mapsto \int_Z\omega\wedge f^*\sigma$.

Now we can describe the group of Hodge filtered cycles $ZMU(p)(X)$. The details are explained in section \ref{Section:GeometricHFCBordism}. 
The elements in $ZMU(p)(X)$ are triples $(f,\nabla,h)$ where $f$ is a proper, complex oriented map $f\colon Z\to X$, $\nabla$ is a connection on the complex stable normal bundle of $f$ and $h$ is a current on $X$ so that $f_*K^p(\nabla)- dh$ is a smooth form in $F^p\Ah^*(X;\Vh_*)$.  
We grade $ZMU(p)(X)$ by the codimension of $f$, so for $(f,\nabla,h)\in ZMU^n(p)(X)$ we have $\dim X-\dim Z=n$.
Whenever $h$ is a form with $dh\in F^p\Ah^n(X;\Vh_*)$, we define $a(h)=(0,0,h)\in ZMU^n(p)(X)$. 
We will quotient out the group of cycles $a\left(F^{p}\Ah^{n-1}\right)$.
Now we define the cobordism relation, which is essentially that of \cite{Bunke2009}.
Let $\widetilde{b}=(b,\nabla)$ be a pair with $b \colon W\to \R\times X$ and $\nabla$ a connection on its stable normal bundle. 
We assume that $b$ is transverse to the inclusion $\iota_t\colon X\to \R\times X$, given by $\iota_t(x)=(t,x)$, for $t=0,1$. 
Let 
$$
\psi(\widetilde{b}) = \int_{[0,1]\times X/X} b_*K^p(\nabla)
$$
and define $BMU_{\geo}^n(p)(X)\subset ZMU^n(p)(X)$ as the subgroup generated by cycles of the form
\[
(f_1,\nabla_1,0)-(f_0,\nabla_0,\psi(\widetilde{b})).
\]

\begin{defn}
For $(X,F^*) \in \ManF$ and integers $n,p$, the geometric Hodge filtered complex cobordism groups are defined by
$$  
MU^n(p)(X) := \frac{ZMU^n(p)(X)}{BMU_{\geo}^n(p)+a\left(F^{p}\Ah^{n-1}(X;\Vh_*)\right)}.
$$
\end{defn}

We now state the main result of this paper: 

\begin{theorem}\label{thm:mainTheoremKappaintro}
Let $X$ be a complex manifold together with the Hodge filtration. 
Then, for all integers $n,p$, there is an isomorphism of Hodge filtered cohomology groups
\begin{align}\label{eq:mainthmiso}
\MUD^n(p)(X) \cong MU^n(p)(X)
\end{align}
which respects pullbacks along holomorphic maps.  
\end{theorem}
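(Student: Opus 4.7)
The plan is to construct an explicit comparison map
$$
\kappa \colon MU^n(p)(X) \longrightarrow \MUD^n(p)(X)
$$
via a refined Pontryagin--Thom construction, and then to show it is an isomorphism by comparing long exact sequences. Given a cycle $(f\colon Z \to X, \nabla, h)$, the Pontryagin--Thom collapse along $f$ determines a class in $MU^n(X)$, represented by an explicit map of presheaves of spectra $\Sigma^{\infty} X_+ \to \Sigma^n \sing(MU)$ factoring through the Thom spectrum of the stable complex normal bundle $\nu_f$. The connection $\nabla$ then lifts this to the de Rham side via Chern--Weil theory: the pushforward form $f_*K^p(\nabla)$ is a concrete representative in $\Ah^n(X;\Vh_*)$ of the image of $[f]$ under $\phi^p$. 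The assumption that $f_*K^p(\nabla) - dh$ lies in $F^p\Ah^n(X;\Vh_*)$, together with the current $h$, provides the explicit homotopy in the square \eqref{eq:MUDp_htpy_pullback_intro} needed to lift to the homotopy pullback $\MUD(p)$.

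For this to give a well-defined map at the level of presheaves of spectra, I would use a sufficiently functorial simplicial model for $\sing(MU)$ built from the Grassmannians $\Gr_m(\C^{m+l})$, together with a compatible family of universal connections on the tautological bundles, so that every $(f,\nabla)$ determines a canonical simplex rather than merely a homotopy class. Well-definedness on classes then reduces to two checks. First, bordism invariance: a cobordism datum $\widetilde{b}=(b,\nabla)$ between $(f_0,\nabla_0)$ and $(f_1,\nabla_1)$ induces a simplicial homotopy between the PT representatives, while the correction form $\psi(\widetilde{b}) = \int_{[0,1]\times X/X} b_*K^p(\nabla)$ provides the matching homotopy on the de Rham side; the compatibility of these two homotopies is precisely encoded in the definition of $BMU_{\geo}^n(p)$. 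Second, the image of $a(h)=(0,0,h)$ for $h \in F^p\Ah^{n-1}(X;\Vh_*)$ must be trivial in $\MUD^n(p)(X)$, which follows because such an $h$ defines a nullhomotopy already on the $F^p$-side of \eqref{eq:MUDp_htpy_pullback_intro}.

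To prove that $\kappa$ is an isomorphism, I would establish a long exact sequence on the geometric side of the form
$$
\cdots \to \Ah^{n-1}(X;\Vh_*)/F^p \xrightarrow{a} MU^n(p)(X) \xrightarrow{(I,R)} MU^n(X)\oplus F^p\Ah^n_{\cl}(X;\Vh_*) \to H^n(X;\Ah^*(\Vh_*)/F^p) \to \cdots
$$
where $I$ forgets $(\nabla,h)$ and $R$ is the curvature $(f,\nabla,h)\mapsto f_*K^p(\nabla)-dh$. The homotopy pullback \eqref{eq:MUDp_htpy_pullback_intro} yields the analogous long exact sequence on the abstract side, and $\kappa$ induces a map between them. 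On the outer terms classical Pontryagin--Thom identifies $MU^n(X)$ with the bordism group of proper complex-oriented maps to $X$, and the form groups appear identically on both sides, so the five lemma concludes. Exactness of the geometric sequence, in particular surjectivity of $(I,R)$ onto the kernel of the next map, will be a consequence of the fact that any bordism class $[f]$ admits a choice of connection $\nabla$ on $\nu_f$ and that any pair whose Chern--Weil form and bordism image agree is realized up to $a$ of a primitive.

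The main obstacle will be the first step: organizing the Pontryagin--Thom construction functorially enough to land strictly inside the presheaf-level homotopy pullback $\MUD(p)$, rather than merely producing a class in $MU^n(X)$ together with an unspecified homotopy. This requires a geometric cycle model for $\sing(MU)$ in which the homotopy between the Chern--Weil representative of $\phi^p[f]$ and the singular representative is produced canonically from $(f,\nabla)$, uniformly across simplicial dimensions and compatibly with pullback along holomorphic maps. Once such a refined Pontryagin--Thom construction is in place, the remaining steps are essentially bookkeeping via the five lemma.
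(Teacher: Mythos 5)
Your high-level strategy (Pontryagin--Thom plus the five lemma applied to a morphism of long exact sequences) matches the paper's, but you propose to construct the comparison map in the opposite direction, and this is where the approach has a genuine gap. You send a geometric cycle $(f,\nabla,h)$ to $\MUD^n(p)(X)$ and claim that ``the current $h$ provides the explicit homotopy'' needed to lift into the homotopy pullback \eqref{eq:MUDp_htpy_pullback_intro}. But the simplicial sets appearing in \eqref{eq:MUDp_htpy_pullback_intro} are built from \emph{smooth forms} $\Ah^*(\Vh_*)$ and $F^p\Ah^*(\Vh_*)$, not currents, and $h\in\Ds^{n-1}(X;\Vh_*)$ is in general a genuine current (it must be, since $f_*K^p(\nabla)$ is a current supported on $f(Z)$, not a form, and $dh$ must cancel its singular part). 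There is no canonical way to replace $h$ by a form-valued path from a representative of $\phi^p([f])$ to $\omega$. Worse, even the representative of $\phi^p([f])$ that you want to compare against is ambiguous: the Chern--Weil current $f_*K^p(\nabla)$ and any form-level cocycle produced functorially from a map $X\to QMU$ differ by a correction that itself is only a current, and quantifying this difference is a nontrivial part of the problem. You flag ``organizing the Pontryagin--Thom construction functorially'' as the obstacle, but the deeper issue is this current-versus-form mismatch, which you do not address.

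The paper resolves exactly this by going in the other direction: from an intermediate homotopy-theoretic model $\MUhs(p)$ built using Mathai--Quillen Thom forms on the tautological bundles $\gamma_{m,l}$ (these give a \emph{form}-valued, strictly functorial fundamental cocycle $\phi_{\sm}$ refining $\phi^p$) to the geometric cycle model $MU^n(p)(X)$. Passing from forms to currents is trivial, and the delicate difference between the Mathai--Quillen Thom form $\phi_{m,l}$ and the Thom current $\phi_{\nabla_{m,l}}=(\iota_{m,l})_*K(\nabla_{m,l})$ is absorbed by an explicit correction current $\pbalpha(g)$ (see \eqref{eq:alpha_defining_equation_as_current_on_MU(m,l)} and Definition \ref{def:kappa_on_set_of_transversal_maps}). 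The comparison with the abstract $\MUD(p)$ is done separately by a zig-zag of weak equivalences of presheaves of spectra (Theorems \ref{thm:htpy_pullbacks_MUD_MUhs_are_we} and \ref{thm:map_MUD_to_MUhs_is_an_iso}), which is a softer step. To repair your proposal you would essentially have to reinvent both of these devices: a form-level, presheaf-functorial refinement of $\phi^p$ (the $\MUhs(p)$ model) and the correction term; and even then the natural direction for $\kappa$ is the one the paper takes. Minor additional point: the Mayer--Vietoris sequence you write has $\Ah^{n-1}(X;\Vh_*)/F^p$ and $F^p\Ah^n_{\cl}$ where the terms should be the hypercohomology groups $H^{n-1}(X;\frac{\Ah^*}{F^p}(\Vh_*))$ and $H^n(X;F^p\Ah^*(\Vh_*))$; this is cosmetic but worth fixing before invoking the five lemma.
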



\begin{remark}
The analog of Theorem \ref{thm:mainTheoremKappaintro} for differential cobordism is a consequence of the uniqueness theorem of \cite{Bunke2010} for differential extensions of cohomology theories which satisfy the axioms of \cite{Bunke2010}.   
We propose similar axioms for Hodge filtered extensions in section \ref{sec:axioms}.  
However, the proof of the uniqueness theorem of \cite{Bunke2010} relies on the fact that any continuous map is homotopic to a smooth one. 
The corresponding assertion for holomorphic maps is false. 
We have not succeeded in finding a proof that works also in the holomorphic setting. 
We therefore prove Theorem \ref{thm:mainTheoremKappaintro} by providing an explicit isomorphism.
\end{remark}

The proof of theorem \ref{thm:mainTheoremKappaintro} 
is based on the extent to which the Pontryagin--Thom construction is compatible not just with topological but differential geometric data as well. To check all the required compatibilities requires a detailed analysis of the geometry of the Pontryagin--Thom construction. 
We then construct isomorphism \eqref{eq:mainthmiso} as a composition of two isomorphisms. 
In section \ref{sec:homotopical_model} we construct a more geometric model of Hodge filtered cobordism on $\ManF$. 
We prove in section \ref{sectionComparisonOfHFCTheories} that this model and the one of \cite{hfc} are equivalent. 
Then we define in section \ref{section:from_htpy_to_geom} a map from the new model to the geometric cycle description we described above. 
While at first glance it may seem to be a rather straight forward task to construct a cycle out of the data of homotopy pullback \eqref{eq:MUDp_htpy_pullback_intro} for $E=MU$, we do not expect it to be possible to construct isomorphism \eqref{eq:mainthmiso} in a more direct way.

We will now further describe the key ideas for the construction of isomorphism \eqref{eq:mainthmiso}. 
Consider a pointed continuous map 
\begin{align}\label{eq:intro_gmap_smooth_transverse}
\xymatrix{\Thom(\R^k\times X)=\Sigma^kX_+\ar[r]^-g& MU(m,l)=\Thom(\gamma_{m,l})}
\end{align}
such that $g|_{g^{-1}(\gamma_{m,l})}$ is smooth and transverse to $\Grml$, considered as the image of the $0$-section $\iota_{m,l}$ in $\gamma_{m,l}$. Here $\Thom(E)$ is the Thom space of $E$.  
Then we get a manifold $Z_g=g^{-1}(\Grml)$, and a natural map $f_g\colon Z_g\to X$. 
The map $f_g$ is naturally complex oriented, and the associated complex representative of the stable normal bundle is $N_g=\left(g|_{Z_g}\right)^*\gamma_{m,l}$. 
There are natural compatible connections $\nabla_{m,l}$ on $\gamma_{m,l}$, as considered in \cite{NR1}, 
which we may pull back to get a connection $\nabla_g=\left(g|_{Z_g}\right)^*\nabla_{m,l}$. 
Now we view the currents $\phi_{\nabla_{m,l}}:=\left(\iota_{m,l}\right)_*K^p(\nabla_{m,l})$ as currents on $MU(m,l)$. 
As such, they should correspond to maps of presheaves $MU(m,l)\to H(\Ds^*(\Vh_*))$, where we consider $MU(m,l)$ as representing the presheaf of smooth maps to $MU(m,l)$ and use $\Ds^*$ to denote the complex of currents. 
Then we may replace the map $\phi^p \colon \sing (MU) \to H(\Ah^*(\Vh_*))$ in \eqref{eq:MUDp_htpy_pullback_intro} with a map $\phi_{\nabla}\colon MU \to H(\Ds^*(\Vh_*))$ defined by the formula $\phi_\nabla(g)=\pi_*g^*(\phi_{\nabla_{m,l}})$ where $\pi\colon \R^k\times X\to X$ is the projection. 
This would harmonise well with the geometric groups $MU^n(p)(X)$, since we have 
\begin{equation} \label{fundamentalCurrentEquation}
\left(f_g\right)_*K^p(\nabla_g)=\pi_*\left(g|_{g^{-1}(\gamma_{m,l})}\right)^*\phi_{\nabla_{m,l}}.
\end{equation}
Hence, if we furthermore had a current $h$ such that $\omega=\phi_{\nabla}(g)- dh$ is in $F^p\Ah^n(X;\Vh_*)$, we could simply map $(g,h,\omega)$ to $(f_g,\nabla_g,h)$. 
However, since currents may not be pulled back along arbitrary holomorphic maps, $\Ds^*$ is not a presheaf and $\phi_{\nabla}$ cannot be made into a map of presheaves. 

Therefore, we note that $\left(\iota_{m,l}\right)_*$ induces the Thom isomorphism, and so we may use compatible Thom forms to circumvent the use of currents in the above analysis. 
We chose to use the Mathai--Quillen Thom forms of \cite{mathai-quillen}, which are natural for Hermitian bundles with unitary connections.
Then we can pull back these Thom forms along maps $g$ as in \eqref{eq:intro_gmap_smooth_transverse}. 
We denote the set of such maps by $\Mappsm(\Sigma^k X_+,MU(m,l))$. 
Now we would like to take colimits and to replace the spaces $MU(m,l)$ with the spaces $MU_n$, where $n+k=2m$, and then with   $QMU_n=\colim_{k}\Omega^kMU_{n+k}$. 
We show that the Mathai--Quillen Thom forms behave well with respect to these two stabilisation procedures. 
Finally, we consider the simplicial set  $\mapsm(\DD^{\bullet} \times X, QMU_n)$ of, in an appropriate sense, smooth maps. 
Note that we work with $\DD^{\bullet} \times X$ instead of $X \times \DD^{\bullet}$, since the orientation of the former is compatible with integration over $\DD^{\bullet}$. We refer to 
Remark \ref{rem:order_of_Delta_and_X_integration} for further details. 

In section \ref{sec:new_model_MUhs(p)} we then define a new homotopy theoretic model, denoted $\MUhs(p)$, of Hodge filtered cobordism using the existence of the Mathai--Quillen forms. 
A key observation is that the latter will allow us to define natural maps of simplicial sets 
\begin{align*}
\phi_{\sm}^{n} \colon \mapsm(\Delta^{\bullet} \times X, QMU_n) \to \Ah^n(\Delta^{\bullet} \times X;\Vh_*). 
\end{align*}
The classes in $\MUhs^n(p)(X)$ are represented by triples $(g,\omega,h)$ where $g\colon \Sigma^kX_+\to MU(m,l)$ is smooth, $\omega\in F^p\Ah^n(X;\Vh_*)$, and $h\in \Ah^n(\Delta^1 \times X;\Vh_*)$ restricts to $\omega$ at one end and $\phi^n_{\sm}(g)$ at the other. See Lemma \ref{thm:HFC_concrete_triples} for a precise statement.

We show in section \ref{sec:MUhs_is_a_presheaf_of_spectra} that the new model $\MUhs(p)$ is represented by a presheaf of spectra which fits into a homotopy pullback similar to the one defining $\MUD(p)$. 
The constant presheaf $\sing(MU)$ is replaced with the presheaf of simplicial spectra $X\mapsto \mapsm(\DD^{\bullet} \times X, QMU)$. 
The map $\phi_{\sm}^n$ above induces a map of presheaves of spectra 
$$
\phi_{\sm}\colon \Mapsm(\DD^{\bullet} \times -, QMU)\to \Ah^*_{\hs}(\Vh_*),
$$
where $\Ah^*_{\hs}(\Vh_*)$ is a presheaf of simplicial spectra  over $\Manc$ which is weakly equivalent to the Eilenberg--Maclane spectrum $H(\Ah^*(\Vh_*))$. 
We also define a spectrum $F^p\Ahs^*(\Vh_*)$ which is weakly equivalent to $H(F^p\Ah^*(\Vh_*))$. 
They are related by a natural map  $F^p\Ahs^*(\Vh_*)) \to \Ahs^*(\Vh_*)$ which is induced by the objectwise inclusion of sheaves $F^p\Ah^*(\Vh_*) \xto{\inc} \Ah^*(\Vh_*)$. 
The presheaf of spectra $\MUhs(p)$ is then defined as the homotopy pullback of the diagram 
\[
\Mapsm(\DD^{\bullet} \times -, QMU)\to \Ah^*_{\hs}(\Vh_*) \leftarrow F^p\Ah^*_{\hs}(\Vh_*).
\]
Our definition of the spectra $\Ah^*_{\hs}(\Vh_*)$ and $F^p\Ah^*_{\hs}(\Vh_*)$ follows essentially the work of Hopkins--Singer \cite[Appendix D]{hs}, which is the motivation for the subscript $\hs$. 
In Theorems \ref{thm:htpy_pullbacks_MUD_MUhs_are_we} and \ref{thm:map_MUD_to_MUhs_is_an_iso} 
in section \ref{subsec:comparison_of_htpy_models} we show that there is a weak equivalence of presheaves of spectra $\MUD(p) \simeq \MUhs(p)$ and hence a natural  isomorphism $\MUD^*(p)(X) \cong \MUhs^*(p)(X)$ for every $p \in \Z$.  

Since $\MUhs(p)$ is a much more accessible model, we are then able to define in section \ref{subsec:construction_of_kappa} a natural map
\[
\kappa \colon \MUhs^n(p)(X)\to MU^n(p)(X)
\]
by setting 
$$
\kappa(g,\omega,h) = \left(f_g,\nabla_g,\pbalpha(g) + \int_{\Delta^1 \times X/X} h \right).
$$
Here $\pbalpha(g)$ is a universally defined correction term which is needed since we used a Thom form on $\gamma_{m,l}$ instead of the Thom current $\left(\iota_{m,l}\right)_*K(\nabla_{m,l})$ appearing in \eqref{fundamentalCurrentEquation}. 
The assignment $X \mapsto \MUhs^*(p)(X)$ is actually defined for every $X\in \ManF$. 
In fact, we show in Theorem \ref{thm:kappa_is_an_isomorphism} in section \ref{subsec:kappa_is_an_iso} that $\kappa$ is an isomorphism of Hodge filtered extensions over $\ManF$.  \\

Many of the ideas of the present paper appeared in the doctoral thesis of the first-named author. 
Both authors would like to thank the Department of Mathematical Sciences at NTNU for the continuous support during the work on the thesis and this paper. 
We thank Mike Hopkins for helpful discussions and the anonymous referees for helpful comments and suggestions to improve the exposition of the paper.


\section{Geometric Hodge filtered cobordism} \label{Section:GeometricHFCBordism}

In this section we will define geometric Hodge filtered complex cobordism groups. 
We begin with some recollection and notation.

\subsection{Currents}

Let $X$  be a smooth manifold and let $\Lambda_X$ denote the orientation bundle of $X$. 
Let $\Ah_c^*(X;\Lambda_X)$ be the space of compactly supported smooth forms on $X$ with values in $\Lambda_X$.
Let $\Ds^*(X)$ denote the space of currents on $X$, defined as the topological dual of $\Ah_c^*(X;\Lambda_X)$. 
Given a form $\omega\in\Ah^*(X)$ and a current $T\in \Ds^*(X)$, their product acts by
$$
T\wedge \omega (\sigma)= T(\omega\wedge \sigma).
$$ 
There is an injection $\Ah^*(X)\to \Ds^*(X)$ given by
$$
\omega \mapsto T_\omega =\left(\sigma\mapsto \int_X\omega\wedge \sigma ,\quad \sigma\in \Ah^*_c(X;\Lambda_X)\right).
$$
We grade $\Ds^*$ so that this injection preserves degree. 
That is, $\Ds^k(X)$ consists of the currents which vanish on a homogeneous $\Lambda_X$ valued form $\sigma$, unless possibly if $\deg \sigma = \dim_\R X-k$. 
We will not always distinguish $\omega$ from $T_\omega$ in our notation.

If $X$ is a manifold without boundary, Stokes' theorem implies for $\omega\in \Ah^k(X)$: 
$$
T_{d\omega}(\sigma)=(-1)^{k+1}T_\omega(d\sigma).
$$
Hence we can extend the exterior differential to a map $d\colon \Ds^k(X)\to \Ds^{k+1}(X)$ by 
$$
dT(\sigma)=(-1)^{k+1}T(d\sigma). 
$$
We define for a vector space $V$, $\Ds^*(X;V)=\Ds^*(X)\otimes V$, and for a graded vector space $\Vh_*$ we set 
$$
\Ds^n(X;\Vh_*)=\bigoplus_j \Ds^{n+2j}(X;\Vh_{2j}).
$$
An orientation of a map $f\colon Z\to X$ is equivalent to an isomorphism $\Lambda_Z\simeq f^*\Lambda_X$. 
If $f$ is proper and oriented, we therefore get a map 
\[
f^*\colon \Ah_c^*(X;\Lambda_X)\to \Ah_c^*(Z;\Lambda_Z)
\] 
which induces a map 
\[
f_*\colon\Ds^*(Z)\to \Ds^{*+ k}(X)
\]
where $k=\codim f=\dim X-\dim Z$.
We also denote by $f_*$ the homomorphism $\Ds^*(Z;\Vh_*)\to \Ds^*(X;\Vh_*)$ induced by tensoring $f_*$ with the identity of the various $\Vh_{2j}$. 
We get the equality 
$$
d\circ f_*=(-1)^kf_*\circ d.
$$

\begin{remark}\label{IntegrationOverTheFiber}
In the case of a submersion $\pi\colon W\to X$ the pushforward $\pi_*$ takes forms to forms. 
We thus obtain the \emph{integration over the fiber} map
$$
\int_{W/X}\colon \Ah^*(W)\to \Ah^{*+d}(X)
$$ 
defined by the equation
$$
T_{\int_{W/X}\omega} = \pi_*T_\omega.
$$
\end{remark}


\subsection{The category \texorpdfstring{$\ManF$}{ManF}}

We recall the definition of $\ManF$ from the introduction. 

\begin{defn}
\label{def:ManF}
We denote by $\ManF$ the category with objects pairs $(X,F^*)$ where $X$ is a smooth manifold, and $F^*$ a descending filtration of the sheaf $\Ah^*$ on $X$ given by the de Rham complex with complex coefficients as a chain complex of $\Ah^0$-modules. 
We will often just write $X\in\ManF$, in which case $F^p\Ah^*$ will refer to the filtration associated to $X$ as an object of $\ManF$. 
A morphism $f\colon X\to Y$ in $\ManF$ is a smooth map $f\colon X\to Y$ such that, for each $p$, we have $f^*F^p\Ah^*\subset F^p\Ah^*$.
\end{defn}

\begin{remark}
Let $\Man$ denote the category of smooth manifolds. 
There is an embedding of categories $\Man\to \ManF$ which endows a manifold with the trivial filtration $\Ah^*=F^0\Ah^*\supset F^1\Ah^*=0$. 
Whenever we consider a smooth manifold $S$ without specifying a filtration we equip $S$ with this trivial filtration. 
\end{remark}

\begin{remark}
There is an embedding of categories $\Manc\to \ManF$ given by considering a complex manifold $X$ together with the Hodge filtration on $\Ah^*$, i.e.,  
whenever $X$ is a complex manifold we consider it as an object in $\ManF$ with the \emph{Hodge filtration on forms} which is defined by 
\begin{align}\label{eq:Hodge_filtration}
F^p\Ah^n(X) = \bigoplus_{i\ge 0}\Ah^{p+i, n-p-i}(X).
\end{align}
This filtration is not associated with a Hodge structure, but the name is justified by the fact that if $X$ is compact K\"ahler, \eqref{eq:Hodge_filtration} induces the Hodge filtration on $H^*(X;\C)$. 
In fact, we are mainly interested in the case of complex manifolds. 
We develop our theory on the category $\ManF$ instead of $\Manc$ because it is convenient to have a theory that naturally handles products $S \times X$ for $S$ merely smooth.
\end{remark}

\begin{defn}
\label{def:Hodge_filtration_on_product}
Given $X_1, X_2\in \ManF$, we equip $X_1\times X_2$ with the filtration
$$
F^p\Ah^*(X_1\times X_2)=\bigoplus_{p_1+p_2=p} \overline{F^{p_1}\Ah^*(X_1)\otimes F^{p_2}\Ah^*(X_2)}
$$
where the over-line denotes the closure in the topological space $\Ah^*(X\times Y)$ equipped with  the $\Ch^\infty$-compact-open topology of \cite{deRham}. 
Thus a form $\omega= f\cdot \omega_X\otimes \omega_Y$, for $f$ a function on $X\times Y$, belongs to $F^{p_1+p_2}\Ah^*(X\times Y)$ whenever $\omega_X\in F^{p_1}\Ah^*(X)$ and $\omega_Y\in F^{p_2}\Ah^*(Y)$, and every element of $\Ah^*(X\times Y)$ is a finite sum of such forms.
\end{defn}

This construction is not the categorical product on $\ManF$. 
It is, however, the categorical product on the full subcategory of $\ManF$ satisfying the following condition:
\begin{equation}\label{productCondition}
    \text{For }\omega_i\in F^{p_i}\Ah^*(X),\ i=1,2,\text{ we have }\omega_1\wedge \omega_2\in F^{p_1+p_2}\Ah^*(X).
\end{equation}

\begin{remark}
Note that condition \eqref{productCondition} is equivalent to requiring $\Delta\colon X \to X\times X$ to be a morphism in $\ManF$. 
\end{remark}
 
\begin{remark}
\label{remark:Hodge_filtration_on_product}
Let $X$ be a complex manifold 
and let $S$ be a smooth manifold. 
Then we can describe the filtration for the product  $S \times X$ as follows. 
Let $(z_i)$ be holomorphic coordinates for $U\subset X$, and let $(s_i)$ be smooth coordinates for $V\subset S$. 
Then a form
$$
\sum_{IJK}f_{IJK} ds_K \wedge dz_I\wedge d\overline{z}_J \in \Ah^*(V\times U)
$$
belongs to $F^p\Ah^*(V\times U)$ exactly if $f_{IJK}=0$ whenever $|I|<p$. 
By slight abuse of terminology, we will often refer to this filtration as the Hodge filtration on $S \times X$. 
\end{remark}

\begin{defn}\label{def:extend_filtration}
Let $\Vh_*$ be an evenly graded $\C$-vector space and $X\in \ManF$. 
We extend the given filtration on $\Ah^*(X)$ to forms with coefficients in $\Vh_*$ by
\begin{align*}
F^p\Ah^n(X;\Vh_*) :=\bigoplus_{j\in \Z} F^{p+j}\Ah^{n+2j}(X;\Vh_{2j}).
\end{align*}
The grading is defined such that a form $\omega \in \Ah^r(X;\Vh_s)$ has degree $r-s$.
\end{defn}


\subsection{Cycle model for \texorpdfstring{$MU(X)$}{MUX}} 

We now recall from \cite{quillen} Quillen's description of the complex cobordism groups of a smooth manifold $X$, denoted by $MU^n(X)$.
Let $f\colon Z \to X$ be a smooth map. We may factorize $f$ as $f=\pi\circ\iota$ for an embedding $\iota\colon Z\to X \times \C^N$ and $\pi$ the projection onto $X$. 
If the codimension of $f$, defined by $\codim f=\dim X-\dim Z$, is even, then a complex orientation of $f$ is represented by a complex structure on the normal bundle of $\iota$. 
If $f$ has odd codimension, a complex orientation of $f$ is a complex orientation of the map $Z\to X \times \R$ given by $z\mapsto (f(z),0)$. 
In either case we obtain a complex vector bundle $N_f$ which represents the stable normal bundle of $f$. 
Two choices of factorization and complex structures, the second one denoted by primes, represent the same complex orientation if there is a commutative diagram
$$
\xymatrix{
Z\ar[r]^-\iota\ar[d]_-{i_0}& X \times \C^N\ar[d]\ar[dr]^-\pi \\
Z\times I \ar[r] & X \times \C^{N''}\ar[r] & X\\
Z\ar[r]_-{\iota'}\ar[u]^-{i_1}& X \times \C^{N'}\ar[ur]_-{\pi'}\ar[u]
}
$$
where the central vertical maps are linear embeddings of complex vector bundles,
and the first central horizontal map is an isotopy between the two maps $Z\to X \times \C^{N''}$ through factorizations of $f$. 
Thus the class of the stable complex normal bundle $[N_f]=[f^*TX]-[TZ]\in K^0(Z)$ depends only on the complex orientation.
If $f_1$ and $f_2$ are composable complex oriented maps, their composition is complex oriented with complex normal bundle satisfying $[N_{f_2\circ f_1}] = [N_{f_1}] + f_1^*[N_{f_2}]$.
Two complex oriented maps $f_1\colon Z_1\to X$ and $f_2\colon Z_2\to X$ are isomorphic if there is a diffeomorphism $\psi\colon Z_1\to Z_2$ so that $f_1=f_2\circ \psi$ as an equality of complex oriented maps.

A complex cobordism cycle, or cycle for short, is an isomorphism class of proper complex oriented maps $f\colon Z\to X$. We let $ZMU^n(X)$ denote the monoid of cycles of codimension $n$ under disjoint union.
A cycle 
$$
b=(c,f)\colon W\to \R\times X\in ZMU^n(\R\times X)
$$ 
is called a bordism datum over $X$ if $0$ and $1$ are regular values of $a\colon W\to \R$. 
In this case we define $W_t=c^{-1}(t)$, and $f_t=f|_{W_t}$ for $t=0,1$, and put 
$$
\partial b = f_1+(-f_0)
$$
where $-f_0$ denotes the cycle obtained from $f_0$ by reversing the complex orientation of its representing complex oriented maps. Then we define $BMU^n(X)$ as the submonoid generated by boundaries $\partial b$ as $b$ range over all bordism data over $X$.
The $n$-th complex cobordism group of $X$ is then defined by 
$$
MU^n(X)=ZMU^n(X)/BMU^n(X).
$$
In fact, 
$MU^n(X)$ is contravariantly functorial in $X$. 
For the pullback operation, let $g\colon Y\to X$ be a smooth map. 
If the cycle $f\colon Z\to X$ is transverse to $g$, then $g^*[f]$ is represented by $f'$ in the following pullback square
$$
\xymatrix{
Z'\ar[r]\ar[d]_{f'} & Z\ar[d]^f\\
Y\ar[r]_g & X.
}
$$
It follows from Thom's transversality theorem that each cobordism class $[f]$ can be represented by a map which is transverse to $g$. 
That the cobordism class of $g^*[f]$ depends only on $[f]$ follows by similarly pulling back bordism data. Hence $g^*\colon MU^n(X)\to MU^n(Y)$ is well-defined.
There is an exterior product 
$$
MU^n(X)\times MU^m(Y)\to MU^{n+m}(X\times Y)
$$
given by $([f],[g])\mapsto [f\times g]=:[f]\times [g]$. Then $MU^*(X)$ is turned into a ring by $[f]\cdot [g] := \Delta^*([f]\times [g])$, for $\Delta\colon X\to X\times X$ the diagonal map $\Delta(x)=(x,x)$.

\subsection{Genera}
We let $MU_*$ be the graded ring with $MU_n=MU^{-n}(\pt)$. A map of rings $MU_*\to R$ for $R$ an integral domain over $\Q$ is called a complex genus. 
We recall from \cite{HBJ} that complex genera may be constructed in the following way. 
For each $i\in \mathbb N$, let $x_i$ be a indeterminate of degree $i$. 
Let $Q\in R[[y]]$ be a formal power series in the variable $y$ of degree $2$. Let $\sigma_i$ denote the $i$-th elementary symmetric function in $x_1,x_2,\dots $. We may then define a sequence of polynomials $K_i^Q$ satisfying 
$$
K^Q(\sigma_1,\sigma_2,\cdots)=1+K^Q_2(\sigma_1)+K^Q_4(\sigma_1,\sigma_2)+\dots = \prod_{i=1}^\infty Q(x_i),
$$
since the right hand side is symmetric in the $x_i$.
Then we get a characteristic class $K^Q$, defined on a complex vector bundle $E\to X$ of dimension $n$ by
$$
K^Q(E):=K^Q(c_1(E),\dots, c_n(E))\in H^*(X;R).
$$
By \cite[section 1.8]{HBJ} all genera are of the form
$$
\phi^Q([X])=\int_XK^Q(N_X)
$$
where $N_X$ denotes the complex vector bundle representing the stable normal bundle of $X$ obtained from the complex orientation of $X\to \pt$. 
We now suppose that $\Vh_*$ is a graded ring and that the power series $Q(y)=1+r_1y+r_2y^2+\cdots $ has total degree $0$. 
This is equivalent to assuming $\phi^Q$ to be a degree-preserving genus. 
Then $K^Q(E)$ has total degree $0$. 
From now on we set $\Vh_*:=MU_*\otimes_{\Z}\C$. 
By \cite[Lemma 3.26]{Bunke2009} $\phi^Q$ extends to a morphism of multiplicative cohomology theories 
$$
\phi^Q\colon MU^n(X)\to H^n(X;\Vh_*)
$$
by 
\begin{equation}\label{DefinitionNaturalTransformationphi}
\phi^Q([f])=f_*K^Q\left(N_f\right).
\end{equation}
Here $H^n(X;\Vh_*)\cong \bigoplus_j H^{n+2j}(X;\Vh_{2j})$, so that in particular $H^{-2j}(\pt;\Vh_*)\simeq \Vh_{2j}$. 

\begin{defn}
\label{def:def_of_phi_and_phip_on_cohomology}
We fix the multiplicative natural transformation  
$$
\phi\colon MU^*(X)\to H^*(X;\Vh_*)
$$
characterized by restricting to multiplication with $(2\pi i)^k$ on $MU_{2k} \to MU_{2k}\otimes \C$. 
Let 
$$
K = 1+K_2(\sigma_1)+K_4(\sigma_1,\sigma_2)+ \cdots
$$ 
be the multiplicative sequence satisfying $\phi([f])=f_*K(N_f)$.
For $p\in \Z$ we set $K^p=(2\pi i)^p \cdot K$ and
\begin{align}
\label{eq:def_of_phip}
\phi^p([f])=f_*K^p(N_f).
\end{align}
\end{defn}

Let $f\colon Z\to X$ be complex oriented, and let $\nabla$ be a connection on $N_f$. By Chern--Weil theory, there is a well-defined form $c(\nabla)\in \Ah^*(Z)$ representing the total Chern class $c(N_f)$. In fact with respect to local coordinates we have 
$$
c(\nabla)=1+2 c_1(\nabla)+ c_2(\nabla)+\dots =\det\left(I-\frac{1}{2\pi i}F^\nabla\right)
$$
where $F^\nabla$ denotes the curvature of $\nabla$. Then
$$
K(\nabla):=K(c_1(\nabla),c_2(\nabla),\dots)\in\Ah^0(Z;\Vh_*)
$$
represents the class $K(N_f)$.


\subsection{Definition of geometric Hodge filtered cobordism groups}

First we recall the definition of geometric cobordism cycles from \cite{Bunke2009}: 

\begin{defn}
A geometric cycle over $X$ is a triple $\geocycle=(f,N,\nabla)$ where $f$ is a proper complex oriented map, with $N$ a complex vector bundle representing the stable normal bundle of $f$, and $\nabla$ a connection on $N$.
We say that $\geocycle$ and $\geocycle'$ are isomorphic if there is an isomorphism $g\colon Z\to Z'$ of complex oriented maps such that, under the induced isomorphism $N \cong g^*N'$, $\nabla$ and $g^*\nabla'$ are identified.
Let 
$$
\geocycles^n(X)
$$ 
denote the abelian group generated by isomorphism classes of geometric cycles over $X$ of codimension $n$ with the relations $\geocycle_1+\geocycle_2 = \geocycle_1\sqcup \geocycle_2$.
\end{defn}

\begin{defn}
\label{def:def_of_phip_current}
Let $K^p$ be as in Definition \ref{def:def_of_phi_and_phip_on_cohomology} for the multiplicative natural transformation $\phi$. 
For a geometric cycle $\geocycle\in \geocycles^n(X)$ we define, using the orientation of $f$ induced by its complex orientation, the current 
\begin{align}
\label{eq:def_of_phip_current}
\phi^p(\geocycle)=f_*K^p(\nabla_f)\in \Ds^n(X;\Vh_*).
\end{align}
\end{defn}

\begin{remark}
Note that $\phi^p(\geocycle)$ is a closed current representing the cohomology class $\phi^p([f])=f_*K^p(N_f)\in H^n(X;\Vh_*)$ defined in \eqref{eq:def_of_phip}. 
By de Rham's theorem \cite[Theorem 14]{deRham} we can find a current $h\in \Ds^{n-1}(X;\Vh_*)$ such that 
\begin{align}
\label{eq:we_can_find_currecnt_h}
\phi^p(\geocycle) - dh =  f_*K^p(\nabla_f) - dh ~ \text{is a form, i.e., lies in} ~ \Ah^n(X;\Vh_*).    
\end{align} 
This observation will be crucial for the definition of Hodge filtered cycles below. 
\end{remark}

\begin{defn}\label{def:ZMUn(p)(X)}
Let $(X,F^*)$ be an object in $\ManF$. 
We define the group of \emph{Hodge filtered cycles} of degree $(n,p)$ on $(X,F^*)$ as the subgroup 
$$
ZMU^{n}(p)(X)\subset \geocycles^n(X)\times \Ds^{n-1}(X;\Vh_*)/d\Ds^{n-2}(X;\Vh_*)
$$ 
consisting of pairs $\hfcycle=(\geocycle, h)$ satisfying
\begin{align*}
\phi^p(\geocycle) - dh \in F^p\Ah^n(X;\Vh_*)
\end{align*}
where $\phi^p(\geocycle)$ is defined by \eqref{eq:def_of_phip_current} in Definition \ref{def:def_of_phip_current}.  

\end{defn}

\begin{remark}\label{phipremark}
To simplify the notation, we will often write $\phi$ instead of $\phi^p$.
We may sometimes write a Hodge filtered cobordism cycle as a triple 
$$
\hfcycle = (\geocycle, \omega, h)\in \geocycles^n(X)\times F^p\Ah^n(X;\Vh_*)\times \Ds^{n-1}(X;\Vh_*)/d\Ds^{n-2}(X;\Vh_*),
$$ 
where $(\geocycle, h)\in ZMU^n(p)(X)$ and $\phi^p(\geocycle)- dh=\omega$.
\end{remark}

We now define maps on the level of cycles as follows:
\begin{alignat}{2}\label{cyclstructmaps}
    \nonumber	& R \colon ZMU^n(p)(X)\to F^p\Ah^n(X;\Vh_*)_{\cl},   & & R(\geocycle ,h) = \phi^p(\geocycle)- dh\\
 & a \colon d^{-1}\left(F^p\Ah^n(X;\Vh_*)\right)^{n-1} \rightarrow ZMU^n(p)(X),\quad \quad  & & a(h) = (0,h)\\
\nonumber & I \colon ZMU^n(p)(X)\rightarrow ZMU^n(X),    &&I(\geocycle ,h)= f
\end{alignat} 
where $d^{-1}\left(F^p\Ah^n(X;\Vh_*)\right)^{n-1}$ denotes the subset of elements in $\Ah^{n-1}(X;\Vh_*)$ which are sent to the subgroup  $F^p\Ah^n(X;\Vh_*)$ under $d \colon \Ah^{n-1}(X;\Vh_*) \to \Ah^{n}(X;\Vh_*)$. 


We will now introduce the cobordism relation.

\begin{defn}\label{def:GeometricBordismDatum} 
The group of geometric bordism data over $X$ is the subgroup of $\geocycles^n(\R\times X),$ with underlying maps $b=(c_b,f)\colon W\to \R\times X$ such that $0$ and $1$ are regular values for $c$.
Then $W_t=c_b^{-1}(t)$ is a closed manifold for $t=0,1$, and 
$f_t=f|_{W_t}$ is a geometric cycle. We define
\[
\partial\widetilde b := \geocycle_1-\geocycle_0 \in \geocycles^n(X)
\] 
and, setting $W_{[0,1]}=c_b^{-1}([0,1])$, we define
\begin{align*}
\psi^p(\widetilde b) &=
(-1)^{n}\left(f|_{W_{[0,1]}}\right)_* \left(K^p(\nabla_b)\right).
\end{align*}
\end{defn}

\begin{remark}
We will often write $\psi$ instead of $\psi^p$ to simplify the notation.
\end{remark}

\begin{prop} \label{geobordismequation}
For $\geocob$ a geometric bordism datum over $X$, we have
$$
\phi^p(\partial \geocob) - d\psi^p(\geocob) =0. 
$$
\end{prop}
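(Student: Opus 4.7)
The plan is to reduce the identity to Stokes' theorem on the manifold with boundary $W_{[0,1]}$, using that the Chern--Weil form $K^p(\nabla_b)$ is closed. Writing $\partial W_{[0,1]} = W_1 \sqcup (-W_0)$ as oriented manifolds (the outward normal at $t=1$ points in the $+\R$ direction and at $t=0$ in the $-\R$ direction), the complex orientations on $W_0,W_1$ induced by declaring them transverse preimages of regular values of $c_b$ match this convention. So for any compactly supported test form $\sigma \in \Ah^*_c(X;\Lambda_X \otimes \Vh_*)$, Stokes' theorem applied to $K^p(\nabla_b)\wedge f_{[0,1]}^*\sigma$ on $W_{[0,1]}$ yields
\begin{equation*}
\int_{W_{[0,1]}} d\bigl(K^p(\nabla_b)\wedge f_{[0,1]}^*\sigma\bigr) \;=\; \int_{W_1} K^p(\nabla_1)\wedge f_1^*\sigma \;-\; \int_{W_0} K^p(\nabla_0)\wedge f_0^*\sigma,
\end{equation*}
where $f_{[0,1]} := f|_{W_{[0,1]}}$ and $\nabla_t := \nabla_b|_{W_t}$.

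Next I would verify that the integrand on the left simplifies to $K^p(\nabla_b)\wedge f_{[0,1]}^* d\sigma$. This uses two facts: Chern--Weil implies $dK^p(\nabla_b) = 0$, and each homogeneous piece $K^p(\nabla_b)_{(j)} \in \Ah^{2j}(W_{[0,1]};\Vh_{2j})$ has \emph{even} form-degree $2j$, so the Koszul sign in the Leibniz rule equals $(-1)^{2j}=1$. Hence the left-hand side equals $(f_{[0,1]})_*K^p(\nabla_b)(d\sigma)$ by definition of pushforward of currents.

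Now I translate the right-hand side into currents on $X$. By the definition of $(f_t)_*$ as the transpose of $f_t^*$, the two boundary integrals are exactly $(f_1)_*K^p(\nabla_1)(\sigma) - (f_0)_*K^p(\nabla_0)(\sigma) = \phi^p(\widetilde f_1)(\sigma) - \phi^p(\widetilde f_0)(\sigma) = \phi^p(\partial\widetilde b)(\sigma)$. It remains to identify $(f_{[0,1]})_*K^p(\nabla_b)(d\sigma)$ with $d\psi^p(\widetilde b)(\sigma)$. Each $\Vh_{2j}$-component of $\psi^p(\widetilde b)$ lies in $\Ds^{n-1+2j}(X;\Vh_{2j})$, so by the convention $dT(\sigma) = (-1)^{k+1}T(d\sigma)$ for $T$ of degree $k$ we pick up the sign $(-1)^{(n-1+2j)+1} = (-1)^n$, and combined with the explicit factor $(-1)^n$ in the definition of $\psi^p$ this gives precisely
\begin{equation*}
d\psi^p(\widetilde b)(\sigma) \;=\; (-1)^n \cdot (-1)^n \, (f_{[0,1]})_*K^p(\nabla_b)(d\sigma) \;=\; (f_{[0,1]})_*K^p(\nabla_b)(d\sigma).
\end{equation*}
Comparing with the Stokes identity yields $\phi^p(\partial\widetilde b)(\sigma) = d\psi^p(\widetilde b)(\sigma)$ for every test form $\sigma$, which proves the proposition.

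The main thing to watch out for is bookkeeping of signs. The two potential sources of error are the boundary orientation on $W_{[0,1]}$ (which determines the sign in $\partial\widetilde b = \widetilde f_1 - \widetilde f_0$) and the grading shift of pushed-forward currents relative to the sign convention defining $d$ on currents. I expect both to conspire in the right way precisely because the definition $\psi^p(\widetilde b) = (-1)^n (f_{[0,1]})_*K^p(\nabla_b)$ was crafted for exactly this purpose, and the evenness of each $K^p(\nabla_b)_{(j)}$ eliminates any $(-1)^j$-dependence so that all components behave uniformly.
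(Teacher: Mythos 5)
Your proof is correct and follows essentially the same route as the paper's: apply Stokes' theorem on $W_{[0,1]}$ to the closed even-degree form $K^p(\nabla_b)$, decompose the boundary term as $W_1 - W_0$ with the induced orientations, and then reconcile the explicit $(-1)^n$ in the definition of $\psi^p$ with the sign in the convention $dT(\sigma)=(-1)^{\deg T +1}T(d\sigma)$. Your sign bookkeeping (each $\Vh_{2j}$-component of $\psi^p(\widetilde b)$ lies in $\Ds^{n-1+2j}$, giving a uniform factor of $(-1)^n$) is slightly more explicit than the paper's but otherwise identical in substance.
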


\begin{proof}
Let $\sigma\in \Ah_c^*(X;\Vh_*)$. We use for $t=0,1$ the notation of the following diagram, where the square is cartesian, and where we view $j_t$ with the pullback orientation:
$$
\xymatrix{
W_{t}\ar[r]^{j_t} \ar[d]_{f_t} & \ar@/^2pc/[dd]^{f} W\ar[d]_-{(c,f)} \\
\{t\}\times X \ar[r]_{i_t}\ar[dr]_{\id} & \mathbb R\times X\ar[d]\\
& X
}
$$
Let $W_{[0,1]}=c^{-1}([0,1]).$ Since $K^p(\nabla_b)$ is closed and of even degree, we have $d(K^p(\nabla_b)\wedge f^*\sigma)=K(\nabla_b)\wedge df^*\sigma,$
and so by Stokes theorem, 
\begin{align*}
\left(f|_{W_{[0,1]}}\right)_*{K^p(\nabla_b)}(d\sigma)&= \int_{W_{[0,1]}} K(\nabla_b)\wedge df^*\sigma \\
&=\int_{\partial W_{[0,1]}} K(\nabla_b)\wedge f^*\sigma \\
&=\left(f|_{\partial W_{[0,1]}}\right)_*\left(\delta_{\partial W_{[0,1]}}\wedge K^p(\nabla_b)\right)(\sigma),
\end{align*} 
where $\delta_{\partial W_{[0,1]}}$ is the integration current of $\partial W_{[0,1]}$ with the boundary orientation.
We observe that $\delta_{\partial W_{[0,1]}} = (j_1)_*1-(j_0)_*1$, since the pullback orientation coincides with the boundary orientation at $1$, but not at $0$. 
We have
\begin{align*}
        (f|_{W_{[0,1]}})_*(\delta_{\partial W_{[0,1]}}\wedge K^p(\nabla_b))&= (f|_{W_{[0,1]}})_*((j_1)_*j_1^*K(\nabla_b)-(j_0)_*j_0^*K^p(\nabla_b))\\
        &= (f_1)_*K^p(\nabla_{f_1})-(f_0)_*K^p(\nabla_{f_0})\\
        &=\phi^p(\geocycle_1)-\phi^p(\geocycle_0).
\end{align*}
Since $T(d\sigma)=(-1)^{\deg T+1}dT(\sigma)$, for homogeneous currents $T$, by definition of the exterior derivative on currents on $X$, this finishes the proof.
\end{proof}

In light of Proposition \ref{geobordismequation}, we consider $(\partial \geocob,\psi^p(\geocob))$ as a Hodge filtered cycle of degree $(\codim b,p)$. We call such cycles \emph{nullbordant} and let $BMU_{\geo}^n(p)(X)\subset ZMU^n(p)(X)$ denote the subgroup generated by the nullbordant cycles. 

We define
\begin{align*}
\widetilde F^p\Ah^{n-1}(X;\Vh_*):= F^p\Ah^{n-1}(X;\Vh_*) + d\Ah^{n-2}(X;\Vh_*).    
\end{align*}

Then we define the group of Hodge filtered cobordism relations by
\begin{align}\label{eq:def_of_geometric_bordism_data}
BMU^n(p)(X)=BMU^n_{\geo}(p)(X) + a\left(\widetilde F^p\Ah^{n-1}(X;\Vh_*)\right)
\end{align}
where $a$ is the map defined in \eqref{cyclstructmaps} above.

Now we are ready to define geometric Hodge filtered cobordism: 

\begin{defn}\label{def:geometric_HFC_bordism_MUn(p)(X)}
Let $(X,F^*)\in\ManF$ and let $n$ and $p$ be integers. 
The geometric Hodge filtered cobordism group of $X$ of degree $(n,p)$ is defined as the quotient 
\[
MU^{n}(p)(X):=\frac{ZMU^{n}(p)(X)}{BMU^n(p)(X)}.
\]
We denote the Hodge filtered cobordism class of $\gamma=(\geocycle,h)\in ZMU^n(p)(X)$
by $[\gamma]=[\geocycle,h]$.
\end{defn}


\subsection{The long exact sequence}

The maps defined on the level of cycles in \eqref{cyclstructmaps} induce maps on the level of cohomology groups: 

\begin{alignat}{2}\label{structuremapsGeoHFCBordism}
\nonumber	& R \colon MU^n(p)(X)\to H^n(X;F^p\Ah^*(\mathcal V_*)),   & & R[\geocycle, \omega, h]=[\omega]\\
 & a \colon  H^{n-1}\left(X;\frac{\Ah^*}{F^p}(\Vh_*)\right) \rightarrow MU^n(p)(X),\quad \quad  &&a(h) = [0,dh,h]\\
\nonumber & I \colon ZMU^n(p)(X)\rightarrow MU^n(X),    & & I[\geocycle ,h,\omega]= [f].
\end{alignat} 

\begin{prop}\label{prop:RaI_are_well_defined}
The maps $R$, $a$ and $I$ in  \eqref{structuremapsGeoHFCBordism} are well-defined.
\end{prop}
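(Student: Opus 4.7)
The plan is to check, for each of the three maps, that it descends from the cycle-level definition in \eqref{cyclstructmaps} to the quotient $MU^n(p)(X) = ZMU^n(p)(X)/BMU^n(p)(X)$. Since
\[
BMU^n(p)(X) = BMU^n_{\geo}(p)(X) + a\bigl(\widetilde F^p\Ah^{n-1}(X;\Vh_*)\bigr),
\]
it suffices in each case to verify vanishing on the two types of generators: nullbordant cycles $\bigl(\partial\geocob,\psi^p(\geocob)\bigr)$ and cycles of the form $(0,h')$ with $h' \in \widetilde F^p\Ah^{n-1}(X;\Vh_*)$.

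For $R$, first note that on the cycle $(\geocycle,h)$ the form $\omega = \phi^p(\geocycle) - dh$ lies in $F^p\Ah^n(X;\Vh_*)$ by the very definition of $ZMU^n(p)(X)$, and is closed since $\phi^p(\geocycle) = f_* K^p(\nabla_f)$ is closed (push-forward of a closed Chern--Weil form) and $d^2 h = 0$. Hence $[\omega]$ defines a class in $H^n(X;F^p\Ah^*(\Vh_*))$. On a nullbordant generator, Proposition \ref{geobordismequation} already gives $\omega = \phi^p(\partial\geocob) - d\psi^p(\geocob) = 0$. On a generator $(0,h')$ with $h' = h'_1 + dh'_2$, $h'_1 \in F^p\Ah^{n-1}(X;\Vh_*)$, $h'_2 \in \Ah^{n-2}(X;\Vh_*)$, one has $\omega = -dh' = -dh'_1$, which is exact in the complex $F^p\Ah^*(X;\Vh_*)$ and hence represents $0$ in $H^n(X;F^p\Ah^*(\Vh_*))$.

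For $I$, on the nullbordant generator $(\partial\geocob,\psi^p(\geocob))$ with underlying bordism datum $b\colon W\to \R\times X$, the difference $[f_1]-[f_0]$ is by construction trivial in Quillen's group $MU^n(X)$, because $b|_{c_b^{-1}([0,1])}$ is an underlying cobordism datum in the sense recalled before \eqref{eq:def_of_geometric_bordism_data}. On a generator $(0,h')$ the underlying cycle is already zero, so $I$ vanishes trivially.

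For $a$, the argument is symmetric: a class in $H^{n-1}(X;\Ah^*/F^p(\Vh_*))$ is represented by $h \in \Ah^{n-1}(X;\Vh_*)$ with $dh \in F^p\Ah^n(X;\Vh_*)$, and this condition is precisely what makes $(0,h)$ lie in $ZMU^n(p)(X)$. If $h$ is replaced by $h + h'$ with $h' \in \widetilde F^p\Ah^{n-1}(X;\Vh_*)$, the difference $a(h+h')-a(h) = (0,h')$ lies in $a\bigl(\widetilde F^p\Ah^{n-1}(X;\Vh_*)\bigr) \subset BMU^n(p)(X)$, so the induced map on cohomology is well-defined. The main bookkeeping obstacle is really just keeping careful track of the three equivalent descriptions (pairs, triples, signs of $\omega = \phi^p(\geocycle)-dh$) together with the compatibility of filtrations under $d$; once this is in place, each vanishing check reduces to either Proposition \ref{geobordismequation}, Thom's geometric bordism relation, or the tautology that the cokernel of $d\colon \Ah^{n-2}/F^p\Ah^{n-2} \to \Ah^{n-1}/F^p\Ah^{n-1}$ is exactly $\widetilde F^p\Ah^{n-1}(X;\Vh_*)$.
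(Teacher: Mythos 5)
Your proposal is correct and follows essentially the same route as the paper: decompose $BMU^n(p)(X)$ into the nullbordant generators and the generators $a(h')$ with $h'\in\widetilde F^p\Ah^{n-1}(X;\Vh_*)$, then check vanishing of $R$ and $I$ on each (via Proposition~\ref{geobordismequation} for $R$ and the underlying cobordism datum for $I$), and identify $\widetilde F^p\Ah^{n-1}(X;\Vh_*)$ with the relations defining $H^{n-1}\bigl(X;\tfrac{\Ah^*}{F^p}(\Vh_*)\bigr)$ to see that $a$ descends. One small terminological slip in the final sentence: $\widetilde F^p\Ah^{n-1}/F^p\Ah^{n-1}$ is the \emph{image} (not the cokernel) of $d\colon \Ah^{n-2}/F^p\Ah^{n-2}\to\Ah^{n-1}/F^p\Ah^{n-1}$; the substance of the argument is unaffected.
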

\begin{proof}
We first show that $I$ and $R$ vanish on $BMU^n(p)(X)$, as defined in \eqref{eq:def_of_geometric_bordism_data}. 
For $\hfcycle=(\partial\geocob, \psi(\geocob))\in BMU^n_{\geo}(X),$ we have
$$
I(\hfcycle)=\partial b \in BMU^n(X),\quad \text{ and } \quad R(\hfcycle)=0,
$$
where the second equality is Lemma \ref{geobordismequation}, and $b$ is the bordism datum underlying $\geocob$. We have $I\circ a=0$, so in particular 
$$
I(a(h))=0,\quad h\in \widetilde F^p\Ah^{n-1}(X;\Vh_*),
$$
which finishes the proof that $I$ is well-defined. We have
$$
R\circ a\left(\widetilde F^p\Ah^{n-1}(X;\Vh_*)\right)=d\left(F^p\Ah^{n-1}(X;\Vh_*)\right)
$$ 
which is the group of relations for 
$$
H^n(X;F^p\Ah^*(\mathcal V_*))\simeq \frac{F^p\Ah^n(X;\Vh_*)_{\cl}}{dF^p\Ah^{n-1}(X;\Vh_*)}
$$ 
so $R$ is well-defined too. 
That $a$ is well-defined follows from the isomorphism  
\begin{align*}
H^n\left(X;\frac{\Ah^*}{F^p}(\Vh_*)\right) \cong \frac{d^{-1}(F^p\mathcal A^{n+1}(X;\Vh_*))^{n}}{\widetilde F^p\mathcal A^{n}(X;\Vh_*)}
\end{align*} 
and the definition of $BMU^n(p)(X)$.
\end{proof}

\begin{remark}\label{rem:diagram_for_axioms}
It is clear that $R\circ a=d$, and by construction we have 
$$
[R(\hfcycle)] = \phi^p(I(\hfcycle)) \quad \text{in}\ H^n(X;\Vh_*).
$$
Hence the diagram
\[
\xymatrix{
MU^n(p)(X)\ar[r]^I\ar[d]_{R} & MU^n(X)\ar[d]^{\phi^p}\\
H^n(X;F^p\Ah^*(\Vh_*))\ar[r]^-{\mathrm{inc}_*} & H^n(X;\Vh_*).
}
\]
commutes, where $\mathrm{inc}_*$ is the map induced by the inclusion of complexes of sheaves $F^p\Ah^*(\Vh_*)\xto{\mathrm{inc}} \mathcal A^*(\Vh_*)$. 
\end{remark}


Let $\overline{\phi}$ denote the composition of $\phi$ with the reduction modulo $F^p$ map:
\[
\overline{\phi} =\left( MU^n(X)\xto{\phi} H^n(X;\Ah^*(\Vh_*)) \to H^{n}\left(X;\frac{\Ah^*}{F^p}(\Vh_*)\right)\right).
\]

\begin{theorem}
\label{longexactseq}
There is a long exact sequence:
\begin{align*}
    \xymatrix @R=0pc{
    \cdots\ar[r] & H^{n-1}\left(X;\frac{\Ah^*}{F^p}(\Vh_*)\right) \ar[r]^-a & MU^n(p)(X)\ar[r]^-I& \\
    MU^n(X)\ar[r]^-{\overline{\phi}}& H^{n}\left(X;\frac{\Ah^*}{F^p}(\Vh_*)\right) \ar[r]^-a & MU^{n+1}(p)(X)\ar[r]&\cdots
    }
\end{align*}
\end{theorem}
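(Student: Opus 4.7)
The plan is to verify exactness at each of the three positions by working directly with geometric cycles and cobordism relations. The ``image contained in kernel'' direction is largely formal at each position: $I\circ a = 0$ follows from $a(h) = (0,h)$, while $\overline{\phi}\circ I = 0$ holds because, for a Hodge filtered cycle $\widetilde\gamma = (\gamma, \omega, h)$, the identity $\phi^p([f]) = [\phi^p(\gamma)] = [\omega + dh] = [\omega]$ in $H^n(X;\Vh_*)$ shows that $\overline\phi([f])$ vanishes modulo $F^p$ by the choice of $\omega$.

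For $\ker I \subseteq \mathrm{im}\, a$, given $[\gamma, h]$ with $[f] = 0$ in $MU^n(X)$, I would lift a topological bordism $b$ witnessing $f = \partial b$ to a geometric bordism $\widetilde b$ by choosing a connection on $N_b$. Subtracting the nullbordant class $(\partial\widetilde b, \psi^p(\widetilde b))$ reduces the representative to one whose underlying map is trivial in $MU^n(X)$, but possibly with a connection discrepancy on the normal bundle. I would annihilate this discrepancy by a second, ``transgression'' bordism $\widetilde b'\colon \R\times Z\to \R\times X$, $(t,z)\mapsto (t,f(z))$, carrying a connection on $\pi^*N_f$ interpolating between $\nabla_f$ and the leftover boundary connection from $\widetilde b$. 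After both subtractions the representative takes the form $(0, h')$, i.e.\ $a([h'])$, and the condition $dh'\in F^p\Ah^n$ is ensured by Proposition~\ref{geobordismequation}. The inclusion $\ker\overline\phi\subseteq \mathrm{im}\, I$ is supplied by the long exact sequence of $0\to F^p\Ah^*\to \Ah^*\to \Ah^*/F^p\to 0$: it yields a closed form $\omega\in F^p\Ah^n(X;\Vh_*)$ representing $\phi([f])$, and choosing any geometric lift $\gamma_f$ and solving $\phi^p(\gamma_f) - dh = \omega$ for a current $h$ via de Rham's theorem produces $[\gamma_f, h] \in MU^n(p)(X)$ with $I[\gamma_f, h] = [f]$.

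Exactness at $H^n(X;\Ah^*/F^p(\Vh_*))$ is the most delicate. For $a\circ\overline\phi = 0$, the key construction is the bordism datum $b = (\tfrac{1}{2}, f)\colon Z\to \R\times X$ equipped with the connection induced by $\nabla_f$ on its stable complex normal bundle: since $c_b$ is constant, both $W_0$ and $W_1$ are empty, so $\partial\widetilde b = 0$, while $W_{[0,1]} = Z$ together with multiplicativity of $K$ forces $\psi^p(\widetilde b) = (-1)^{n+1}\phi^p(\gamma_f) \equiv (-1)^{n+1}\omega$ modulo exact currents. This exhibits $(0, (-1)^{n+1}\omega)$ as a geometrically nullbordant cycle, so $a([\omega]) = 0$. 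For $\ker a\subseteq \mathrm{im}\,\overline\phi$, if $a([\alpha]) = 0$ one writes $(0, T_\alpha) = (\partial\widetilde b, \psi^p(\widetilde b)) + a(\eta)$ with $\partial\widetilde b = 0$ and $\eta\in \widetilde F^p\Ah^n$. The condition $\gamma_0 = \gamma_1$ permits collar-gluing of $W_{[0,1]}$ along the identification $W_0\cong W_1$ into a closed manifold $\widehat W$ with a proper complex oriented map $\widehat f\colon \widehat W\to X$ of codimension $n$ and a compatible connection $\widehat\nabla$. Stokes' theorem then identifies $\widehat f_*K^p(\widehat\nabla)$ with $\pm\psi^p(\widetilde b)$ as classes in $H^n(X;\Vh_*)$, which gives $[\alpha] = [\alpha-\eta] = \pm\overline\phi([\widehat f])$ in $H^n(X;\Ah^*/F^p(\Vh_*))$.

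The principal obstacle I anticipate is the collar-gluing step: one must verify that $\gamma_0 = \gamma_1$ in $\geocycles^{n+1}(X)$ suffices to produce a smooth closed manifold $\widehat W$ carrying a stable complex orientation and a smoothly extending connection, all compatible with the paper's conventions for proper complex oriented maps. Systematic sign tracking across the various pushforward and boundary formulas will also require care but is essentially mechanical.
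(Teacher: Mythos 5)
Your proposal follows essentially the same route as the paper's proof, verifying exactness position by position with the same key constructions: lifting a topological bordism to a geometric one for exactness at $MU^n(p)(X)$, using the long exact sequence of $0\to F^p\Ah^*\to\Ah^*\to\Ah^*/F^p\to 0$ together with de Rham's theorem at $MU^n(X)$, and the ``constant $\tfrac{1}{2}$'' bordism $(\tfrac{1}{2},f)\colon Z\to\R\times X$ at $H^n\bigl(X;\tfrac{\Ah^*}{F^p}(\Vh_*)\bigr)$.

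There are two small points of divergence worth noting. First, for $\ker I\subseteq\mathrm{im}\,a$ your second ``transgression'' bordism is unnecessary: the paper simply extends the connection on $N_b$ over the bordism so that it restricts to $\nabla_f$ on $W_1$, i.e.\ so that $\partial\widetilde b=\widetilde f$ on the nose; your two-step version is correct but longer. Second, and more interestingly, the collar-gluing you flag as your ``principal obstacle'' for $\ker a\subseteq\mathrm{im}\,\overline\phi$ is not an extra worry of your own making — it is precisely the point where the paper's proof is terse to the point of imprecision. The paper's claim that $\partial\widetilde b=0$ makes $f_b|_{c_b^{-1}([0,1])}$ a cycle overlooks that $W_{[0,1]}=c_b^{-1}([0,1])$ generically has nonempty boundary $W_0\sqcup W_1$; the condition $\partial\widetilde b=0$ only gives an isomorphism of geometric cycles $\widetilde f_0\cong\widetilde f_1$, which permits (but requires) the gluing you describe, together with a cylindrical adjustment of the connection near the ends so that $\widehat f_*K^p(\widehat\nabla)$ and $(f_b|_{W_{[0,1]}})_*K^p(\nabla_b)$ agree up to an exact current. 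Making this explicit, as you do, genuinely tightens the argument. On signs: your $(-1)^{n+1}$ for $\psi^p$ of the constant bordism is the one dictated by Definition~\ref{def:GeometricBordismDatum} applied to a datum of codimension $n+1$; the paper's printed $(-1)^n$ at that point is a harmless slip, since the sign does not affect membership in $BMU^{n+1}(p)(X)$ or in $\mathrm{im}\,\overline\phi$.
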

\begin{proof}
We start with exactness at $MU^n(p)(X)$. First we observe 
$$
I(a([h]))=I([0,dh,h])=0.
$$ 
The converse requires more work. 
We work at the cycle level, so let $\hfcycle=(\geocycle,\omega,h)\in ZMU^n(p)(X)$ and suppose $I(\gamma)=0$. 
That means $f=\partial b$ for some bordism datum $b$. 
We may extend the geometric structure of $\geocycle$ over $b$ and obtain a geometric bordism datum $\widetilde b$ such that $\partial \widetilde b=\geocycle$. We have 
$$
(\geocycle, \omega,h)-(\partial\widetilde b, 0,\psi(\widetilde b)) = (0, \omega,h')=a(h').
$$
The last equality follows from the observation that, since $(0,\omega,h')\in ZMU^n(p)(X)$ is a Hodge filtered cycle, we must have
$$
dh'=\omega\in F^p\Ah^n(X;\Vh_*).
$$
Hence we know $\gamma \in BMU^n(p)(X)$.
Next we show exactness at $MU^n(X)$. 
The vanishing $\overline{\phi}\circ I = 0$ follows from the following commutative diagram, where the bottom row is exact:
$$
\xymatrix{
 MU^n(p)(X)\ar[d]_R \ar[r]^-I & MU^n(X)\ar[d]_-\phi\ar[dr]^-{\overline{\phi}} & \\
H^{n}(X;F^p\Ah^*(\Vh_*))\ar[r]^-{\inc_*} & H^n(X;\Vh_*)\ar[r] & H^{n}\left(X;\frac{\Ah}{F^p}(\Vh_*)\right).
}
$$
Conversely, suppose $\overline\phi([f])=0$. Then we can find $\omega\in F^p\Ah^n(X;\Vh)$ such that 
$$
\phi([f]) = \inc_*([\omega]).
$$
Let $\nabla_f$ be a connection on $N_f$ so that we get a geometric cycle $\geocycle$ with $I(\geocycle)=f$. 
Then $\phi(\geocycle)$ is a current representing $\phi([f])$. 
Hence $\phi(\geocycle)$ and $\omega$ are cohomologous, i.e., there is a current $h\in \Ds^{n-1}(X;\Vh_*)$ such that $\phi(\geocycle)- dh=\omega$. 
Then $\hfcycle:=(\widetilde f, \omega,h)$ is a Hodge filtered cycle with $I(\hfcycle)=f$.

Now we show exactness at $H^{n}\left(X;\frac{\Ah}{F^p}(\Vh_*)\right)$. 
Let $f\colon Z\to X$ be a bordism cycle on $X$. We will show $a(\overline\phi([f])) = 0\in MU^{n+1}(p)(X)$.  
Lifting $f$ to a geometric cycle $\geocycle\in\geocycles^n(X)$ we can write
$$
a(\overline\phi([f])=[0,0,\phi(\geocycle)].
$$
We may build from $\geocycle$ a geometric bordism datum $\geocob$ with underlying map
$$
\xymatrix{Z\ar[r]^-{(\frac{1}{2}, f)}&\mathbb R\times X}
$$
where $\frac{1}{2}$ denotes the constant map with value $\frac{1}{2}$.  
Clearly $\partial\geocob =0$. More interesting is the observation that $\psi(\geocob)=(-1)^{n}\phi(\geocycle)$. Hence
$$
(\partial \geocob,0, \psi(\geocob)) = (0,0,(-1)^{n} \phi(\geocycle)) \in BMU^n_{\geo}(X)
$$
and we conclude that $a(\overline\phi([f]))=0$.

Conversely, suppose that $h\in (d^{-1}F^p\Ah^n(X;\Vh_*))^{n-1}$ is such that $a(h)=(0,dh,h)$ represents $0$ in $MU^n(p)(X)$. 
Then there must be a geometric bordism datum $\geocob$ with underlying map $(c_b,f_b)\colon W\to \mathbb R\times X$, and a form $h'\in \widetilde F^p\Ah^{n-1}(X;\Vh_*)$ such that 
$$
(0,dh,h)=(\partial\geocob,dh',\psi(\geocob)+h').
$$ 
Since $\partial\geocob=0$, we have that 
$$
f:=f_b|_{c_b^{-1}([0,1])}\in ZMU^n(X)
$$
is a bordism cycle. 
By definition of $\psi$, we have $\psi(\geocob)=(-1)^n \phi(\geocycle)$ where $\geocycle$ is the obvious geometric cycle over $f$. We now have the following computation in $\HXAFp$:
$$
[h]=[h-h']=[\psi(\geocob)]=(-1)^n \overline{\phi}([f])\in \Imm(\overline{\phi}).
$$
This finishes the proof.
\end{proof}


\subsection{Pullbacks}

We now establish the contravariant functoriality of $MU^n(p)(X)$, along the lines of \cite{Bunke2009}[sections 4.2.5--4.2.6].
We denote by $WF(u)$ the wave-front set of a current $u$ in the sense of \cite[8.1]{Hoer}. See in particular \cite[Def.\,8.1.2, and p.\,265]{Hoer} for the definition. 
For a smooth map $f\colon Z\to X$, we denote by $N(f)\subset T^*X$ the normal set of $f$: 
$$
N(f)=\{v\in T^*X : v \neq 0,\ f^*v=0\}.
$$
Let $g \colon Y \to X$ be a morphism in $\ManF$. We define $ZMU_g^n(p)(X)$ to be the subset of $ZMU^n(p)(X)$ consisting of those $\gamma=(\geocycle,h)$ satisfying
\begin{itemize}
    \item $WF(h)\cap N(g)=\emptyset$, and
    \item $g\pitchfork f$.
\end{itemize}
For $\gamma = (\geocycle,h) \in ZMU_g^n(p)(X)$, we define $g^*\gamma\in ZMU^n(p)(Y)$ by
\begin{equation}
\label{defpb}
g^*(\hfcycle)=(g^*\geocycle, g^*h).
\end{equation}
Here $g^*h$ is defined by \cite[Theorem 8.2.4]{Hoer}. 
To define $g^*\geocycle$ we use the transversality property and consider the following cartesian diagram of manifolds:
$$
\xymatrix{g^*Z\ar[r]^{G}\ar[d]_{g^*f} & Z\ar[d]^f\\ Y\ar[r]_g & X.}
$$
Then $g^*f$ is complex-oriented, with $N_{g^*f}=G^*N_f$. We define $g^*\geocycle$ by
$$
g^*\geocycle = (g^*f, G^*N_f, G^*\nabla_f).
$$

The aim of this section is to show the following theorem: 

\begin{theorem} \label{PullbackHFCBordism}
The above pullback construction induces a map 
$$
g^*\colon MU^n(p)(X)\to MU^n(p)(Y)
$$
for any morphism $g:Y\to X$, making $MU^n(p)$ a contravariant functor on $\ManF$.
\end{theorem}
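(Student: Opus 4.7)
The plan is to follow the strategy of Bunke--Schick--Schr\"oder--Wiethaup in \cite{Bunke2009}[sections 4.2.5--4.2.6] for differential cobordism, adapted to the Hodge filtered setting, and to proceed in four steps.

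First, I would check that the prescription $\gamma \mapsto g^{*}\gamma$ on $ZMU_{g}^{n}(p)(X)$ actually produces an element of $ZMU^{n}(p)(Y)$. Given $\gamma = (\geocycle, h)$ with $g \pitchfork f$, the transverse Cartesian square defining $g^{*}\geocycle$ yields a complex-oriented map $g^{*}f$ with pulled-back stable normal bundle $G^{*}N_{f}$ and connection $G^{*}\nabla_{f}$, and base change in the transverse square gives the identity $\phi^{p}(g^{*}\geocycle) = g^{*}\phi^{p}(\geocycle)$ at the level of currents. The wavefront condition $WF(h) \cap N(g) = \emptyset$ makes $g^{*}h$ a well-defined current by \cite[Theorem 8.2.4]{Hoer}, and pullback of currents commutes with $d$. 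Since $g$ is a morphism in $\ManF$, pullback preserves the filtration $F^{p}\Ah^{*}$, so
$$
\phi^{p}(g^{*}\geocycle) - d(g^{*}h) \;=\; g^{*}\bigl(\phi^{p}(\geocycle) - dh\bigr) \;\in\; F^{p}\Ah^{n}(Y;\Vh_{*}).
$$

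Second, I would show every class in $MU^{n}(p)(X)$ admits a representative in $ZMU_{g}^{n}(p)(X)$. Starting from an arbitrary $\gamma = (\geocycle, h)$, Thom's transversality theorem provides a smooth homotopy of the underlying proper map $f$ to one transverse to $g$; this homotopy promotes to a geometric bordism datum $\geocob$ over $X$, so subtracting $(\partial \geocob, \psi^{p}(\geocob))$ from $\gamma$ yields an equivalent cycle whose underlying map is transverse to $g$. For the wavefront condition, I would apply de Rham--Schwartz regularization to write $h = h_{\mathrm{sm}} + dT$ modulo exact terms, with $h_{\mathrm{sm}}$ smooth; smooth forms have empty wavefront set, and the exact part contributes only via $a(\cdot)$ to the class, so one can adjust the representative to satisfy $WF(h) \cap N(g) = \emptyset$ without changing the class modulo $BMU^{n}(p)(X)$. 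The delicate point is to carry out the regularization while keeping the relation $\phi^{p}(\geocycle) - dh \in F^{p}\Ah^{n}(X;\Vh_{*})$ intact, which is handled by absorbing the correction into the $a(\widetilde F^{p}\Ah^{n-1}(X;\Vh_{*}))$ summand of $BMU^{n}(p)(X)$.

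Third, I would show independence of the representative by extending the pullback operation to geometric bordism data. If $\geocob$ is a geometric bordism datum on $\R \times X$ whose underlying map is transverse to $\id_{\R} \times g$ and whose associated currents avoid $N(\id_{\R} \times g)$, then $(\id_{\R} \times g)^{*}\geocob$ is a geometric bordism datum on $\R \times Y$ satisfying $\partial\bigl((\id_{\R} \times g)^{*}\geocob\bigr) = g^{*}(\partial \geocob)$ and $\psi^{p}\bigl((\id_{\R} \times g)^{*}\geocob\bigr) = g^{*}\psi^{p}(\geocob)$. Combined with the trivial identity $g^{*}a(\eta) = a(g^{*}\eta)$ for $\eta \in \widetilde F^{p}\Ah^{n-1}(X;\Vh_{*})$, and invoking the representability step above to connect any two representatives by a bordism datum with the required transversality and wavefront properties, this shows $g^{*}$ descends to a well-defined homomorphism on $MU^{n}(p)(X)$. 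Functoriality $(g_{1} \circ g_{2})^{*} = g_{2}^{*} \circ g_{1}^{*}$ then follows at the cycle level from composition of Cartesian squares for the geometric part and from Hörmander's composition formula for pullback of currents with compatible wavefront conditions.

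The principal obstacle is the simultaneous achievement in step two of transversality and the wavefront condition inside a single cobordism class, because the two moves interact: modifying $f$ by a bordism alters $h$ through the term $\psi^{p}(\geocob)$, which itself is a pushforward current whose wavefront set must be controlled. Resolving this requires either performing the regularization after the transversality deformation or combining the two deformations into a single bordism datum with carefully chosen connection data, paralleling the analogous balancing act in \cite{Bunke2009}.
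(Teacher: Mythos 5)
Your overall strategy --- decomposing the proof into well-definedness of the cycle-level pullback, representability within $ZMU_g^n(p)(X)$, independence of the representative, and functoriality --- matches the paper's approach via Propositions \ref{pbwd}, \ref{invpb}, and \ref{NaturalityOfZMUnpPullback}. Your steps 1, 3 and 4 are essentially sound, and you correctly single out step 2 as the delicate part.

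However, your proposed regularization in step 2 would fail. You want to write $h = h_{\sm} + dT$ modulo exact terms with $h_{\sm}$ a \emph{smooth} form. This is impossible in general: the cycle condition forces $dh = \phi^p(\geocycle) - \omega$, and $\phi^p(\geocycle) = f_*K^p(\nabla_f)$ is a genuinely singular pushforward current (for instance supported on $f(Z)$ when $f$ is an embedding of positive codimension), while $\omega$ is smooth; hence $dh$ is singular, and any smooth $h_{\sm}$ would have smooth $dh_{\sm} \neq dh$. The correct move is not to smooth $h$ but only to shrink its wavefront set down to that of $dh$. This is the content of the paper's Lemma \ref{4.11} (following \cite[Lemma 4.11]{Bunke2009}): choosing a parametrix $\mathscr P$ for the Laplacian and setting $G = d^*\mathscr P$, one obtains a homotopy $dG + Gd = 1 + S$ with $S$ smoothing, and $\beta = Gh$ yields a cohomologous current $h - d\beta$ whose wavefront set lies in $WF(dh)$ because $G$ is a pseudodifferential operator. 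After the transversality bordism replaces $(\geocycle, h)$ by $(\geocycle_1, h + \psi^p(\geocob))$, the new exterior derivative equals $\phi^p(\geocycle_1) - \omega$ whose wavefront set lies in $N(f_1)$, which is disjoint from $N(g)$ since $f_1 \pitchfork g$; the parametrix argument applied to $h + \psi^p(\geocob)$ then produces the desired representative in $ZMU_g^n(p)(X)$, so the ``simultaneity'' concern you raise is dissolved by performing the two moves in this order. Finally, note that the push-pull identity $g^*\circ f_* = f'_*\circ g'^*$ for currents, which you invoke implicitly in steps 1 and 3, is itself a non-trivial microlocal fact which the paper proves separately as Theorem \ref{currentialPushPull} via the Schwartz kernel theorem and H\"ormander's criterion.
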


The proof proceeds in three steps:
\begin{prop}
\label{pbwd}
Given a morphism $g \colon Y\to X$ in $\ManF$, and a Hodge filtered cycle $\gamma\in ZMU^n(p)(X),$ there exist $b\in BMU^n(p)(X)$ such that 
$$
\gamma+b\in ZMU_g^n(p)(X).
$$ 
\end{prop}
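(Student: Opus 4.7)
The plan is to split the two requirements defining $ZMU_g^n(p)(X)$---transversality $g \pitchfork f$ of the underlying map of $\widetilde f$, and the wave-front condition $WF(h) \cap N(g) = \emptyset$ on the current---and to address them respectively by a geometric bordism and by a judicious choice of representative for the current class.

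First, apply Thom's transversality theorem to the underlying complex oriented map $f \colon Z \to X$ of $\widetilde f$ to obtain a smooth homotopy from $f$ to a map $f' \colon Z \to X$ that is transverse to $g$. Extending the complex orientation, the stable normal bundle and its connection across the cylinder $Z \times \R$ in the standard way yields a geometric bordism datum $\widetilde{b}_0$ of codimension $n$ over $X$ with $\partial \widetilde{b}_0 = \widetilde{f}' - \widetilde{f}$, where $\widetilde{f}'$ is the evident geometric cycle with underlying map $f'$. Set
\[
b := (\partial \widetilde{b}_0,\, \psi^p(\widetilde{b}_0)) \in BMU^n_{\geo}(p)(X) \subset BMU^n(p)(X),
\]
so that $\gamma + b = (\widetilde{f}',\, h + \psi^p(\widetilde{b}_0))$ has its underlying map transverse to $g$ by construction.

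For the wave-front condition, set $h' := h + \psi^p(\widetilde{b}_0)$. Combining the hypothesis $\phi^p(\widetilde{f}) - dh \in F^p\Ah^n(X;\Vh_*)$ with Proposition \ref{geobordismequation} yields
\[
dh' = \phi^p(\widetilde{f}') - \omega
\]
for the same smooth form $\omega \in F^p\Ah^n(X;\Vh_*)$ that arose for $\gamma$. Since $\phi^p(\widetilde{f}') = f'_* K^p(\nabla_{f'})$ is the pushforward of a smooth form along the proper map $f'$, its wave-front set is contained in the conormal set $N(f')$, and the transversality $f' \pitchfork g$ implies $N(f') \cap N(g) = \emptyset$. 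Hence $WF(dh') \cap N(g) = \emptyset$.

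Finally, recall from Definition \ref{def:ZMUn(p)(X)} that $h'$ represents a class in $\Ds^{n-1}(X;\Vh_*)/d\Ds^{n-2}(X;\Vh_*)$, so any representative is admissible. Apply de Rham's regularization: there exist operators $R$ and $A$ on currents, with $R$ landing in smooth forms, satisfying $I - R = dA + Ad$ together with the pseudo-local estimate $WF(A(u)) \subseteq WF(u)$ for every current $u$. Then
\[
h' = R(h') + A(dh') + dA(h') \equiv R(h') + A(dh') \pmod{d\Ds^{n-2}}.
\]
The smooth form $R(h')$ has empty wave-front set and $WF(A(dh')) \subseteq WF(dh') \subseteq N(f')$, so this representative of $[h']$ has wave-front set disjoint from $N(g)$. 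This shows $\gamma + b \in ZMU_g^n(p)(X)$. The main technical input is the wave-front compatibility of de Rham's smoothing operators, i.e.\ their pseudo-local property; granted this, the remainder of the argument is essentially formal bookkeeping.
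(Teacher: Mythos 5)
Your overall strategy coincides with the paper's: first use Thom transversality together with a geometric bordism to replace the underlying map by one transverse to $g$; then observe that the new current $h'$ has $dh' = \phi^p(\widetilde f') - \omega$ with $\omega$ smooth, so $WF(dh') = WF(\phi^p(\widetilde f')) \subset N(f')$, and $N(f')\cap N(g)=\emptyset$ by transversality; finally modify $h'$ within its class modulo $d\Ds^{n-2}$ so that its own wave-front set shrinks to $WF(dh')$. The first two steps are exactly what the paper does and are correct.

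The gap is in the final step, where you invoke de Rham's regularization operators $R$ and $A$ with $I-R = dA+Ad$ and assert the microlocal estimate $WF(Au)\subseteq WF(u)$. This is not a property that de Rham's construction is known to have. Having a Schwartz kernel that is smooth off the diagonal (which de Rham's $A$ does have, and which underlies the support-shrinking properties he proves) yields pseudo-locality only for singular supports, $\singsupp(Au)\subset \singsupp(u)$; it does \emph{not} yield the wave-front inclusion, which is strictly stronger, since operators with off-diagonal smooth kernels (e.g.\ Fourier integral operators whose canonical relation meets the diagonal only over the identity) can still move wave-front directions at a fixed point of singular support. Control of the wave-front set requires that the chain homotopy be a genuine (properly supported) pseudo-differential operator, and this is precisely the content of the paper's Lemma \ref{4.11}: one takes a properly supported parametrix $\mathscr{P}$ of the Laplacian, sets $G = d^{*}\mathscr{P}$, so that $dG + Gd = I + S$ with $S$ smoothing, and uses that $G$ is a pseudo-differential operator, for which $WF(Gu)\subset WF(u)$ is a theorem of Hörmander. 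Your argument becomes correct if you replace the unproven claim about de Rham's $A$ with this Laplacian-parametrix homotopy; as written, the microlocality of de Rham's $A$ is a nontrivial unaddressed claim, and it is exactly the point at which the paper deliberately switches from soft regularization to pseudo-differential calculus.
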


\begin{prop}
\label{invpb}
If $\gamma\in ZMU_g^n(p)(X)\cap BMU^n(p)(X)$, 
then 
\[
g^*\hfcycle \in BMU^n(p)(Y).
\]
\end{prop}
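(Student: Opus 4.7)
The plan is to handle the two generators of $BMU^n(p)(X)$ separately and to use relative Thom transversality to put any geometric boundary into $g$-pullback-compatible form. Write $\gamma = (\partial \geocob_0, \psi(\geocob_0)) + a(h')$ with $\geocob_0$ a geometric bordism datum of underlying map $b_0 \colon W \to \R \times X$ and $h' \in \widetilde F^p\Ah^{n-1}(X;\Vh_*)$. The $a$-part pulls back without difficulty: since $g$ is a morphism in $\ManF$, it preserves the filtration and commutes with exterior differentiation, so $g^*h' \in \widetilde F^p\Ah^{n-1}(Y;\Vh_*)$, and hence $a(g^*h') \in BMU^n(p)(Y)$.

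For the geometric part, the assumption $\gamma \in ZMU_g^n(p)(X)$ gives $\partial b_0 \pitchfork g$, so $b_0$ is automatically transverse to $\id_\R \times g$ in a neighborhood of $\{0,1\} \times X$. A relative form of Thom's transversality theorem then produces a homotopy of $b_0$, fixed near $\{0,1\} \times X$, to a new map $b_1$ with $b_1 \pitchfork (\id_\R \times g)$ everywhere. Extending the connection $\nabla_{b_0}$ along the homotopy, by pulling back along the projection $W \times I \to W$ and patching via a partition of unity near the fixed boundary, yields a bordism datum $\geocob_1$ with $\partial \geocob_1 = \partial \geocob_0$. A bordism-of-bordisms argument analogous to Proposition \ref{geobordismequation}, applied one dimension higher, shows that $\psi(\geocob_1) - \psi(\geocob_0) = d\eta + \omega$ for a smooth form $\eta \in \Ah^{n-2}(X;\Vh_*)$ and an $\omega \in F^p\Ah^{n-1}(X;\Vh_*)$. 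Thus $(\partial \geocob_1, \psi(\geocob_1))$ differs from $(\partial \geocob_0, \psi(\geocob_0))$ by an element of $a(\widetilde F^p\Ah^{n-1}(X;\Vh_*))$.

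Replacing $\geocob_0$ by $\geocob_1$ in the decomposition of $\gamma$ and updating $h'$ accordingly within $\widetilde F^p\Ah^{n-1}(X;\Vh_*)$, we may assume from the outset that $b_0 \pitchfork (\id_\R \times g)$. Then $(\id_\R \times g)^*\geocob_0$ is a well-defined geometric bordism datum over $Y$; its boundary is $g^*(\partial \geocob_0) = g^*\geocycle$ and its $\psi$-value is $g^*\psi(\geocob_0)$, by the compatibility of transverse pullback with pushforward of forms along complex-oriented maps. Combined with the $a$-contribution from the first paragraph, this yields $g^*\gamma \in BMU^n(p)(Y)$. The main obstacle is the transgression computation in the transversification step: one must verify that the correction form arising from the bordism-of-bordisms lies in $\widetilde F^p\Ah^{n-1}(X;\Vh_*)$ rather than in a larger subspace, which requires tracking how the Chern--Weil form $K^p(\nabla)$ behaves under the chosen connection extension, together with the fact that the homotopy is supported away from $\{0,1\}\times X$.
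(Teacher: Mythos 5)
Your proposal follows the same overall strategy as the paper's proof: decompose $\gamma$ into a geometric boundary piece $(\partial\geocob_0,\psi(\geocob_0))$ plus an $a$-piece; handle the $a$-piece by naturality of $a$ and the morphism property of $g$; perturb the bordism datum relative to its boundary to achieve transversality to $g$; and then pull back, using the push-pull compatibility (Theorem \ref{currentialPushPull}) to identify $g^*\psi(\geocob)$ with $\psi(g^*\geocob)$. The paper is terse about the transversification step — it simply replaces $\geocob$ by the perturbed version — and your instinct to spell out what happens there is sound.

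However, the specific claim you make in the transversification step is both unjustified and stronger than necessary, and this is where the argument as written has a gap. You assert that $\psi(\geocob_1)-\psi(\geocob_0)=d\eta+\omega$ with $\eta$ a \emph{smooth} form in $\Ah^{n-2}(X;\Vh_*)$ and $\omega\in F^p\Ah^{n-1}(X;\Vh_*)$. The bordism-of-bordisms Stokes computation produces $\eta$ as a pushforward current $F_*K(\nabla)$ along a proper map $F\colon [0,1]\times W_{[0,1]}\to X$ that is generally not a submersion; there is no reason for $\eta$ to be a smooth form, and no $F^p$-form $\omega$ appears in the computation at all. Tracking a correction term into $\widetilde F^p\Ah^{n-1}(X;\Vh_*)$ is therefore the wrong target. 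What the Stokes argument actually gives — and all that is needed — is that $\psi(\geocob_1)-\psi(\geocob_0)\in d\Ds^{n-2}(X;\Vh_*)$, using that the perturbing homotopy and the extended connection are constant near $\partial W_{[0,1]}$ so the lateral boundary term vanishes. Since the current component of a Hodge filtered cycle lives in $\Ds^{n-1}(X;\Vh_*)/d\Ds^{n-2}(X;\Vh_*)$ by Definition \ref{def:ZMUn(p)(X)}, this immediately yields $(\partial\geocob_0,\psi(\geocob_0))=(\partial\geocob_1,\psi(\geocob_1))$ as literal equality of elements of $ZMU^n(p)(X)$, with no correction term to absorb. In particular, the ``main obstacle'' you flag at the end — verifying that the correction lies in $\widetilde F^p\Ah^{n-1}(X;\Vh_*)$ — is a non-issue once you use the currential quotient rather than aiming for smoothness of $\eta$.
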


For morphisms $g_1\colon X_1\to X_2$ and $g_2\colon X_2\to X_3$ in $\ManF$, we define 
\begin{equation*}
ZMU^n_{g_1g_2}(p)(X_3) := ZMU^{n}_{g_2}(p)(X_3)\cap ZMU^n_{g_2\circ g_1}(p)(X_3).
\end{equation*}

\begin{prop}\label{NaturalityOfZMUnpPullback}
We have $g_2^*\left(ZMU^n_{g_1g_2}(p)(X_3)\right) \subset ZMU^n_{g_1}(p)(X_2)$, and
$$
g_1^*\circ g_2^* = (g_2\circ g_1)^*\colon ZMU^n_{g_1g_2}(p)(X_3) \to ZMU^n(p)(X_1).
$$
\end{prop}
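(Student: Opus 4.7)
The plan is to verify the two assertions in sequence. Fix $\gamma=(\widetilde f,h)\in ZMU^n_{g_1g_2}(p)(X_3)$; by definition $g_2\pitchfork f$ and $g_2\circ g_1\pitchfork f$, while $WF(h)$ is disjoint from both $N(g_2)$ and $N(g_2\circ g_1)$. The conditions involving $g_2$ alone guarantee through \eqref{defpb} that $g_2^*\gamma=(g_2^*\widetilde f, g_2^*h)\in ZMU^n(p)(X_2)$ is defined. To upgrade this membership to $ZMU^n_{g_1}(p)(X_2)$, I first check transversality of $g_1$ with $g_2^*f$. At a point $y_1\in X_1$ with $g_1(y_1)=y_2$ and $(y_2,z)\in g_2^*Z$, I use the description $T_{(y_2,z)}(g_2^*Z)=\{(u_2,v)\in T_{y_2}X_2\oplus T_zZ : g_2'(y_2)u_2=f'(z)v\}$. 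Given $w\in T_{y_2}X_2$, I apply transversality of $g_2\circ g_1$ and $f$ to decompose $g_2'(y_2)w=(g_2\circ g_1)'(y_1)u_1+f'(z)v$; the element $u_2:=w-g_1'(y_1)u_1$ then satisfies $g_2'(y_2)u_2=f'(z)v$, so $(u_2,v)\in T_{(y_2,z)}(g_2^*Z)$ and $w=g_1'(y_1)u_1+(g_2^*f)'(y_2,z)(u_2,v)$.

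For the wave-front condition, I would invoke the pullback estimate $WF(g_2^*h)\subset\{(y_2,g_2'(y_2)^T\eta):(g_2(y_2),\eta)\in WF(h)\}$ from \cite[Theorem 8.2.4]{Hoer}. Assume $(y_2,\xi_2)\in WF(g_2^*h)\cap N(g_1)$ with $y_2=g_1(y_1)$, and pick $(g_2(y_2),\eta)\in WF(h)$ with $\xi_2=g_2'(y_2)^T\eta$. Since $\xi_2\neq 0$, necessarily $\eta\neq 0$; moreover $g_1'(y_1)^T\xi_2=0$ gives $(g_2\circ g_1)'(y_1)^T\eta=g_1'(y_1)^T g_2'(y_2)^T\eta=0$, so $\eta\in N(g_2\circ g_1)$, contradicting $WF(h)\cap N(g_2\circ g_1)=\emptyset$. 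Hence $WF(g_2^*h)\cap N(g_1)=\emptyset$, showing $g_2^*\gamma\in ZMU^n_{g_1}(p)(X_2)$ and in particular that $g_1^*(g_2^*\gamma)$ is defined.

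For the identity $g_1^*\circ g_2^*=(g_2\circ g_1)^*$ on $ZMU^n_{g_1g_2}(p)(X_3)$, I would treat the geometric cycle and the current separately. On the geometric side, associativity of fiber products yields a canonical isomorphism $g_1^*(g_2^*Z)\cong(g_2\circ g_1)^*Z$ of complex-oriented maps under which the composite projection $g_1^*(g_2^*Z)\to g_2^*Z\to Z$ coincides with the direct one $(g_2\circ g_1)^*Z\to Z$; functoriality of bundle pullback then identifies both $(G_2\circ G_1)^*N_f$ with $G^*N_f$ and $(G_2\circ G_1)^*\nabla_f$ with $G^*\nabla_f$, so the underlying geometric cycles agree. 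On the current side, $g_1^*(g_2^*h)=(g_2\circ g_1)^*h$ is the chain rule for pullbacks of distributions, obtained by applying \cite[Theorem 8.2.4]{Hoer} twice; its hypothesis is precisely the wave-front compatibility established above. Combining these gives the equality of Hodge filtered cycles. The one technical obstacle in the argument is the wave-front bookkeeping along the composition, but Hörmander's pullback formula reduces it to the short linear-algebra calculation given in the previous paragraph.
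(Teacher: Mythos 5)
Your proof is correct and follows the same strategy as the paper's, which simply cites \cite[Theorem 8.2.4]{Hoer} for the current equality and appeals to naturality of connection pullback. The paper leaves the first assertion (that $g_2^*\gamma$ lands in $ZMU^n_{g_1}(p)(X_2)$) essentially unargued, whereas you supply the two missing checks explicitly: the linear-algebra verification that $g_2\circ g_1\pitchfork f$ and $g_2\pitchfork f$ together force $g_1\pitchfork g_2^*f$, and the wave-front estimate showing $WF(g_2^*h)\cap N(g_1)=\emptyset$ via $g_1'(y_1)^T g_2'(y_2)^T\eta=(g_2\circ g_1)'(y_1)^T\eta$. Both are standard facts the paper implicitly relies on (the second is already built into Hörmander's Theorem 8.2.4), so you haven't changed the route, only filled in the details; this is a worthwhile expansion since a reader might otherwise not notice that the inclusion $g_2^*(ZMU^n_{g_1g_2}(p)(X_3))\subset ZMU^n_{g_1}(p)(X_2)$ is itself something to prove.
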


Since transversality is a generic property for smooth maps, and homotopies can be viewed as cobordisms, these three propositions together prove Theorem \ref{PullbackHFCBordism}.
\begin{proof}[Proof of Proposition \ref{NaturalityOfZMUnpPullback}]
Let $\hfcycle=(\geocycle, h)\in ZMU^n_{g_1g_2}(p)(X)$. The equality of currents $g_1^*g_2^*h=(g_1\circ g_2)^*h$ follows from \cite[Theorem 8.2.4]{Hoer}. 
We only need to add that connections pull back in a natural way as well.
\end{proof}

\begin{proof}[Proof of Proposition \ref{pbwd}]
By Thom's transversality theorem  
we may choose $f_1$ homotopic to $f$ so that $f_1\pitchfork g$. 
Then there is a complex orientation of $f_1$ and a cobordism between $f_1$ and $f$ of the form $b=\mathbb R\times Z\to \mathbb R\times X$ with $b(t,z)=(t,H(t,z))$ where $H$ is a homotopy between $f_1$ and $f$.
We may extend the geometric structure of $\widetilde f$ over $b$ and obtain a geometric bordism datum $\widetilde b$.
We give $f_1$ the geometric structure $i_1^*(\geocob)$, for $i_1\colon X\to \mathbb R\times X$ the inclusion $i_1(x)=(1,x)$. By design we have $\partial \geocob=\widetilde f_1-\widetilde f$. Hence 
$$
(\widetilde f_1, \omega, h+ \psi(\geocob)) - (\widetilde f, \omega, h)\in BMU^n_{\geo}(X),
$$
and $g^*\widetilde f_1$ is well-defined. 
Since $\phi(\widetilde f)- dh$ is smooth, we must have $WF(\phi(\widetilde f))=WF(dh)$. Since $d$ is a differential operator we have $WF(dh)\subset WF(h)$ by \cite[8.1.11]{Hoer}, but equality need not hold. 
To complete the proof, we must show that upon replacing $h$ with a cohomologous current if necessary, we can assume $WF(h) = WF(dh)$, since $a(d\beta) \in BMU^n(p)(X)$ by definition. 
Thus we have reduced the proof of the assertion to proving the following lemma.
\end{proof}

\begin{lemma}
\label{4.11}
Let $\alpha\in \Ds^{n}(X;\mathcal V_*)$. 
Then there exists a current $\beta\in \Ds^{n-1}(X;\mathcal V_*)$ so that $WF(d\alpha)=WF(\alpha+d\beta)$.
\end{lemma}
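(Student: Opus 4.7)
\medskip

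\noindent\emph{Proof plan.} The containment $WF(d\alpha)\subseteq WF(\alpha+d\beta)$ holds automatically for \emph{every} $\beta$, since $d$ is a differential operator and, by \cite[8.1.11]{Hoer}, differential operators do not enlarge the wave-front set: $WF(d(\alpha+d\beta))=WF(d\alpha)\subseteq WF(\alpha+d\beta)$. Hence the entire content of the lemma is to exhibit $\beta$ achieving the reverse inclusion $WF(\alpha+d\beta)\subseteq WF(d\alpha)$.

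The natural tool is Hodge theory. Equip $X$ with an auxiliary Riemannian metric, let $\Delta=dd^*+d^*d$ be the Hodge Laplacian, and let $G$ be an associated Green/parametrix operator so that $\alpha = h_\alpha + \Delta G\alpha$ with $h_\alpha$ harmonic (hence smooth). Using $[d,G]=0$ one rewrites this as the standard orthogonal-type decomposition
\[
\alpha \;=\; h_\alpha \;+\; d\bigl(d^*G\alpha\bigr) \;+\; d^*G(d\alpha).
\]
Setting $\beta:=-d^*G\alpha$ then gives $\alpha+d\beta = h_\alpha + d^*G(d\alpha)$. The first summand is smooth, so contributes nothing to the wave-front set. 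For the second summand, $G$ is a parametrix for the elliptic operator $\Delta$ and hence a pseudodifferential operator; in particular $G$ and $d^*$ are pseudolocal and do not enlarge the wave-front set (see e.g.\ \cite[8.2]{Hoer}). Therefore $WF(d^*G(d\alpha))\subseteq WF(d\alpha)$, which combined with the automatic inclusion yields the desired equality. The construction extends trivially to $\Vh_*$-valued currents by linearity in the coefficients.

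For $X$ non-compact, the above form of Hodge theory is unavailable globally, so the argument needs to be localised. The plan is to choose a locally finite cover of $X$ by relatively compact charts $\{U_i\}$ with a subordinate partition of unity $\{\chi_i\}$; apply the compact-case construction to each piece $\chi_i\alpha$ (which one may regard as a current on a compact manifold, e.g.\ a large torus containing the closure of $U_i$) to produce local currents $\beta_i$ with $WF(\chi_i\alpha+d\beta_i)\subseteq WF(d(\chi_i\alpha))$; and then sum $\beta=\sum_i\beta_i$. Because pseudolocal operators commute with $\chi_i$ only up to terms supported where $d\chi_i\neq 0$, one has to absorb the resulting error currents, but these errors all sit inside $WF(\alpha)$ microlocalised where $d\chi_i\neq 0$, which can be shown to lie in $WF(d\alpha)$ after a further single application of the compact-case construction.

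I expect the main obstacle to be precisely this patching step: ensuring that no new directions are introduced into $WF(\alpha+d\beta)$ when gluing the local $\beta_i$. Everything else (the Hodge decomposition, pseudolocality of $G$ and $d^*$, smoothness of the harmonic part, and the automatic inclusion $WF(d\alpha)\subseteq WF(\alpha+d\beta)$) is standard microlocal analysis. The non-compact patching can equivalently be circumvented by replacing $G$ globally by a properly supported parametrix for $\Delta+1$, which exists on any paracompact manifold and enjoys the same pseudolocality; this avoids cutoffs altogether and is probably the cleanest route to a writeup.
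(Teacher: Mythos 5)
Your central idea — a chain homotopy built from the Hodge Laplacian and the pseudolocality of the Green/parametrix operator — is the same as the paper's (which simply recounts \cite[Lemma 4.11]{Bunke2009}), and your observation that the containment $WF(d\alpha)\subseteq WF(\alpha+d\beta)$ is automatic (since $d(\alpha+d\beta)=d\alpha$ and $d$ is pseudolocal) is correct and worth stating explicitly. Your compact-case computation via $\alpha = h_\alpha + d(d^*G\alpha) + d^*G(d\alpha)$ is fine.

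The gap is in how you pass to non-compact $X$, and both routes you sketch have real problems. For the partition-of-unity patching: $d(\chi_i\alpha)=d\chi_i\wedge\alpha+\chi_i\,d\alpha$, so the local wave-front bound you produce is only $WF(\chi_i\alpha+d\beta_i)\subseteq WF(d\chi_i\wedge\alpha)\cup WF(d\alpha)$, and $WF(d\chi_i\wedge\alpha)$ is governed by $WF(\alpha)$ rather than $WF(d\alpha)$; the assertion that the resulting error "can be shown to lie in $WF(d\alpha)$ after a further single application of the compact-case construction" is not substantiated and looks like the original problem in disguise. For the $\Delta+1$ route: from $(\Delta+1)\mathscr P = 1+S$ and $G=d^*\mathscr P$ one gets only $dG+Gd = 1-\mathscr P + (\text{smoothing})$; since $\mathscr P$ is a pseudodifferential operator of order $-2$, not smoothing, the leftover term $\mathscr P\alpha$ satisfies $WF(\mathscr P\alpha)\subseteq WF(\alpha)$, which you cannot control by $WF(d\alpha)$. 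The paper's fix is simpler and avoids both issues: a \emph{properly supported} parametrix $\mathscr P$ for $\Delta$ itself exists on any paracompact manifold by Hörmander's Theorem 18.1.24 (no compactness and no shift by $+1$ needed). With $G=d^*\mathscr P$ one then has $dG+Gd=1+S$ with $S$ smoothing (here one uses that $d$ commutes with $\Delta$, so $[\mathscr P,d]$ is smoothing), and the single global identity $\alpha - dG\alpha = Gd\alpha - S\alpha$ immediately gives $WF(\alpha - dG\alpha)\subseteq WF(d\alpha)$, so taking $\beta=-G\alpha$ finishes the proof without any patching.
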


\begin{proof}
This is \cite[Lemma 4.11]{Bunke2009}. For convenience of the reader, we recount their proof. Choose a Riemannian metric on $X$. Let $d^*$ be formally adjoint to $d$. Then we consider the Laplacian $\Delta=d^*d+dd^*$, which is an elliptic differential operator
$$
\Delta\colon \Ds^*(X)\to \Ds^*(X).
$$
Using \cite[Theorem 18.1.24]{HoerIII} we can find a parametrix $\mathscr P\colon \Ds^*(X)\to \Ds^*(X)$, properly supported in the sense that both projections from the support of the Schwartz kernel of $\mathscr P$, which we denote $P$, $\Ds^*(X\times X)\supset \supp P \to X $ are proper maps, such that both $\Delta\circ \mathscr P-id$ and $\mathscr P\circ \Delta - id$ are smoothing operators. We put $G=d^*\mathscr P$. This pseudo-differential operator satisfies 
$$
dG+Gd=1+S
$$ 
for a smoothing operator $S$. Let $\beta= G\alpha$. 
Then we get 
\begin{align*}
    \alpha- d\beta &= \alpha - dG\alpha 
    = \alpha - (1-G d+ S)\alpha 
    =-G d\alpha  +S\alpha.
\end{align*}
Since $G$ is a pseudo-differential operator, we have $WF(G d\alpha)\subset WF(d\alpha)$. 
This can, for example, be seen by taking $\Gamma=T^*X\backslash 0$ in \cite[Proposition 18.1.26.]{HoerIII}. This finishes the proof.
\end{proof}

\begin{proof}[Proof of Proposition \ref{invpb}]
Let $\hfcycle=(\geocycle, h)\in ZMU^n(p)(X)_g\cap BMU^n(p)(X)$. We can then write 
$$
\hfcycle  = (\partial \widetilde b, \phi(\widetilde b))+a(h)
$$ where $\geocob$ is a geometric cobordism with underlying map $ b=(a_b,f_b):W\to \mathbb R\times X$ and 
$h\in \widetilde F^p\Ah^{n-1}(X;\Vh_*)$. 
We must show that $g^*\hfcycle \in BMU^n(p)(Y)$. 
We start by noting that, since $g$ is a morphism in $\ManF$, we have
$$
g^*a(h)=a(g^*h)\in BMU^n(p)(Y).
$$ 
We can perturb $f_b$ slightly, without $\partial\geocob$, so as to ensure $f_b\pitchfork g$. 
We extend the geometric structure of $\geocob$ over the perturbing homotopy 
and obtain a geometric bordism datum  $\geocob$ with $\partial \geocob=\geocycle$.
Now we consider the pullback geometric bordism datum $g^*\geocob$, which is formally defined as the geometric cycle $(id_{\mathbb R} \times g)^*\widetilde b$. It has underlying map $(a',f')\colon W'\to Y$, fitting into the following diagram where all squares are pullback squares of manifolds for $t\in \{0,1\}$.
$$
\xymatrix{
  & W_t\ar[d]\ar[dr] & \\
W'_t\ar[d]\ar[dr]\ar[ur] & t\times X\ar[dr] & W\ar[d]^-{(a_b,f_b)} \\
t\times Y\ar[dr]\ar[ur] & W'\ar[d]^(.3){(a'_b,f'_b)}\ar[ur]_(.3){G} & \mathbb R\times X \\
 & \mathbb R\times Y \ar[ur]_{id_\mathbb R\times g} & 
}
$$
It is then clear that $\partial g^* \widetilde b = g^*\partial\widetilde b$. We get from Theorem \ref{currentialPushPull} below that 
\begin{align*}
g^*\psi(\widetilde b) &= (-1)^ng^*(f_b|_{a_{b}^{-1}([0,1])})_*K(\nabla_{ b}) \\
    &= (-1)^n\left(f'_b|_{a_b^{\prime -1}([0,1])}\right)_*G^*K(\nabla_b) \\
        &= \psi(g^*\geocob). 
\end{align*}
Hence we have 
\[
g^*(\partial \geocob, \psi(\geocob))=(\partial g^*\geocob, \psi(g^*\geocob))\in BMU^n(p)(Y). \qedhere
\]
\end{proof}

It remains to show the following result which is certainly well-known. 
However, we were unable to find a reference and therefore provide a proof.

\begin{theorem}\label{currentialPushPull}
Let $f$ be a proper, oriented smooth map of codimension $d$, and let $g$ be smooth with $f\pitchfork g$. 
We consider the cartesian diagram
$$
\xymatrix{
Z':= Z\times_X Y\ar[r]^-{g'}\ar[d]_-{f'} & Z\ar[d]^-f \\
Y\ar[r]_-g & X.
}
$$
Then the following diagram exists and commutes:
$$
\xymatrix{
\Ah^*(Z')\ar[d]_-{f'_*} & \ar[l]_-{g^{\prime *}} \Ah^*(Z)\ar[d]^-{f_*} \\
\Ds^{*+d}(Y) & \ar[l]^-{g^*}  \Ds_g^{*+d}(X).
}
$$
Here we mean by $\Ds^*_g(X)$ the currents $T$ on $X$ which may be pulled back along $g$, in the sense that there is a unique current, which we label $g^*T$, in the set 
$$
\left \{\lim_{t\to 0} g^*(\omega_t) \ :\ \lim \omega_{t\to 0} = T \right \}.
$$
In particular, we have an equality of continuous $\C$-linear maps
$$
g^*\circ f_* = f'_*\circ g'^*\colon \Ah^*(Z)\to \Ds 
^{*+d}(Y).
$$
\end{theorem}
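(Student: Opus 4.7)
First, the transversality $f \pitchfork g$ ensures that $Z' = Z \times_X Y$ is a smooth manifold of dimension $\dim Y - d$, that $f'$ is proper of codimension $d$, and that $f'$ inherits a complex orientation from $f$ via the canonical isomorphism $N_{f'} \cong g'^{*}N_f$. In particular, $f'_* g'^{*}\omega$ is a well-defined current on $Y$ for any smooth form $\omega$ on $Z$. The proof therefore reduces to two tasks: (i) verify that $f_*\omega$ lies in $\Ds^*_g(X)$, so that $g^*(f_*\omega)$ exists, and (ii) establish the equality $g^*(f_*\omega) = f'_*(g'^{*}\omega)$ as currents on $Y$.

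For (i), standard microlocal analysis yields $WF(f_*\omega) \subset N(f)$: since $f$ is smooth and proper with $WF(\omega) = \emptyset$, a smooth proper pushforward sends the empty wavefront set into the conormal set of the map by \cite[Thm 8.2.12]{Hoer}. The assumption $f \pitchfork g$ translates directly into $N(f) \cap N(g) = \emptyset$ at any common point of $f(Z)$ and $g(Y)$. Hörmander's pullback theorem \cite[Thm 8.2.4]{Hoer} then produces the current $g^*(f_*\omega) \in \Ds^{*+d}(Y)$ together with continuity of $g^*$ on $\Ds^*_g(X)$ in the appropriate Hörmander topology.

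For (ii), I would factor $f = \pi \circ \iota$ through a proper closed embedding $\iota \colon Z \hookrightarrow X \times \R^N$ and the projection $\pi \colon X \times \R^N \to X$. Since $\pi$ is a submersion and is transverse to every smooth map into $X$, transversality lifts to $(g \times \id_{\R^N}) \pitchfork \iota$ whenever $g \pitchfork f$. By functoriality of pullback and pushforward along composites, the general statement reduces to the two cases of a submersion and a closed embedding. For the submersion $\pi$, the identity $g^*\pi_*\alpha = \pi'_*(g \times \id)^*\alpha$ is a Fubini-type fiber-integration formula, immediate for smooth compactly supported forms and extended to arbitrary currents in $\Ds^*_{g \times \id}(X \times \R^N)$ by continuity. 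For the closed embedding $\iota$, the current $\iota_*\eta$ is concentrated along $\iota(Z)$, and transversality provides a local product structure in a tubular neighborhood in which the desired equality becomes a direct coordinate computation; one then globalizes via a partition of unity on $Z'$.

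The main obstacle is the embedding case. There $\iota_*\eta$ is a genuinely singular distribution of Dirac type along a submanifold, and showing that its $(g \times \id)$-pullback equals $\iota'_*(g \times \id|_{Z'})^{*}\eta$ requires explicit work in a local tubular neighborhood where transversality yields a clean product chart. An alternative route avoids the factorization and instead tests both sides against a compactly supported form $\sigma$ on $Y$: the right-hand pairing equals $\int_{Z'} g'^{*}\omega \wedge f'^{*}\sigma$ by definition, while the left-hand pairing is evaluated via an approximation of $f_*\omega$ by smooth forms in the Hörmander topology and a Fubini argument along the double fibre product. Either route requires careful bookkeeping of orientations and of the continuity of the regularization procedure, which is where the technical difficulty is concentrated.
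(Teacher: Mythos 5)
Your strategy differs from the paper's and, as written, has a gap. The paper proves $g^* \circ f_* = f'_* \circ g'^*$ at one stroke by showing that the two operators have the same Schwartz kernel. It records that the kernel of $f_*$ is the integration current of $\Graph(f)$, that the kernel of $g^*$ is the integration current of the transposed graph $\Graph'(g)$, and that the kernel of $f'_*\circ g'^*$ is (by the direct pairing computation $\sigma_Z\otimes\sigma_Y\mapsto\int_{Z'}g'^*\sigma_Z\wedge f'^*\sigma_Y$) the integration current of $(g',f')(Z')\subset Z\times Y$. Hörmander's composition criterion \cite[8.2.14]{Hoer}, applicable here exactly because transversality gives $N(f)\cap N(g)=\emptyset$, then computes the kernel of $g^*\circ f_*$ as a pushed-forward restricted tensor product, and \cite[Example 8.2.8]{Hoer} identifies this with the integration current of $(g',f')(Z')$. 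Equality of Schwartz kernels gives equality of the continuous operators, which is the whole theorem.

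Your step (i), showing $WF(f_*\omega)\subset N(f)$ so that $g^*(f_*\omega)$ exists, is correct and matches the paper's microlocal input. Your step (ii), however, is only a sketch. You propose to factor $f=\pi\circ\iota$ and verify base change separately for the submersion $\pi$ and the embedding $\iota$, but you flag the embedding case yourself as an unexecuted ``direct coordinate computation in a tubular neighborhood, globalized by a partition of unity''; that is the actual content and is left undone. There is also a real subtlety in the claim that base change for $\pi$ extends to singular currents ``by continuity'': the pullback $g^*$ is only sequentially continuous in the Hörmander topology on $\Ds'_\Gamma$ (distributions with wavefront set in a fixed closed cone $\Gamma$), not in the weak-$*$ topology, so such an extension must verify that the chosen regularizations of $\iota_*\eta$ converge in the finer topology. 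These continuity and locality checks are exactly what the kernel/composition machinery packages away, which is why the paper takes that route. Your alternative suggestion at the end — pair both sides against a test form $\sigma$ on $Y$ and regularize — is in fact the closer cousin of the paper's argument, since $\int_{Z'}g'^*\omega\wedge f'^*\sigma$ is precisely the computation of the kernel of $f'_*\circ g'^*$; but you stop short of the Hörmander composition step needed to identify the kernel on the other side.
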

\begin{proof}
We will apply micro-local analysis and the Schwartz kernel theorem \cite[p.\,93--94]{HoerIII}.  
The Schwartz kernel theorem states that there is a bijection between $\Ds^*(Z\times X)$ and the space of maps $\Ah^*_c(Z)\to \Ds^*(X)$ given by 
$$
\Ds^*(Z\times X)\ni T\mapsto \left(\sigma_Z\mapsto \left(\sigma_X\mapsto T(\sigma_Z\otimes \sigma_X)\right)\right).
$$
Then $T$ is called the Schwartz kernel of the mapping it corresponds to.
We need the following three facts:
\begin{enumerate}
\item The Schwartz kernel of $f_*$ is the integration current $\delta_{\Graph(f)}$, where 
$$
\Graph(f)=\{(z,f(z))\}\subset Z\times X.
$$
\item The Schwartz kernel of $g^*$ is the integration current $\delta_{\Graph'(g)}$ of the transposed graph
$$
{\Graph}^{\prime}(g)=\{(g(y),y)\}\subset X\times Y.
$$
\item The kernel of $f^{\prime}_*\circ g^{\prime*}$ is the integration current of $(g',f')(Z')$.
\end{enumerate}
The last statement follows from
\begin{align*}
\delta_{(g',f')(Z')}(\sigma_Z\otimes \sigma_Y) &= \int_{Z'} g'^*\sigma_Z\wedge f'^*\sigma_X.
\end{align*}
The first two statements are special cases with either $f'$ or $g'$ as the identity. 

We now apply Hörmander's criterion, see \cite[8.2.14]{Hoer} and the preceding discussion. 
We get that when $f\pitchfork g$, then $g^*\circ f_*$ is continuous with kernel
\begin{align*}
K:=\left(\pi_{Z,Y}\right)_*\left(\id_Z\times \Delta \times \id_Y\right)^*\delta_{\Graph(f)}\otimes \delta_{{\Graph}'(g)},
\end{align*}
where $\pi_{Z,Y}\colon Z\times X\times Y\to Z\times Y$ is the projection.
Using \cite[Example 8.2.8]{Hoer} we can now deduce that $K$ is the current $(g',f')_*1_{Z'}$. Since we saw already that this is the kernel of $f'_*\circ g'^*$, we get from the Schwartz kernel theorem that 
\[
f'_*\circ g'^* = g^*\circ f_*. \qedhere
\]
\end{proof}

    
\subsection{Products}
\label{subsec:products}

We first define exterior products 
\begin{align}
    \label{extprod}
    \times\colon MU^{n_1}(p_1)(X_1)\times MU^{n_2}(p_2)(X_2)& \to MU^{n_1+n_2}(p_1+p_2)(X_1\times X_2)
    \end{align}
on where $X_1\times X_2$ carries the filtration of Definition \ref{def:Hodge_filtration_on_product}. When $X$ satisfies condition \eqref{productCondition}, i.e., when the diagonal $\Delta\colon X\to X\times X$ is a morphism in $\ManF$, we get the structure of a bigraded ring on 
$$
MU^*(*)(X)=\bigoplus_{n,p}MU^n(p)(X). 
$$

Let for $i=1,2$
$$
\hfcycle_i = (\geocycle_i,h_i)\in ZMU^{n_i}(p_i)(X_i)
$$ 
be Hodge filtered cycles with underlying maps $f_i\colon Z_i\to X_i$, and let $\pi_i$ be the projection $Z_1\times Z_2\to Z_i$. We define the exterior product of geometric cycles by 
$$
\widetilde f_1\times \widetilde f_2 = (f_1\times f_2,\ N_1\times N_2,\ \nabla_1\times \nabla_2),
$$ 
where we abbreviate $N_{f_1}$ by $N_1$ and so on, $N_1\times N_2 = \pi_1^*N_1 \oplus \pi_2^* N_2$, and $\nabla_1\times\nabla_2 = \pi_1^* \nabla_1 \oplus \pi_2^*\nabla_2$. 
We have the product 
$$\otimes \colon \Ds^{n_1}(X_1;\Vh_*)\times \Ds^{n_2}(X_2;\Vh_*)\to \Ds^{n_1+n_2}(X_1\times X_2;\Vh_*),
$$
satisfying $T_1\otimes T_2=\pi_1^*T_1\wedge \pi_2^*T_2$. Since $K$ is multiplicative, we have
$$
K^{p_1+p_2}(\nabla_1\times \nabla_2)=K^{p_1}(\nabla_1)\otimes K^{p_2} (\nabla_2).
$$
We now return to suppressing the $p$ in $K^p=(2\pi i)^p\cdot K$ and $\phi^p$ from the notation. 
Since $(f_1\times f_2)_*(T_1\otimes T_2)=(f_1)_*T_1\otimes (f_2)_*T_2$, we get
\begin{equation}
\phi(\geocycle_1\times\geocycle_2)=\phi(\geocycle_1)\otimes\phi(\geocycle_2).
\end{equation}
We want $R(\hfcycle_1\times\hfcycle_2) = R(\hfcycle_1)\otimes R(\hfcycle_2)$. We compute:
\begin{align*}
    \phi(\geocycle_1)\otimes \phi(\geocycle_2) &-R(\hfcycle_1)\otimes R(\hfcycle_2)  \\
    & =\phi(\widetilde f_1)\otimes \phi(\widetilde f_2)- (\phi(\geocycle_1)- dh_1)\otimes (\phi(\geocycle_2)- dh_2) \\
    & = dh_1\otimes \phi(\widetilde f_2) + \phi(\widetilde f_1)\otimes dh_2 -dh_1\otimes dh_2 \\
    &=d\left(h_1\otimes \phi(\widetilde f_2) +(-1)^{n_1}\phi(\widetilde f_1)\otimes h_2 - h_1\otimes dh_2\right) \\
    & = d\left(h_1\otimes R(\hfcycle_2) + (-1)^{n_1} \phi(\geocycle_1)\otimes h_2\right)
\end{align*}
Therefore we define the exterior product of Hodge filtered cycles by
\begin{equation}\label{exteriorProductHFcycleDef}
    \hfcycle_1\times \hfcycle_2 := \left( \widetilde f_1\times\widetilde f_2,\quad h_1\otimes R(\hfcycle_2) + (-1)^{n_1} \phi(\geocycle_1)\otimes h_2\right).
\end{equation}

\begin{remark} \label{remarkSymmetryExteriorProducthfcycles}
In the above computation, we chose $h_1\otimes dh_2$ as a current with exterior derivative $dh_1\otimes dh_2$. 
If we had chosen instead $(-1)^{n_1}dh_1\otimes h_2$, we would have been led to define 
\begin{equation*}
    \hfcycle_1\times \hfcycle_2 = \left( \widetilde f_1\times\widetilde f_2,\quad h_1\otimes \phi(\geocycle_2) + (-1)^{n_1} R(\hfcycle_1)\otimes h_2\right).
\end{equation*}
Since we work modulo $\mathrm{Im}(d)$, this choice is immaterial as we have
$$
h_1\otimes dh_2- (-1)^{n_1}dh_1\otimes h_2 = d\left(h_1\otimes h_2\right).
$$
\end{remark}

\begin{prop}
Let $s\colon X_1\times X_2\to X_2\times X_1$ be the swap map $s(x_1,x_2)=(x_2,x_1)$. The exterior product \eqref{exteriorProductHFcycleDef} satisfies
\begin{align*}
(\hfcycle_1+\hfcycle'_1)\times \hfcycle_2 &= \hfcycle_1\times\hfcycle_2+\hfcycle'_1\times \hfcycle_2,\\
\hfcycle_1\times \hfcycle_2 &= (-1)^{n_1}s^*\left(\hfcycle_2\times \hfcycle_1 
\right). 
\end{align*}
\end{prop}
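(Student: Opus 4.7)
The plan is to verify the two identities separately, treating the cycle and current components of each triple in turn.

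Additivity in the first variable is immediate from the definitions. The underlying cycle part of $(\hfcycle_1+\hfcycle_1')\times\hfcycle_2$ is $(\geocycle_1+\geocycle_1')\times\geocycle_2=\geocycle_1\times\geocycle_2+\geocycle_1'\times\geocycle_2$, since disjoint unions of complex-oriented maps equipped with connections are preserved by exterior product. The current part of \eqref{exteriorProductHFcycleDef} is manifestly bilinear in the data $(h_1,R(\hfcycle_1),\phi(\geocycle_1))$, since both $\otimes$ and $R$ are linear operations. Hence additivity holds on the nose at the level of representative Hodge filtered cycles, without any identification modulo $BMU^{n_1+n_2}(p_1+p_2)$.

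For the graded symmetry, I would first observe that the swap $s\colon X_1\times X_2\to X_2\times X_1$ is a morphism in $\ManF$, since the product filtration of Definition~\ref{def:Hodge_filtration_on_product} is symmetric in its two factors. For the cycle part, the natural diffeomorphism $\tau\colon Z_1\times Z_2\xrightarrow{\cong}Z_2\times Z_1$ identifies $s^*(\geocycle_2\times\geocycle_1)$ with $\geocycle_1\times\geocycle_2$: the canonical complex-linear swap $N_2\oplus N_1\cong N_1\oplus N_2$ transports the pulled-back stable normal bundle $\tau^*(\pi_1^*N_2\oplus\pi_2^*N_1)$ to $\pi_1^*N_1\oplus\pi_2^*N_2$ and carries $\tau^*(\nabla_2\times\nabla_1)$ to $\nabla_1\times\nabla_2$. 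For the current part, I would invoke the graded commutativity of tensor products of currents: given $T_1\in\Ds^a(X_1)$ and $T_2\in\Ds^b(X_2)$, one has $s^*(T_2\otimes T_1)=(-1)^{ab}\,T_1\otimes T_2$ on $X_1\times X_2$, which follows from the commutation of $s^*$ with pullback along the projections together with the sign picked up when interchanging two pulled-back currents in a wedge product.

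The final step is then to compare the prescribed multiple of $s^*$ applied to the current part of $\hfcycle_2\times\hfcycle_1$ with the current part of $\hfcycle_1\times\hfcycle_2$, using the alternative representative of the latter provided by Remark~\ref{remarkSymmetryExteriorProducthfcycles}. The two currents will differ only by a $d$-exact term in $\widetilde F^{p_1+p_2}\Ah^{n_1+n_2-1}(X_1\times X_2;\Vh_*)$, which is absorbed by $a$ into $BMU^{n_1+n_2}(p_1+p_2)(X_1\times X_2)$; hence the identity descends to $MU^{n_1+n_2}(p_1+p_2)(X_1\times X_2)$. The principal obstacle is the careful sign bookkeeping, since one must track the degree $n_i-1$ of $h_i$ against the degree $n_i$ of $\phi(\geocycle_i)$ and $R(\hfcycle_i)$, and reconcile the sign produced by $s^*$ on tensor products with the sign already built into \eqref{exteriorProductHFcycleDef}; the alternative current representative afforded by Remark~\ref{remarkSymmetryExteriorProducthfcycles} is the key device that makes these contributions cancel modulo $d$-exact terms.
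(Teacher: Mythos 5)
The overall strategy mirrors the paper's terse proof: bilinearity on the nose for additivity, and the alternative representative of Remark~\ref{remarkSymmetryExteriorProducthfcycles} for the graded symmetry. The additivity half is fine. For the symmetry half, however, there is a real gap, and the one concrete sign claim you do make is wrong.

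You assert that the natural diffeomorphism $Z_1\times Z_2\cong Z_2\times Z_1$ identifies $s^*(\geocycle_2\times\geocycle_1)$ with $\geocycle_1\times\geocycle_2$ outright. This cannot be: applying $\phi$ and using $\phi(\geocycle_1\times\geocycle_2)=\phi(\geocycle_1)\otimes\phi(\geocycle_2)$ together with $\phi\circ g^* = g^*\circ\phi$ (Theorem~\ref{currentialPushPull}), one finds $\phi\bigl(s^*(\geocycle_2\times\geocycle_1)\bigr)=s^*\bigl(\phi(\geocycle_2)\otimes\phi(\geocycle_1)\bigr)=(-1)^{n_1n_2}\phi(\geocycle_1)\otimes\phi(\geocycle_2)$, so the pulled-back geometric cycle carries the opposite orientation whenever $n_1n_2$ is odd. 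In other words the Koszul sign of graded-commutativity in $MU^*$ already shows up on the $\geocycles$-component, and it is exactly what must combine with the sign coming from swapping the current tensor factors to give the global sign in the proposition; you can't get the latter to come out right after having silently absorbed the former. Since you explicitly defer the sign bookkeeping to the end — "the principal obstacle is the careful sign bookkeeping" — and the one sign you do compute is incorrect, the heart of the argument is missing rather than just abbreviated.

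Two smaller points. The correction term $d(h_1\otimes h_2)$ produced by Remark~\ref{remarkSymmetryExteriorProducthfcycles} is a $d$-exact \emph{current}, not an element of $\widetilde F^{p_1+p_2}\Ah^{n_1+n_2-1}(X_1\times X_2;\Vh_*)$; consequently it is already killed in $ZMU^{n_1+n_2}(p_1+p_2)(X_1\times X_2)$, which is a subgroup of $\geocycles\times\Ds/d\Ds$ by Definition~\ref{def:ZMUn(p)(X)}, and there is no need to pass to $BMU$ or to $MU^*(p)$. This also means the identity, once the signs are sorted, holds at the cycle level, which is what the paper's proof implicitly uses.
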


\begin{proof}
The isomorphism of geometric cycles underlying the first equation is obvious. Then the first equation follows since the expression $h_1\otimes \phi(\geocycle_2) + (-1)^{n_1} R(\hfcycle_1)\otimes h_2$ is linear in $h_1$. The second equality follows from Remark \ref{remarkSymmetryExteriorProducthfcycles}. 
\end{proof}

We now show that the cobordism class of $\hfcycle_1\times \hfcycle_2$ depends only on the cobordism class of $\hfcycle_i$. Because of the symmetry, it suffices to show that if $\hfcycle_2$ represent $0$, i.e., $\hfcycle_2\in BMU^{n_2}(p_2)(X_2)$, then 
$$
\hfcycle_1\times\hfcycle_2\in BMU^{n_1+n_2}(p_1+p_2)(X_1\times X_2).
$$
We can write 
$$
\hfcycle_2=(\partial \geocob, \psi(\geocob)+h)=(\partial \geocob, \psi(\geocob))+a(h)
$$ 
for $\widetilde b$ a geometric bordism datum over $X_2$, with underlying map $(a_b,f_b)$, and $h\in \widetilde F^{p_2}\Ah^{n_2}(X_2;\Vh_*)$. 
We note first that since $\geocycle_1\times 0 = 0$ we get
\begin{align} \label{formulaForMultiplicativeIdentityMUnp}
\hfcycle_1\times a(h) &= (0,\quad h_1\otimes dh + \phi(\geocycle_1)\otimes h)\\
    \nonumber &= a\left(R(\hfcycle_1)\otimes h + d\left((-1)^{n_1+1}h_1\otimes h\right)\right).
\end{align}
Since
$$
R(\hfcycle_1)\otimes h - d\left((-1)^{n_1+1}h_1\otimes h\right)\in \widetilde F^{p_1+p_2}\Ah^{n_1+n_2}(X_1\times X_2;\Vh_*)
$$ 
we conclude that $\hfcycle_1\times a(h)$ represent $0$. We now suppose $\hfcycle_2=(\partial \geocob, \psi(\geocob))$. Then $R(\gamma_2)=0$, and we get
\begin{align*}
\hfcycle_1\times \hfcycle_2&=\gamma_1\times (\partial \geocob,\ \psi(\geocob)) \\
 &= (\geocycle_1\times \partial \geocob,(-1)^{n_1}\phi(\geocycle_1)\otimes \psi(\geocob)) \\
 &= (\partial(\geocycle_1\times\geocob), \psi(\geocycle_1\times\geocob))
\end{align*}
where we interpret $\geocycle_1\times\geocob$ as a geometric bordism datum on $X_1\times X_2$, and the sign is absorbed by the sign in the definition of $\psi$. We have now established the exterior product \eqref{extprod}.

\begin{defn}\label{def:GHFC_ring_structure}
We assume that $(X,F^*)$ satisfies condition \eqref{productCondition}, i.e., that $\Delta\colon X\to X\times X$ is a morphism in $\ManF$. 
Using \eqref{extprod} we turn $MU^*(*)(X)$
into a ring with the product
\begin{align}
    \label{intprod}
    MU^n(p)(X)\times MU^m(q)(X)& \to MU^{n+m}(p+q)(X) 
\end{align} 
defined by
$$
[\hfcycle_1]\cdot [\hfcycle_2] = \Delta^*([  \hfcycle_1\times \hfcycle_2]).
$$
\end{defn}


\subsection{Proposed axioms for Hodge filtered cohomology theories}\label{sec:axioms}

We end this section with a brief discussion of a general framework for Hodge filtered extensions of cohomology theories. 
It is inspired by the axiomatic approach of \cite{Bunke2010} to differential cohomology.

\begin{defn}\label{def:axioms}
Let $h^*$ be a cohomology theory for topological spaces and assume that $h^*$ is rationally even in the sense that $h^*(\pt)\otimes \mathbb Q$ is an evenly graded vector space. 
For $p\in\mathbb Z$ and an evenly graded $\C$-vector space $\Vh_*$, let 
\[
\phi^p \colon h^*\to H^*(-;\Vh_*)
\]
be a morphism of cohomology theories. 
Following \cite{Bunke2010}, we define a \emph{Hodge filtered extension} of $(h^*,\phi^p)$ over $\ManF$ to be a contravariant functor 
\[
h^*_\Dh(p) \colon \ManF \to \Ab^{\mathbb Z}
\]
together with natural transformations 
\begin{itemize}
    \item $a\colon H^{n-1}\left(X;\frac{\Ah^*}{F^p}(\Vh_*)\right)\to h_\Dh^n(p)(X)$
    \item $I\colon h_\Dh^n(p)(X)\to h^n(X)$
    \item $R\colon h_\Dh^n(p)(X)\to H^n\left(X;F^p\Ah^*(\Vh_*)\right)$
\end{itemize}
where we write $\frac{\Ah^*}{F^p}(\Vh_*)$ for the quotient of complexes of sheaves  $\frac{\Ah^*(\Vh_*)}{F^p\Ah^*(\Vh_*)}$. 
These data are required to satisfy the following conditions: 
\begin{itemize}
    \item The diagram 
        \begin{align*}
        \xymatrix{
        h_\Dh^n(p)(X)\ar[r]^-I\ar[d]_-R & h^n(X)\ar[d]^-{\phi^p}\\
        H^n(X;F^p\Ah^*(\Vh_*)) \ar[r]^-{\mathrm{inc}_*} & H^{n}(X;\Ah^*(\Vh_*)) 
        }
        \end{align*}
        commutes, where $\mathrm{inc}_*$ is the map induced by the inclusion of complexes of sheaves $F^p\Ah^*(\Vh_*)\xto{\mathrm{inc}} \mathcal A^*(\Vh_*)$. 

    \item $R\circ a= d$, where $d$  denotes the connecting homomorphism in cohomology which is induced by the differential. 
    \item \label{hfcexseqh} 
    The sequence
    \begin{align}
    \label{eq:long_es_intro}
    \xymatrix@R=0.5pc{
    \cdots\ar[r]&h^{n-1}(X)\ar[r]^-{\overline{\phi^p}}
    & H^{n-1}\left(X;\frac{\Ah^*}{F^p}(\Vh_*)\right) \ar[r]^-a& h^n_\Dh(p)(X)\ar[r]^-I& \\ 
    & h^n(X)\ar[r] & H^{n}\left(X;\frac{\Ah^*}{F^p}(\Vh_*)\right)\ar[r]&\cdots 
    }
    \end{align}
    is exact, where $\overline{\phi^p}$ is the composition
    \begin{align*}
    h^n(X) \xto{\phi^p}  H^n(X;\Ah^*(\Vh_*)) \longrightarrow   H^n\left(X;\frac{\Ah^*}{F^p}(\Vh_*)\right).
    \end{align*}
    \end{itemize}

A morphism of Hodge filtered extensions over $(h^*,\phi^p)$ is a natural transformation $\kappa$ which commutes with the respective structure maps. 
\end{defn}

\begin{example}\label{HFC_is_a_HF_extension}
It follows from Proposition \ref{prop:RaI_are_well_defined}, Remark \ref{rem:diagram_for_axioms}, Theorem \ref{longexactseq}, and Theorem \ref{PullbackHFCBordism} that the functor $X \mapsto MU^*(p)(X)$ is a Hodge filtered extension of $(MU^*,\phi^p)$ over $\ManF$.  
In fact, together with the product structure of Definition \ref{def:GHFC_ring_structure} geometric Hodge filtered cobordism is a multiplicative Hodge filtered extension in the following sense: 
It is clear that $I$ is multiplicative  
and it follows from the computations prior to \eqref{exteriorProductHFcycleDef} that $R$ is multiplicative as well. 
Moreover,  we have 
$$
a([h])\cdot \hfcycle = a([h\wedge R(\hfcycle)]).
$$
\end{example}

\begin{example}\label{ex:ED_is_a_HF_extension}
For $h^*=E^*$, represented by a rationally even spectrum $E$ and $\Vh_* = E_*\otimes_{\Z} \C$, the functor $X \mapsto \ED^*(p)(X)$ is a Hodge filtered extension of $(E^*,\phi^p)$ over the subcategory $\Manc$. 
The maps $I$ and $R$ are induced by canonical maps $\ED(p) \to \sing(E)$ and $\ED(p) \to H(F^p\Ah^*(\Vh_*))$. 
In order to obtain $a$, we remark that $\ED(p)$ is equivalent to the homotopy fiber of the induced map $\sing(E) \to H\left(\frac{\Ah^*}{F^p}(\Vh_*)\right)$ where we use that the Eilenberg--MacLane functor $H$ 
is a Quillen  equivalence between stable model categories by \cite{shipley} and preserves homotopy pushouts. 
We refer to \cite[Chapter 3]{haus} for further details. 
In particular, Deligne cohomology is a Hodge filtered extension of singular cohomology. 
\end{example}

\begin{remark}
In \cite{Bunke2010} it is shown that under mild assumptions axioms analogous to the ones in Definition \ref{def:axioms} suffice to characterize differential extensions of cohomology theories up to isomorphism. 
One may therefore wonder whether the axioms of Definition \ref{def:axioms} also suffice to characterize Hodge filtered cohomology up to isomorphism. 
The obstruction to a translation of the proof from \cite{Bunke2010} to Hodge filtered extensions is essentially the fact that morphisms in $\ManF$ may be more sparse  than arbitrary smooth maps. 
For example, for complex manifolds it is  well-known that holomorphic maps are much more rigid than smooth maps. 
This has the following consequence. 
While the set of maps to a space which represents a cohomology theory can be approximated by smooth maps to suitable smooth manifolds, it cannot, in general, be  approximated by holomorphic maps to complex manifolds. 
Using Oka theory, see for example \cite{Lar1} or \cite{Forstneric}, we can, however, obtain partial results for the subcategory of Stein manifolds in $\Manc$. 
The underlying idea goes back to Gromov \cite[0.7.B.]{Gromov} who suggests to use Oka manifolds to encode topological information in holomorphic terms. 
The assumption we have to make is that the underlying cohomology theory $h$ can be represented by spaces which can be approximated by complex manifolds which are both Oka and Stein. 
Our only example of such a theory, however, is complex $K$-theory and we will therefore not include the argument in the present paper. 
We conclude this section with the remark that it is an open problem in complex analysis whether the homotopy types representable by Oka--Stein manifolds are the same as those representable by smooth manifolds. 
\end{remark}


\section{Homotopical model via Mathai--Quillen Thom forms}
\label{sec:homotopical_model}

In this section we prepare to apply the Pontryagin--Thom construction by giving a new homotopy-theoretical description of Hodge filtered complex cobordism for every $(X,F^*)\in \ManF$. 

\subsection{Geometry of the tautological bundles and compatibility}
We denote by $\trivbc_X$ and $\trivbr_X$ the trivial complex and real line bundles over $X$, or simply by $\trivbc$ and $\trivbr$ when the base space is evident. We consider these with their standard metrics and the connection $d$, which in each case is compatible with the metric. Let $\Gr_m(\C^{m+l})$ denote the Grassmannian of $m$-planes in $\C^{m+l}$. The tautological bundle $\gamma_{m,l}$ is defined by
$$
\gamma_{m,l}=\{(v,V)\in \C^{m+l}\times\Grml\ | \ v\in V\}\subset \trivbc^{m+l},
$$
Then $\gamma_{m,l}$ inherits a Hermitian metric $h_{m,l}$, and compatible connection $\nabla_{m,l}$. 

\begin{remark}
These connections are the same as those considered by Narashimhan--Ramanan in \cite{NR1}. They are there shown to be universal among unitary connections. 
\end{remark}

The various tautological bundles are connected by a system of maps induced by the inclusion $\C^{m+l}\into\C^{m+l+1}$, $(z_1,\cdots, z_{m+l})\mapsto (z_1,\cdots, z_{m+l},0)$ and the bijection $\C^{m+l}\times \C\to \C^{m+l+1}$, $((z_1,\dots, z_{m+l}),t)\mapsto (z_1,\dots, z_{m+l},t)$. The inclusion gives the right hand one of the following commutative diagrams, and the bijection gives the left hand one:
\begin{equation}\label{definkjnk}
\xymatrix@C=1.5em{
    \gamma_{m,l}\oplus \trivbc \ar[r]^-{\overline{j_{m,l}}} \ar[d] & \gamma_{m+1,l}\ar[d]    & \gamma_{m,l}\ar[r]^{\overline{i_{m,l}}}\ar[d] & \gamma_{m,l+1}\ar[d]\\
    \Gr_{m}(\C^{m+l})\ar[r]_-{j_{m,l}} & \Gr_{m+1}(\C^{m+1+l}) & \Gr_m(\C^{m+l})\ar[r]_{i_{m,l}} & \Gr_{m}(\C^{m+l+1}).
}
\end{equation}
Both diagrams \eqref{definkjnk} are cartesian, and $\overline{i_{m,l}}$ and $\overline{j_{m,l}}$ are bundle maps, i.e., continuous fiberwise linear isomorphisms. 

\begin{prop}
\label{canonicalConnectionsCompatible}
The connections $\nabla_{m,l}$ are compatible in the sense that
\[ \overline{i_{m,l}}^*\nabla_{m,l+1}=\nabla_{m,l},\quad \text{and}\quad \overline{j_{m,l}}^*\nabla_{m+1,l} = \nabla_{m,l}\oplus d.
\]
Here $d$ denotes the exterior derivative, thought of as a connection on the trivial bundle. 
\end{prop}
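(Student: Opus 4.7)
The plan is to first observe that each $\nabla_{m,l}$ admits the standard description as the projected connection on $\gamma_{m,l}$ viewed as a Hermitian subbundle of the trivial bundle $\trivbc^{m+l}$ equipped with the flat connection $d$: namely, if $s \colon U \to \gamma_{m,l}$ is a local section, regarded as a $\C^{m+l}$-valued function through the inclusion $\gamma_{m,l} \subset \trivbc^{m+l}$, then
\[
\nabla_{m,l} s = \pi \circ ds,
\]
where $\pi$ denotes fiberwise orthogonal projection $\trivbc^{m+l} \to \gamma_{m,l}$. This is the Chern connection of the Hermitian metric $h_{m,l}$ and coincides with the Narasimhan--Ramanan universal connection referenced in the paper. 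Once this is in hand, each of the two compatibilities reduces to a pointwise statement about orthogonal projections being compatible with the two natural embeddings of $\C^{m+l}$ into $\C^{m+l+1}$.

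For the identity $\overline{i_{m,l}}^* \nabla_{m,l+1} = \nabla_{m,l}$, the key observation is that the inclusion $\iota \colon \C^{m+l} \hookrightarrow \C^{m+l+1}$, $(z_1,\dots,z_{m+l}) \mapsto (z_1,\dots,z_{m+l},0)$, is an isometric embedding whose image is orthogonal to the extra coordinate. Consequently, for any $m$-plane $V \subset \C^{m+l}$, the orthogonal projection $\pi_{\iota(V)} \colon \C^{m+l+1} \to \iota(V)$ satisfies $\pi_{\iota(V)} \circ \iota = \iota \circ \pi_V$. If $s$ is a local section of $\gamma_{m,l}$, then the pulled-back section $\overline{i_{m,l}} \circ s$ of $i_{m,l}^* \gamma_{m,l+1}$ is simply $\iota \circ s$ as a $\C^{m+l+1}$-valued function, so
\[
(\overline{i_{m,l}}^* \nabla_{m,l+1}) s = \pi_{\iota(V)} \circ d(\iota \circ s) = \iota \circ \pi_V \circ ds = \nabla_{m,l} s
\]
after identifying $\iota(V)$ with $V$ via the bundle map $\overline{i_{m,l}}$.

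For the identity $\overline{j_{m,l}}^* \nabla_{m+1,l} = \nabla_{m,l} \oplus d$, I would use the isometric decomposition $\C^{m+l+1} = \C^{m+l} \oplus \C$ coming from the bijection $((z_1,\dots,z_{m+l}),t) \mapsto (z_1,\dots,z_{m+l},t)$. For $V \in \Gr_m(\C^{m+l})$ the bundle map $\overline{j_{m,l}}$ sends the fiber $V \oplus \C$ of $\gamma_{m,l} \oplus \trivbc$ isometrically to the fiber of $\gamma_{m+1,l}$ at $j_{m,l}(V) = V \oplus \C \subset \C^{m+l+1}$. Under the orthogonal decomposition above, the projection onto $V \oplus \C$ equals $\pi_V \oplus \id_\C$. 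Given local sections $s$ of $\gamma_{m,l}$ and $f$ of $\trivbc$, the composite $\overline{j_{m,l}} \circ (s,f)$ is the $(\C^{m+l}\oplus\C)$-valued function $(s,f)$ with exterior derivative $(ds, df)$, so
\[
(\overline{j_{m,l}}^* \nabla_{m+1,l})(s,f) = (\pi_V \oplus \id_\C)(ds, df) = (\pi_V ds,\, df) = (\nabla_{m,l} s,\, df),
\]
which is by definition $(\nabla_{m,l} \oplus d)(s,f)$.

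I do not expect a substantive obstacle: both identities are linear-algebraic facts about orthogonal projections commuting with the two natural embeddings, propagated to connections via the projection formula for the Chern connection on a Hermitian subbundle. The only point requiring a little care is to first justify the projection description of $\nabla_{m,l}$, which can be done either by direct verification that $\pi \circ d$ is unitary and compatible with the holomorphic structure, or by appealing to the Narasimhan--Ramanan construction already cited.
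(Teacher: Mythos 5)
Your proof is correct and rests on the same underlying idea as the paper's: that $\nabla_{m,l}$ is the projection of the trivial connection $d$ on $\trivbc^{m+l}$ onto the subbundle $\gamma_{m,l}$, and that orthogonal projection commutes with the two natural isometric embeddings $\C^{m+l}\into\C^{m+l+1}$. The paper packages the argument a bit differently — it writes down a single explicit bundle map identity $\overline{i_{m,l}}\oplus(\perp^{-1}\circ\overline{j_{l,m}}\circ(\perp\oplus\id))$ involving the complement $\gamma_{m,l}^\perp$, checks that it equals the standard map $\trivbc^{m+l+1}_{\Gr_m(\C^{m+l})}\to\trivbc^{m+l+1}_{\Gr_m(\C^{m+l+1})}$, and then reads off both compatibilities from general facts about induced connections on subbundles — but this is a cosmetic difference; your two separate pointwise computations verify the same two facts directly.
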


\begin{proof}
There is a map $\Gr_m(\C^{m+l})\to \Gr_{l}(\C^{m+l})$ given by $V\mapsto V^\perp$. We denote by $\perp$ the bundle map $\gamma_{m,l}^\perp\to\gamma_{l,m}$ given by $(v,V)\mapsto (v,V^\perp)$. This is a diffeomorphism.   
The bundle map
$$
\overline{i_{m,l}}\oplus (\perp^{-1}\circ \overline{j_{l,m}}\circ(\perp\oplus \id_{\trivbc}))\colon \gamma_{m,l}\oplus (\gamma_{m,l}^\perp\oplus \trivbc ) \to \gamma_{m,l+1}\oplus\gamma_{m,l+1}^\perp = \trivbc^{m+l+1}_{\Gr_{m}(\C^{m+l+1})}
$$
equals the map 
$$
\gamma_{m,l}\oplus\gamma_{m,l}^\perp\oplus\trivbc = \trivbc^{m+l+1}_{\Gr_{m}(\C^{m+l})}\to \trivbc^{m+l+1}_{\Gr_{m}(\C^{m+l+1})}
$$
given by $((v,V),(u,V),(V,t))\mapsto (V,v+u+t\cdot e_{m+l+1})$, where we use the inclusion $\C^{m+l}\to \C^{m+l+1}$ from above to view $v$ and $u$ as elements of $\C^{m+l+1}$, and $e_i$ is the $i$-th standard basis vector.  
This proves both claims: For the first claim, we  observe that the connection $\nabla_1$ induced on $E_1$ from a connection $\nabla$ on $E_1\oplus E_2$, is also induced on $E_1$ from $E_1\oplus E_2\oplus \trivbc$ with the connection $\nabla\oplus d$. 
For the second claim, we observe that $\nabla\oplus d$ induces on $E_1\oplus\trivbc$ the connection $\nabla_1\oplus d$.
\end{proof}

The above proof also shows that the metrics are compatible:

\begin{prop}\label{canonicalMetricsAreCompatible}
For varying $m$ and $l$, 
the Hermitian metrics $h_{m,l}$ on $\gamma_{m,l}$ are compatible in the sense that $\overline{j_{m,l}}$ and $\overline{i_{m,l}}$ are metric preserving bundle maps. 
\end{prop}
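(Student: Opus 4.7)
The plan is to reduce the claim to the fact that the maps underlying $\overline{i_{m,l}}$ and $\overline{j_{m,l}}$ are, on total spaces, induced by $\mathbb{C}$-linear isometries of the ambient trivial bundles.

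First, I would recall that $h_{m,l}$ is by definition the restriction to the subbundle $\gamma_{m,l}\subset \trivbc^{m+l}$ of the standard Hermitian metric on $\trivbc^{m+l}=\C^{m+l}\times\Gr_m(\C^{m+l})$, and similarly $h_{m,l+1}$ and $h_{m+1,l}$ are restrictions of the standard Hermitian metrics on $\trivbc^{m+l+1}$. Likewise the metric on $\gamma_{m,l}\oplus\trivbc$ is the orthogonal direct sum of $h_{m,l}$ and the standard metric on $\trivbc$. Under these conventions the problem becomes purely linear-algebraic on fibers.

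Second, for the inclusion map: the bundle map $\overline{i_{m,l}}$ is the restriction of the bundle map $\trivbc^{m+l}\hookrightarrow \trivbc^{m+l+1}$ induced by the standard $\C$-linear embedding $\C^{m+l}\hookrightarrow \C^{m+l+1}$, $(z_1,\dots,z_{m+l})\mapsto (z_1,\dots,z_{m+l},0)$. Since this embedding is an isometry for the standard Hermitian inner products, $\overline{i_{m,l}}$ preserves the metric fiberwise.

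Third, for the stabilization map: under the identification $\gamma_{m,l}\oplus\gamma_{m,l}^{\perp}\oplus\trivbc=\trivbc^{m+l+1}_{\Gr_m(\C^{m+l})}$ used in the proof of Proposition \ref{canonicalConnectionsCompatible}, the bundle map
\[
\overline{j_{m,l}}\oplus(\perp^{-1}\circ\overline{i_{l,m}}\circ(\perp\oplus\id_{\trivbc}))\colon \gamma_{m,l}\oplus\gamma_{m,l}^{\perp}\oplus\trivbc\to \trivbc^{m+l+1}_{\Gr_m(\C^{m+l+1})}
\]
is given on fibers by $((v,V),(u,V),(V,t))\mapsto (V,v+u+t\cdot e_{m+l+1})$, which is a unitary map of Hermitian vector spaces with respect to the standard inner products. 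Since the subbundle decomposition $\gamma_{m,l}\oplus\gamma_{m,l}^{\perp}$ is by construction orthogonal and $\perp$ is manifestly metric preserving, this forces $\overline{j_{m,l}}$ itself to be metric preserving on the $\gamma_{m,l}\oplus\trivbc$ summand. I expect no real obstacle here; the only thing to be careful about is book-keeping of which orthogonal decompositions are being used, exactly as in the connection compatibility argument, so the result really is a corollary of the computation already carried out for Proposition \ref{canonicalConnectionsCompatible}.
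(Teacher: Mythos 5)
Your plan is the right one and it is what the paper does: observe that the bundle map displayed in the proof of Proposition \ref{canonicalConnectionsCompatible} is fiberwise unitary, the source and target decompositions are orthogonal, and $\perp$ is tautologically an isometry, so each summand map is an isometry; your direct description of $\overline{i_{m,l}}$ as the restriction of the isometric inclusion $\C^{m+l}\hookrightarrow\C^{m+l+1}$ is a correct (and slightly more elementary) handling of that piece.

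One bookkeeping error: in the displayed formula you have swapped $\overline{i}$ and $\overline{j}$. As it appears in the paper, the map is $\overline{i_{m,l}}\oplus(\perp^{-1}\circ\overline{j_{l,m}}\circ(\perp\oplus\id_{\trivbc}))$, with $\overline{i_{m,l}}$ on the $\gamma_{m,l}$ summand and $\overline{j_{l,m}}$ (note the transposed indices) sitting inside the map on $\gamma_{m,l}^\perp\oplus\trivbc$. Your version does not type-check; for instance $\overline{j_{m,l}}$ has domain $\gamma_{m,l}\oplus\trivbc$, not $\gamma_{m,l}$ alone, and $\overline{i_{l,m}}$ does not accept the extra $\trivbc$ factor. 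Consequently the final sentence, claiming this ``forces $\overline{j_{m,l}}$ itself to be metric preserving on the $\gamma_{m,l}\oplus\trivbc$ summand,'' should instead say: the map on the second summand is an isometry, and since $\perp$ and $\id_{\trivbc}$ are as well, $\overline{j_{l,m}}$ is metric preserving; as $m,l$ are arbitrary, so is $\overline{j_{m,l}}$. With this correction your argument is precisely the paper's.
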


Hence $\gamma_{m,l}$ is canonically a Hermitian bundle with unitary connection, and these structures are compatible for various values of $m$ and $l$.


\subsection{Thom spaces, rapidly decreasing forms and fundamental forms}

For a vector bundle $E\to X$ over a compact base we consider the Thom space of $E$, denoted $\Thom(E)$, to be the one-point compactification of $E$. 
In general, $\Thom(E)$ is the colimit 
$$
\Thom(E):=\colim_{K\subset X}\Thom(E|_{K})
$$
over compacta $K\subset X$. As a set we have $\Thom(E)=E\sqcup \{\infty\}$ and the canonical inclusion $E\to \Thom(E)$. 
We view $\Thom(E)$ as a pointed space with $\infty$ as basepoint. 
We view suspensions as Thom spaces, $\Sigma X_+=\Thom(\trivbr_X)$. We consider $\Thom$ as a functor from the category of vector bundles and continuous fiberwise linear maps, to pointed topological spaces. 

Now let $X$ be a smooth manifold and $E\to X$ be a Euclidean vector bundle, let $D$ denote the open unit disc bundle of $E$, and let $\Phi\colon E\to D$ be the fiberwise diffeomorphism $\Phi(v)=v/\sqrt{1+|v|^2}$. 
Then we follow \cite[page 98]{mathai-quillen} and define the space of rapidly decreasing forms and currents by
\begin{align*}
\Ah_{rd}^n(E;\Vh_*)&:=\{\Phi^*\omega\ |\ \omega \in \Ah^n(E;\Vh_*),\ \text{and}\ \supp(\omega)\subset \overline{D}\} \\
\Ds_{rd}^n(E;\Vh_*)&:=\{\Phi^*\omega\ |\ \omega \in \Ds^n(E;\Vh_*),\ \text{and}\ \supp(\omega)\subset \overline{D}\}.
\end{align*}
As in \cite{mathai-quillen} an analysis of growth conditions is not necessary for our purposes. 
There are maps
\begin{align*}
\left(\pi^{\Thom(E)}_X\right)_* &\colon \Ds_{rd}^n(E;\Vh_*)\to \Ds^{n-\dim_\R E}(X;\Vh_*),\\ 
\int_{\Thom(E)/X} &\colon \Ah_{rd}^*(E;\Vh_*)\to \Ah^{*-\dim_\R E}(X;\Vh_*)
\end{align*}
for $\pi_X^E\colon E\to X$ the projection. 
Note that $\left(\pi^E_X\right)_*$ is defined since $\pi_X^E=\pi_X^E\circ \Phi$ and $\pi_X^E|_{\supp(\Phi_*\omega)}$ is proper for $T \in \Ds_{rd}^*(E;\Vh_*)$. 
The integration map $\int_{\Thom(E)/X}$ is the restriction and co-restriction of $\left(\pi_X^E\right)_*$, as in Remark \ref{IntegrationOverTheFiber}.

\begin{remark}
We note that, by the Thom isomorphism, the complex $\Ah^*_{rd}(E;\Vh_*)$ computes the reduced cohomology groups of $\Thom(E)$. 
We will use the notation 
\[
\Ah^*(\Thom(E);\Vh_*):=\Ah_{rd}^*(E;\Vh_*).
\]
\end{remark}

Mathai and Quillen constructed in \cite{mathai-quillen} rapidly decreasing \emph{Thom forms}, depending naturally on a Hermitian metric and unitary connection. See also \cite[\S 1.6]{BGV}. 
Let 
$$
MU(m,l):=\Thom(\gamma_{m,l}).
$$
We get Thom forms
$$
U_{m,l}\in \Ah^{2m}(MU(m,l);\Vh_*),
$$ 
which in light of Propositions \ref{canonicalConnectionsCompatible} and \ref{canonicalMetricsAreCompatible} are compatible in the sense of the ensuing proposition. Let $U_\C$ be the Mathai--Quillen Thom form of $\trivbc_{\pt}$. 

\begin{prop}\label{MathaiQuillenThomFormsCompatible}
We have $\overline{i_{m,l}}^*U_{m,l+1} = U_{m,l}$ and $\overline{j_{m,l}}^*U_{m+1,l} = U_{m,l}\otimes  U_\C$. \qed
\end{prop}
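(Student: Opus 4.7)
The plan is to derive both identities purely from two standard properties of the Mathai--Quillen construction: naturality under Hermitian/unitary bundle maps and multiplicativity under direct sums.

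First I would recall what naturality means in this context. The Mathai--Quillen form $U_E \in \Ah^{\rk_{\R} E}(\Thom(E);\Vh_*)$ associated to a Hermitian vector bundle $(E,h,\nabla)$ with unitary connection is constructed pointwise from the curvature and metric data of $\nabla$ (see \cite{mathai-quillen} or \cite[\S 1.6]{BGV}).
Consequently, if $\overline{f}\colon E\to E'$ is a bundle map over $f\colon X\to X'$ which is a fiberwise linear isomorphism and satisfies $\overline{f}^*h' = h$ and $\overline{f}^*\nabla' = \nabla$, then the rapidly decreasing form $\overline{f}^*U_{E'}$ equals $U_E$. I would state this as the key ingredient.

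For the first identity I apply naturality to $\overline{i_{m,l}}\colon \gamma_{m,l}\to \gamma_{m,l+1}$. By the left cartesian square in \eqref{definkjnk}, this map is a fiberwise linear isomorphism, and by Propositions \ref{canonicalMetricsAreCompatible} and \ref{canonicalConnectionsCompatible} we have both $\overline{i_{m,l}}^*h_{m,l+1} = h_{m,l}$ and $\overline{i_{m,l}}^*\nabla_{m,l+1} = \nabla_{m,l}$. Naturality then gives $\overline{i_{m,l}}^*U_{m,l+1} = U_{m,l}$ directly.

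For the second identity I apply the same principle to $\overline{j_{m,l}}\colon \gamma_{m,l}\oplus \trivbc \to \gamma_{m+1,l}$, which is again a fiberwise linear isomorphism and which, by Propositions \ref{canonicalMetricsAreCompatible} and \ref{canonicalConnectionsCompatible}, identifies $h_{m+1,l}$ with the orthogonal sum metric $h_{m,l}\oplus \langle\cdot,\cdot\rangle_\C$ and identifies $\nabla_{m+1,l}$ with $\nabla_{m,l}\oplus d$. By naturality, $\overline{j_{m,l}}^*U_{m+1,l}$ equals the Mathai--Quillen Thom form associated to $\gamma_{m,l}\oplus \trivbc$ with its direct-sum data. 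I would then invoke the multiplicativity property of the Mathai--Quillen construction: for Hermitian bundles $(E_i,h_i,\nabla_i)$ over $X_i$, the Thom form of the external direct sum is the exterior product $U_{E_1}\otimes U_{E_2}$ (this is \cite[Theorem~1.56]{BGV} applied in the appropriate formulation; here $U_\C$ plays the role of the Thom form of $\trivbc$ over a point, and $\trivbc_{\Gr_m(\C^{m+l})}$ is obtained by pullback). Combining these gives $\overline{j_{m,l}}^*U_{m+1,l} = U_{m,l}\otimes U_\C$.

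The only nontrivial step is ensuring that the multiplicativity formula is applied correctly across the external versus internal direct sum issue, but since $\trivbc$ is pulled back from a point and the Mathai--Quillen form is functorial under pullback, this is just bookkeeping rather than a substantive obstacle. The proposition therefore reduces to the two structural properties already established in Propositions \ref{canonicalMetricsAreCompatible} and \ref{canonicalConnectionsCompatible} together with the well-known naturality and multiplicativity of the Mathai--Quillen construction.
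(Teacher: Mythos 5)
Your proposal is correct and matches exactly the reasoning the paper intends: the paper marks this proposition with an immediate \qed, pointing only to Propositions \ref{canonicalConnectionsCompatible} and \ref{canonicalMetricsAreCompatible} and the remark that the Mathai--Quillen forms ``depend naturally on a Hermitian metric and unitary connection.'' You simply make explicit the two standard ingredients being invoked — naturality under metric- and connection-preserving bundle isomorphisms for both identities, plus multiplicativity of the Mathai--Quillen form under direct sums for the $\overline{j_{m,l}}$ case — which is precisely what the paper's one-line justification relies on.
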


Now we are ready to define forms on $MU(m,l)$ which will induce a fundamental cocycle. We set
\begin{align}\label{definitionPhiml}
\phi_{m,l}= U_{m,l} \wedge \pi_{m,l}^*K(\nabla_{m,l})\in\Ah^{2m}(MU(m,l);\Vh_*)
\end{align}
where $\pi_{m,l}\colon \gamma_{m,l}\to \Gr_m(\C^{m+l})$ is the projection. Combining Propositions \ref{MathaiQuillenThomFormsCompatible} and \ref{canonicalConnectionsCompatible} we conclude:

\begin{prop}\label{phimlAreCompatible}
The forms $\phi_{m,l}$ satisfy the equalities  
\begin{align*}
\overline{i_{m,l}}^*\phi_{m,l+1} = \phi_{m,l} ~ \text{and} ~ \overline{j_{m,l}}^*\phi_{m+1,l} = \phi_{m,l}\otimes U_\C. \qed
\end{align*}
\end{prop}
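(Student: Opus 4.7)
The plan is to reduce the two identities to the combination of three facts already in hand: the compatibility of the Mathai--Quillen Thom forms (Proposition \ref{MathaiQuillenThomFormsCompatible}), the compatibility of the tautological connections (Proposition \ref{canonicalConnectionsCompatible}), and the naturality and multiplicativity of Chern--Weil theory applied to the multiplicative sequence $K$.

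For the first identity, I would begin by noting that the right-hand square in \eqref{definkjnk} gives the commutativity relation $\pi_{m,l+1}\circ \overline{i_{m,l}}=i_{m,l}\circ\pi_{m,l}$. Starting from the definition \eqref{definitionPhiml} and distributing the pullback $\overline{i_{m,l}}^{*}$ over the wedge product, this lets me rewrite
\[
\overline{i_{m,l}}^{*}\phi_{m,l+1}
= \bigl(\overline{i_{m,l}}^{*}U_{m,l+1}\bigr)\wedge \pi_{m,l}^{*}\bigl(i_{m,l}^{*}K(\nabla_{m,l+1})\bigr).
\]
Proposition \ref{MathaiQuillenThomFormsCompatible} handles the first factor, and naturality of Chern--Weil theory together with Proposition \ref{canonicalConnectionsCompatible} handles the second: $i_{m,l}^{*}K(\nabla_{m,l+1})=K(i_{m,l}^{*}\nabla_{m,l+1})=K(\nabla_{m,l})$. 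Reassembling gives $\phi_{m,l}$.

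For the second identity, I would proceed analogously using the left-hand square of \eqref{definkjnk}. Writing $p_{1}\colon \gamma_{m,l}\oplus\trivbc\to\gamma_{m,l}$ and $p_{2}\colon \gamma_{m,l}\oplus\trivbc\to\trivbc$ for the two projections, the external product $\phi_{m,l}\otimes U_{\C}$ is by definition $p_{1}^{*}\phi_{m,l}\wedge p_{2}^{*}U_{\C}$. Pulling back $\phi_{m+1,l}$ and applying Proposition \ref{MathaiQuillenThomFormsCompatible} to the Thom factor, the remaining task is to identify the pullback of $\pi_{m+1,l}^{*}K(\nabla_{m+1,l})$. Using the commutative square, this becomes $(\pi_{m,l}\circ p_{1})^{*}K(j_{m,l}^{*}\nabla_{m+1,l})=(\pi_{m,l}\circ p_{1})^{*}K(\nabla_{m,l}\oplus d)$ by Proposition \ref{canonicalConnectionsCompatible}. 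Since $K$ is a multiplicative sequence and the Chern--Weil form of the flat connection $d$ on $\trivbc$ is simply the constant $1$, one has $K(\nabla_{m,l}\oplus d)=K(\nabla_{m,l})\cdot 1=K(\nabla_{m,l})$; the factor $p_{2}^{*}U_{\C}$ may then be reshuffled past the other even-degree factors without signs, yielding $\phi_{m,l}\otimes U_{\C}$.

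The only genuinely non-routine step is the reduction $K(\nabla_{m,l}\oplus d)=K(\nabla_{m,l})$; everything else is bookkeeping with cartesian squares and compatible pullbacks. Since $K(d)$ is built from the curvature of $d$, which vanishes, and since $K$ is multiplicative under Whitney sum, the Chern--Weil form of the trivial summand contributes only its leading constant $1$. This is standard but is the point where the multiplicativity of the genus $\phi$ (fixed in Definition \ref{def:def_of_phi_and_phip_on_cohomology}) actually enters; every other step is a formal consequence of the preceding propositions.
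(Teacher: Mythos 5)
Your proof is correct and fills in exactly the argument that the paper leaves implicit (the paper states the proposition with only the remark that it follows by "combining" Propositions \ref{MathaiQuillenThomFormsCompatible} and \ref{canonicalConnectionsCompatible}). The one ingredient beyond those two propositions, which you correctly isolate and justify, is the identity $K(\nabla_{m,l}\oplus d)=K(\nabla_{m,l})$ coming from multiplicativity of $K$ and flatness of $d$; this is indeed the only non-bookkeeping step.
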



\subsection{Thom spectra and the map \texorpdfstring{$A$}{A}}

We recall that $MU$ is the spectrum obtained from the spaces $MU(m,l)$ via the structure maps 
$$
s_{m,l}:= \Thom(\overline{i_{m,l}})\quad \text{and} \quad q_{m,l}:=\Thom(\overline{j_{m,l}})
$$
in the following way. First define the colimit $MU(m)=\colim_l MU(m,l)$ along the maps $q_{m,l}$. Then we get maps 
$$
s_m:=\colim_l s_{m,l}\colon \Sigma^2MU(m)\to MU(m+1)
$$
and so we get a sequential spectrum with spaces $MU_{2m}=MU(m)$ and $MU_{2m+1}=\Sigma MU(m)$.
Let $MU \to QMU$ be the fibrant replacement of $MU$ with spaces
$$
QMU_n := \colim_k \Omega^kMU_{n+k}.
$$

For the following definition we write $\overline a_{m,l}$ for the canonical map $MU(m,l)\to MU(m)$, $\overline b_{m,k}$ for the canonical map $\Omega^k MU(m)\to QMU_{2m-k}$, and 
$$
\Psi^k\colon \Mapp(\Sigma^kX,Y)\xto{\approx} \Mapp (X,\Omega^kY)
$$ 
for the homeomorphism 
$$\Psi^k(f) = (x\mapsto(t\mapsto f(x,t)))$$ 
of the adjunction $\Sigma^k\dashv \Omega^k$.

\begin{defn}
\label{defA}
We define 
$$
A\colon \Mapp(\Sigma^kX_+, MU(m,l))\to \Map(X,QMU_n)
$$
by
$$
A(g)=\overline b_{m,k}\circ \Psi^k(\overline{a}_{m,l}\circ g)|_X.
$$
\end{defn}

Now we assume that $X$ is a smooth manifold. 
In particular, since $X$ is finite dimensional, we get as a consequence of Freudenthal's Suspension Theorem, as stated in \cite[Corollary 3.2.3]{Kochman}, that $A$ induces a bijection on homotopy classes, provided $m,l$ are sufficiently large compared to $\dim X$. 
Hence from the perspective of homotopy theory we need only concern ourselves with the image of $A$, which we now give an alternative description of. 

\begin{defn}
Let $\Map^A(X,QMU_n)$ be the space of maps $g\colon X\to QMU_n$ such that $g=A(g')$ for some $g'\colon \Sigma^kX_+\to MU(m,l)$. 
\end{defn}
Let $g\colon \Sigma^kX_+\to MU(m,l)$. We note that if $g'\colon \Sigma^{k+2}X_+\to MU(m+1,l)$ satisfies $A(g')=A(g)$, then $g'=s_{m,l}\circ  g \times \id_\C$. 
Similarly, if $g''\colon \Sigma^{k}X_+\to MU(m,l+2)$ satisfies $A(g'')=A(g)$, then $g''=q_{m,l}\circ g$. 
Hence we consider systems of maps $\{g_{m,l}\}$ where $g_{m,l}\colon \Sigma^{k}X_+\to MU(m,l)$, with $2m=n+k$, such that for all sufficiently large $m,l$ we have 
\begin{align*} 
q_{m,l}\circ g_{m,l}=g_{m,l+1}, ~ \text{and} ~ s_{m,l}\circ \id_\C\times g_{m,l}=g_{m+1,l}.
\end{align*}
We say that two such systems $\{g_{m,l}\}$ and $\{g'_{m,l}\}$ are equivalent if $g'_{m,l}=g_{m,l}$ for all sufficiently large $m,l$. 
From this discussion we conclude:
\begin{prop}\label{FunctionsFromMapA}
There is a bijection between maps $g\in \Map^A(X,QMU_n)$ and the set of equivalence classes of systems of maps $\{g_{m,l}\}$. \qed 
\end{prop}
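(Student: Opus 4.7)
The plan is to exhibit explicit maps in both directions and check that they are mutually inverse.

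Going from systems to $\Map^A$: Given an equivalence class of compatible systems $\{g_{m,l}\}$, choose any representative and any sufficiently large pair $(m,l)$ with $2m=n+k$, and set $\Phi(\{g_{m,l}\}) := A(g_{m,l}) \in \Map^A(X,QMU_n)$. Independence of the choice of $(m,l)$ follows from the compatibility relations: if one enlarges $l$ using $q_{m,l}$ then $\overline a_{m,l+1}\circ q_{m,l} = \overline a_{m,l}$ by construction of the colimit defining $MU(m)$, so $A(q_{m,l}\circ g_{m,l}) = A(g_{m,l})$; if one enlarges $m$ using $s_{m,l}$, the corresponding compatibility follows from the definition of the structure map $s_m = \colim_l s_{m,l}$ and the fact that $\overline b_{m+1,k-2}$ agrees with $\overline b_{m,k}$ after the identification under the spectrum structure. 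Independence of the choice of representative within the equivalence class is immediate since two equivalent systems agree for all sufficiently large $(m,l)$.

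Going from $\Map^A$ to systems: Given $g = A(g') \in \Map^A(X,QMU_n)$ with $g' \colon \Sigma^k X_+ \to MU(m,l)$, define a system $\{g_{m',l'}\}$ by setting $g_{m,l} := g'$ and then defining $g_{m,l'}$ for $l' > l$ via iterated application of $q_{m,\cdot}$, and $g_{m',l}$ for $m' > m$ via iterated application of $s_{\cdot,l}$ together with the suspension coordinates.
The compatibility with both structure maps then holds by construction for all pairs in the cofinal range. Different choices of representative $g'$ for the same $g$ lead to systems which agree in a cofinal range by exactly the observation already recorded in the paragraph preceding the proposition: if $A(g') = A(g'')$ then, provided $(m,l)$ is large enough relative to $\dim X$ so that $A$ is injective on maps out of $\Sigma^k X_+$, any two such representatives must be related by the structure maps and hence define equivalent systems.

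It remains to check that these assignments are mutually inverse, but this is formal: starting with a system and picking a single $g_{m,l}$ to build back a system by iterated structure maps produces something equal to the original on the cofinal range where compatibility holds, and starting with $g = A(g')$, the system has $g_{m,l} = g'$ so applying $A$ gives back $g$. The main subtlety, and what I would expect to be the only place requiring care, is the bookkeeping of the two independent stabilisation directions indexed by $m$ and $l$, and in particular verifying that the compatibility $s_{m,l}\circ\id_\C\times g_{m,l} = g_{m+1,l}$ is consistent with the passage through the adjunction $\Psi^k$ in the definition of $A$; this amounts to unwinding the definitions of $\overline b_{m,k}$ and the spectrum structure maps $s_m$ on $MU$.
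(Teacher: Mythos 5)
Your proof follows essentially the same approach as the paper, which gives no explicit argument and simply concludes the proposition from the preceding discussion; you are unwinding the two directions and the compatibility checks in detail. One misattribution is worth flagging: you justify the uniqueness of a representative $g'$ with given $A(g')$ by appealing to $(m,l)$ being ``large enough relative to $\dim X$ so that $A$ is injective on maps out of $\Sigma^k X_+$.'' That is the Freudenthal condition, which yields a bijection on \emph{homotopy classes} for large $(m,l)$ — a different statement at a different level. What is actually needed (and what the paper leaves implicit) is point-set injectivity of $A$ on $\Mapp(\Sigma^k X_+, MU(m,l))$ for each fixed $(m,l)$. This holds because $\Psi^k$ and restriction to $X$ are bijections, while $\overline{a}_{m,l}\colon MU(m,l)\to MU(m)$ and $\overline{b}_{m,k}\colon \Omega^k MU(m)\to QMU_{2m-k}$ are canonical maps into colimits whose transition maps are injective (closed Thom-space embeddings, respectively their iterated loop spaces); post-composition with injective maps is injective. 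This is true for \emph{all} $(m,l)$, with no hypothesis on $\dim X$, and it is precisely what makes the two uniqueness assertions in the paragraph preceding the proposition valid. This does not break your proof — the injectivity you invoke is true — but the justification should be corrected. A small index typo: for the $m$-direction you should write $\overline{b}_{m+1,k+2}$ rather than $\overline{b}_{m+1,k-2}$, since $2m=n+k$ forces $k$ to increase by $2$ when $m$ does.
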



\subsection{Geometric fundamental forms and a new model}\label{sec:new_model_MUhs(p)}

Let $(X,F^*)\in \ManF$. 
We will now take smoothness of maps into account. 
Let $\mappsm(\Sigma^kX_+, MU(m,l))$ denote the space of pointed maps $\Sigma X_+\to MU(m,l)$ which are smooth on the preimage of $\gamma_{m,l}$.

\begin{defn}
We define $\mapsm(X,QMU_n)$ as the set of maps $g\colon X\to QMU_n$ such that $g=A(\gsm)$ for some $\gsm\in \mappsm(\Sigma^kX_+,MU(m,l))$. 
We define a map 
$$\phi^{m,l}_{sm}\colon \mappsm(\Sigma^kX_+,MU(m,l))\to \Ah^{2m}(\Sigma^k X_+;\Vh_*)_{\cl}$$ by $\gsm\mapsto \gsm^*\phi_{m,l}.$
For $n+k=2m$, applying the integration map $\int_{\Sigma^kX_+/X}$ provides a form in $\Ah^n(X;\Vh_*)$.
\end{defn}

\begin{lemma}\label{lemma:phi_ml_is_well_defined}
We have a well-defined map
\begin{align*}
\phi_{\sm}^n \colon \mapsm(X,QMU_n) \to \Ah^n(X;\Vh_*)_{\cl},
\end{align*}
given by
\[
g=A(\gsm)\mapsto \int_{\Sigma^kX_+/X}\gsm^*(\phi_{m,l}).
\] 
\end{lemma}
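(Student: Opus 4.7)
The plan is to show that if $g=A(\gsm)=A(\gsm')$ for two smooth representatives $\gsm\in \mappsm(\Sigma^kX_+, MU(m,l))$ and $\gsm'\in\mappsm(\Sigma^{k'}X_+, MU(m',l'))$, then $\int_{\Sigma^kX_+/X}\gsm^*\phi_{m,l} = \int_{\Sigma^{k'}X_+/X}(\gsm')^*\phi_{m',l'}$. By Proposition \ref{FunctionsFromMapA}, the datum of $g\in \Map^A(X,QMU_n)$ is equivalent to an equivalence class of stabilising systems $\{g_{m,l}\}$, so it suffices to check invariance of the integral under the two elementary stabilisation moves $(m,l)\mapsto (m,l+1)$ and $(m,l)\mapsto (m+1,l)$ (keeping $n=2m-k$ fixed). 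Along the way, one should observe that $\gsm^*\phi_{m,l}$ is closed and rapidly decreasing, so its fibre integral is a well-defined closed form on $X$; this follows since $\phi_{m,l}$ is closed and rapidly decreasing and both operations preserve these properties, fibre integration commuting with $d$ on rapidly decreasing forms.

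For the $l$-stabilisation the new representative is $g_{m,l+1}=\Thom(\overline{i_{m,l}})\circ g_{m,l}$, without altering the source $\Sigma^kX_+$. By Proposition \ref{phimlAreCompatible} we have $\overline{i_{m,l}}^*\phi_{m,l+1}=\phi_{m,l}$ on $\gamma_{m,l}$, and since $\phi_{m,l+1}$ is rapidly decreasing the same equality extends to the Thom space (both sides vanish near the basepoint). Hence $g_{m,l+1}^*\phi_{m,l+1}=g_{m,l}^*\phi_{m,l}$, and the integrals over $\Sigma^kX_+/X$ coincide.

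For the $m$-stabilisation the new representative lives on $\Sigma^{k+2}X_+=\Sigma^kX_+\wedge S^2$ and is obtained by smashing with the identity on $S^2$ followed by $\Thom(\overline{j_{m,l}})$. Proposition \ref{phimlAreCompatible} gives $\overline{j_{m,l}}^*\phi_{m+1,l}=\phi_{m,l}\otimes U_\C$, so under the smash decomposition the pullback $g_{m+1,l}^*\phi_{m+1,l}$ equals $g_{m,l}^*\phi_{m,l}\otimes U_\C$. Integration of the fibre of $\Sigma^{k+2}X_+\to X$ factors as integration over $S^2$ followed by integration over $\Sigma^kX_+/X$, and $U_\C$ being the Mathai--Quillen Thom form of the trivial complex line bundle over a point has $\int_{S^2}U_\C=1$. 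Therefore the fibre integral is unchanged.

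Since two equivalent systems agree for all sufficiently large $(m,l)$, combining the two cases shows that $\phi^n_{\sm}$ is well-defined on $\mapsm(X,QMU_n)$. The only real subtlety is the normalisation $\int_{S^2}U_\C=1$, together with unwinding the adjunction $\Sigma^k\dashv\Omega^k$ and the colimit definition of $QMU_n$ so that the two stabilisation moves correspond precisely to the equations of Proposition \ref{phimlAreCompatible}; once these bookkeeping issues are in place, the result is essentially tautological.
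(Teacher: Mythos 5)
Your proof is correct and follows essentially the same route as the paper: reduce to the two elementary stabilisation moves via Proposition \ref{FunctionsFromMapA}, then apply Proposition \ref{phimlAreCompatible} together with the normalisation $\int_{\Thom(\trivbc)/\pt}U_\C=1$. The only notational difference is that you phrase the $m$-stabilisation in terms of smashing with $S^2$ while the paper works directly with the restriction to $(X\times\R^k)\times\C$, but these are the same computation.
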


\begin{proof}
We will use Proposition \ref{phimlAreCompatible} to show that $\phi_\sm^n(g)$ is independent of the choice of $\gsm$ with $A(\gsm)=g$. By Proposition \ref{FunctionsFromMapA}, it suffices to show 
\begin{align}
\label{qconditionphinsm}
\int_{\Sigma^kX_+/X} \gsm^*\phi_{m,l}&=\int_{\Sigma^kX_+/X} (q_{m,l}\circ \gsm)^*\phi_{m,l+1}\\
\label{sconditionphinsm}
\int_{\Sigma^kX_+/X} \gsm^*\phi_{m,l+1} &= \int_{\Sigma^{k+2}X_+/X} (s_{m,l}\circ  \Sigma^2\gsm)^*\phi_{m+1,l}.
\end{align}
We first note that, since $q_{m,l}=\Thom(\overline{i_{m,l}})$, \eqref{qconditionphinsm} holds even before applying $\int_{\Sigma^kX_+/X}$ by Proposition \ref{phimlAreCompatible}.
Next observe that the map $s_{m,l}\circ \Sigma^2g_{\sm}$ is characterized by restricting to $\overline{j_{m,l}}\circ(\gsm|_{X\times \R^k}\times \id_{\C})$ on $(X\times \R^k)\times\C$. Proposition \ref{phimlAreCompatible} implies
\begin{align*} 
\left(\left(s_{m,l}\circ \Sigma^2\gsm\right)|_{X\times\R^k\times \C}\right)^*\phi_{m+1,l} &= \left(g_{\sm}|_{X\times \R^k}\times\id_\C\right)^*(\overline{j_{m,l}}^*\phi_{m+1,l}) \\
\nonumber &= (g_{\sm}|_{\R^k\times X})^*\phi_{m,l}\otimes U_\C
\end{align*}
and \eqref{sconditionphinsm} follows, since $\int_{X\times \C/X}U_\C =1$.
\end{proof}

\begin{remark}
Recall the $k$-th standard simplex 
\begin{equation}\label{StandardTopologicaln-simplex}
\DD^k =\{ (t_0,\ldots,t_k) \in \R^{k+1} | 0 \leq t_j \leq 1, \sum t_j =1 \}.
\end{equation} 
Note that $\DD^k$ is a smooth manifold with corners. 
Recall that a map $\DD^k \to S$ is smooth if it can be extended to a smooth map on an open neighborhood of $\DD^k$ in $\R^{k+1}$. 
We refer to \cite[\S 1.5]{wall} for any details. 
\end{remark}

\begin{remark}
\label{rem:order_of_Delta_and_X_integration} 
We note that in the following argument and in the remainder of the paper we consider the product $\DD^{\bullet} \times X$ instead of $X \times \DD^{\bullet}$. 
While the latter is more common for arguments in homotopy theory, the former has the advantage that it simplifies arguments which involve integration over $\Delta^{\bullet}$ as defined in Remark \ref{IntegrationOverTheFiber}. 
For, if $\omega$ is a form on $\DD^1 \times X$, then the derivative of the integral over the fiber satisfies the formula
\begin{align*}
d\int_{\DD^1\times X/X}\omega = -\int_{\DD^1\times X/X}d\omega +\iota_1^*\omega- \iota_0^*\omega.
\end{align*}
For a form $\omega$ on $X \times \DD^1$, however, we get  
\begin{align*}
d\int_{X\times \DD^1/X}\omega = - \int_{X\times \DD^1/X}d\omega+ (-1)^{\dim_{\R} X}(\iota_1^*\omega- \iota_0^*\omega)
\end{align*}
with an additional sign $(-1)^{\dim_{\R} X}$ which arises from a necessary reshuffling of the coordinates. 
For a complex manifold $X$, this would not matter. Since we want to allow manifolds in $\ManF$, we chose to work with $\DD^{\bullet}\times X$. 
\end{remark}

Since both the domain and the codomain of the map $\phi^n_{\sm}$ are defined for every finite-dimensional manifold, we can replace $X$ with $\Delta^k \times X$ for any $k$. 
Moreover, 
Lemma \ref{lemma:phi_ml_is_well_defined} applies to $\Delta^k \times X$ as well. 
Hence we draw the following conclusion:

\begin{prop}\label{prop:phi_sm_is_comptatible_with_suspensions}
The maps $\phi_{\sm}^{m,l}$ induce maps of simplicial sets 
\begin{align*}
\phi_{\sm}^{n} \colon \mapsm(\Delta^{\bullet} \times X, QMU_n) \to \Ah^n(\Delta^{\bullet} \times X;\Vh_*)
\end{align*}
which 
fit into commutative diagrams of the form 
\begin{align*}
\xymatrix{
\mappsm(\Sigma(\Delta^{\bullet} \times X)_+, QMU_{n+1}) \ar[r]^-{\phi_{\sm}^{n+1}} \ar[d] &  \Ah^{n+1}(\Sigma(\Delta^{\bullet} \times X)_+;\Vh_*)_{\cl} \ar[d] \\
\mappsm((\Delta^{\bullet} \times X)_+, QMU_{n}) \ar[r]^-{\phi_{\sm}^{n}} & \Ah^{n}((\Delta^{\bullet} \times X)_+;\Vh_*)_{\cl}
}
\end{align*}
where the right-hand vertical map is given by integration over the fiber. \qed
\end{prop}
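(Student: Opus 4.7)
The plan is to verify in turn three facts: (i) at each simplicial level $k$, the prescription defines a map of sets; (ii) these maps are natural with respect to the face and degeneracy operators induced from $\Delta^{\bullet}$; and (iii) the displayed square commutes. Fact (i) is essentially a direct application of the preceding lemma: since $\Delta^k \times X$ is a (corner) smooth manifold, Lemma \ref{lemma:phi_ml_is_well_defined} with $\Delta^k \times X$ in place of $X$ provides the desired map
\[
\phi^n_{\sm} \colon \mapsm(\Delta^k \times X, QMU_n) \longrightarrow \Ah^n(\Delta^k \times X; \Vh_*)_{\cl},
\]
together with its independence of the choice of smooth representative.

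For (ii), let $\alpha \colon [k'] \to [k]$ be any morphism in the simplex category and denote by $\alpha \times \id_X \colon \Delta^{k'} \times X \to \Delta^k \times X$ the induced smooth map. If $g = A(g_\sm)$ with $g_\sm \colon \Sigma^r(\Delta^k \times X)_+ \to MU(m,l)$ and $2m = n+r$, then $g \circ (\alpha \times \id_X) = A\bigl(g_\sm \circ \Sigma^r(\alpha \times \id_X)_+\bigr)$. Naturality of pullback of forms combined with base change for integration over the trivial fiber bundle $\Sigma^r(-)_+ \to -$ yields
\[
\phi^n_{\sm}(g \circ (\alpha \times \id_X)) = (\alpha \times \id_X)^* \phi^n_{\sm}(g),
\]
which is precisely simplicial functoriality.

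For (iii), write $Y = \Delta^{\bullet} \times X$ and let $\tilde g = A(\tilde g_\sm) \in \mappsm(\Sigma Y_+, QMU_{n+1})$ be represented by a smooth $\tilde g_\sm \colon \Sigma^r(\Sigma Y_+) \to MU(m,l)$ with $2m = (n+1)+r$. Using the identification $\Sigma^r(\Sigma Y_+) = \Sigma^{r+1} Y_+$ and the matching $2m = n + (r+1)$, the very same smooth map $\tilde g_\sm$ serves as a representative of an element $A(\tilde g_\sm) \in \mapsm(Y, QMU_n)$. Inspection of Definition \ref{defA}, together with the observation that the canonical map $\Omega QMU_{n+1} \to QMU_n$ coming from the colimit $QMU_n = \colim_k \Omega^k MU_{n+k}$ is realized on smooth representatives by the index shift $r \leftrightarrow r+1$, identifies $A(\tilde g_\sm)$ with the image of $\tilde g$ under the left vertical arrow. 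Iterating integration over the fiber (Fubini) we then compute
\[
\int_{\Sigma Y_+ / Y} \phi^{n+1}_{\sm}(\tilde g)
= \int_{\Sigma Y_+ / Y} \int_{\Sigma^r(\Sigma Y_+)/\Sigma Y_+} \tilde g_\sm^* \phi_{m,l}
= \int_{\Sigma^{r+1} Y_+ / Y} \tilde g_\sm^* \phi_{m,l}
= \phi^n_{\sm}(A(\tilde g_\sm)),
\]
which is the asserted commutativity.

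The main obstacle I foresee is organizational rather than substantive: one must choose smooth representatives at matching suspension degrees on both sides of the square, and the identification of the left vertical arrow rests on tracking carefully how the stabilization procedure of Section 3 is implemented at the level of smooth representatives (this is where Proposition \ref{phimlAreCompatible} enters behind the scenes, via Lemma \ref{lemma:phi_ml_is_well_defined}). Once these bookkeeping choices are made consistently, the argument reduces to standard naturality of pullback of differential forms and to Fubini for iterated integration over the fiber.
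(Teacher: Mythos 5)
Your proposal is correct and follows the same line of reasoning that the paper leaves implicit: apply Lemma \ref{lemma:phi_ml_is_well_defined} with $\Delta^k \times X$ in place of $X$ for simplicial levelwise definedness, use naturality of pullback and base change for fiber integration along the smooth maps $\alpha\times\id_X$ for simplicial functoriality, and use the identification $\Sigma^r(\Sigma Y_+)=\Sigma^{r+1}Y_+$ together with Fubini for iterated integration over the fiber to verify the square. The paper treats all three points as immediate consequences of the preceding discussion and supplies no written proof, so your write-up is simply a more explicit version of the intended argument; the one place worth stating a bit more carefully is that $\Omega QMU_{n+1}\to QMU_n$ is in fact a homeomorphism (since $QMU$ is a fibrant $\Omega$-spectrum), which is what makes the left-hand vertical map well-defined on the nose rather than merely up to homotopy.
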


From the first assertion of Proposition \ref{prop:phi_sm_is_comptatible_with_suspensions} we conclude that we have the following diagram of simplicial sets 
\begin{align}\label{eq:htpy_cartesian_MU_hs(X)}
\xymatrix{
 & \mapsm(\Delta^{\bullet} \times X,QMU_n) \ar[d]^-{\phi^n_{\sm}} \\
F^p\Ah^n(\Delta^{\bullet} \times X;\Vh_*)_{\cl} \ar[r] & 
\Ah^n(\Delta^{\bullet} \times X;\Vh_*)_{\cl}
}    
\end{align}
where $F^p\Ah^n(\Delta^{\bullet} \times X;\Vh_*)_{\cl}$ is defined in Definition \ref{def:Hodge_filtration_on_product}.

\begin{defn}\label{def:MU_hs(X)}
Let $(X,F^*) \in \ManF$ and let $n$ and $p$ be integers. 
Let $\MUhs(p)_n(X)$ be the homotopy pullback of diagram \eqref{eq:htpy_cartesian_MU_hs(X)}. 
We set 
\begin{align*}
\MUhs^n(p)(X):= \pi_0(\MUhs(p)_n(X)).    
\end{align*}
\end{defn}

Recall that we identify the interval $[0,1]$ and $\Delta^1$ via $t\leftrightarrow (t,1-t)$. Denote by $\iota_t^1$ the map
\begin{align}
\label{eq:def_of_iota_t^1}
\iota_t^1 \colon \Delta^1 \times \Delta^0 \times X \into \Delta^1 \times \Delta^1 \times X
\end{align}
with image $\Delta^1 \times \{t\} \times X$, 
and by $\iota_s^2$ the map
\begin{align}
\label{eq:def_of_iota_s^2}
\iota_s^2 \colon \Delta^0 \times \Delta^1 \times X \into \Delta^1 \times \Delta^1 \times X
\end{align}
with image $\{s\} \times \Delta^1 \times X$. 
Since \eqref{eq:htpy_cartesian_MU_hs(X)} is a diagram of Kan complexes, the set $\pi_0(\MUhs(p)_n(X))$ has the following concrete description:

\begin{lemma}\label{thm:HFC_concrete_triples} 
Let $(X,F^*)\in \ManF$. Every element in $\MUhs^n(p)(X)$ is represented by a triple 
\[
(g,\omega,h)\in \Mapsm(X,QMU_n)\times F^p\Ah^n(X;\Vh_*)_{\cl} \times \Ah^n(X\times\Delta^1;\Vh_*)_{\cl},
\] 
such that $\iota_1^*h=\phi^n_{\sm}(g)$ and $\iota_0^*h=\omega$. 
Two such triples $(g_0,\omega_0,h_0)$ and $(g_1,\omega_1,h_1)$ are homotopic if there is a triple $(\gb,\omb,h_\bullet)$ in
\[
\mapsm(\Delta^1 \times X,QMU_n) \times F^p\Ah^n(\Delta^1 \times X;\Vh_*)_{\cl} \times \Ah^n(\Delta^1 \times \Delta^1 \times X;\Vh_*)_{\cl}
\]
which satisfies $(\iota^2_1)^*\hb = \phi^n_{\sm}(\gb)$ and $(\iota^2_0)^*\hb = \omb$ in $\Ah^{n}(\Delta^1 \times X;\Vh_*)$, and   
such that $\iota_i^*(\gb,\omb,\hb)=(g_i,\omega_i,h_i)$ for $i=0,1$. 
The latter means, in particular, $(\iota^1_0)^* \hb = h_0$ and $(\iota^1_1)^*\hb = h_1$. \qed 
\end{lemma}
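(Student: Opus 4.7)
The plan is to compute $\pi_0$ of the homotopy pullback in Definition \ref{def:MU_hs(X)} by using the standard path-space model. Concretely, for a diagram $B \to C \leftarrow A$ of Kan complexes, the homotopy pullback is weakly equivalent to the strict pullback $B \times_C C^{\Delta^1} \times_C A$, and its $\pi_0$ is represented by triples $(a, b, h)$ where $h$ is a $1$-simplex of $C$ from the image of $b$ to the image of $a$, modulo a suitable equivalence. So first I would verify that the three simplicial sets in \eqref{eq:htpy_cartesian_MU_hs(X)} are Kan complexes. For $\mapsm(\Delta^\bullet \times X, QMU_n)$ this follows from the standard argument that simplicial mapping spaces into a Kan complex are Kan, adapted to the smooth setting via smoothing of extensions. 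For $\Ah^n(\Delta^\bullet \times X; \Vh_*)_{\cl}$ and its filtered subcomplex $F^p\Ah^n(\Delta^\bullet \times X; \Vh_*)_{\cl}$, the Kan property follows from the existence of smooth extensions of closed (filtered) forms from horns $\Lambda^n_i \subset \Delta^n$ to all of $\Delta^n$, which one obtains via a partition of unity and standard convex-combination arguments (noting that the Hodge filtration is preserved under these operations since coefficients lie in $\Ah^0$).

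Next I would unpack what a $0$-simplex of the path-space model looks like in the three corners of \eqref{eq:htpy_cartesian_MU_hs(X)}. A $0$-simplex of $\mapsm(\Delta^\bullet \times X, QMU_n)$ evaluated at $\Delta^0$ is precisely a smooth map $g \colon X \to QMU_n$; a $0$-simplex of $F^p\Ah^n(\Delta^\bullet \times X; \Vh_*)_{\cl}$ evaluated at $\Delta^0$ is a closed form $\omega \in F^p\Ah^n(X; \Vh_*)_{\cl}$; and a $1$-simplex of $\Ah^n(\Delta^\bullet \times X; \Vh_*)_{\cl}$ is a closed form $h \in \Ah^n(\Delta^1 \times X; \Vh_*)_{\cl}$. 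The requirement that the endpoints of $h$ be the images of $\omega$ and $g$ translates to $\iota_0^* h = \omega$ (under the inclusion $F^p \into \Ah^n$) and $\iota_1^* h = \phi^n_{\sm}(g)$, as claimed.

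For the equivalence relation, I would spell out what it means to give a $1$-simplex in the path-space model between $(g_0, \omega_0, h_0)$ and $(g_1, \omega_1, h_1)$. Such a $1$-simplex consists of a $1$-simplex $\gb$ in $\mapsm(\Delta^\bullet \times X, QMU_n)$, a $1$-simplex $\omb$ in $F^p\Ah^n(\Delta^\bullet \times X; \Vh_*)_{\cl}$, and a $2$-simplex in $\Ah^n(\Delta^\bullet \times X; \Vh_*)_{\cl}$ whose faces are $h_0$, $h_1$, and the image paths of $\gb$, $\omb$. Using the standard subdivision of $\Delta^1 \times \Delta^1$ into two $2$-simplices (or equivalently, re-parametrising a homotopy of paths), this $2$-simplex can be replaced by an equivalent $1$-simplex in the free path object, namely a closed form $\hb \in \Ah^n(\Delta^1 \times \Delta^1 \times X; \Vh_*)_{\cl}$, with the face conditions translating via $\iota^1_i$ and $\iota^2_s$ to exactly those stated in the lemma.

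The main obstacle will be the Kan property for the filtered form complex; the rest is a formal unpacking of the path-space description of the homotopy pullback of Kan complexes. Once Kan-ness is established, uniqueness of the representative up to the described equivalence is immediate from the standard fact that for Kan complexes the naive model computes $\pi_0$ of the homotopy pullback.
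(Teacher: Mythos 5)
Your proposal is correct and matches the paper's intended argument: the paper presents this lemma without proof, as an immediate consequence of the remark just preceding it that \eqref{eq:htpy_cartesian_MU_hs(X)} is a diagram of Kan complexes, together with the standard path-space model $B\times_C C^{\Delta^1}\times_C A$ for the homotopy pullback, which is exactly the unpacking you carry out. One small simplification worth noting: the Kan condition for $\Ah^n(\Delta^\bullet \times X;\Vh_*)_{\cl}$ and $F^p\Ah^n(\Delta^\bullet \times X;\Vh_*)_{\cl}$ is immediate from their being simplicial abelian groups (Moore's lemma), a fact the paper itself invokes in Proposition \ref{prop:A_Sigma_is_a_spectrum}, so no partition-of-unity or convex-combination argument is needed there.
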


\begin{remark}
\label{rem:MUhs_is_a_Hodge_filtered_extension}
We note that $X \mapsto \MUhs^*(p)(X)$ is a Hodge filtered extension of $(MU^*,\phi^p)$ over $\ManF$ in the sense of Definition \ref{def:axioms}.  
The structure maps $I_\hs$ and $R_\hs$ are induced by the maps of simplicial sets $\MUhs(p)_n(X) \to \Mapsm(X\times \Delta^{\bullet},QMU_n)$ and $\MUhs(p)_n(X) \to F^p\Ah^n(\Delta^{\bullet} \times X;\Vh_*)$, respectively. 
The map $a_\hs$ arises from the fact that $\MUhs(p)_n(X)$ is homotopy equivalent to the homotopy fiber of the induced map of simplicial sets 
\[
\Mapsm(\Delta^{\bullet} \times X,QMU_n) \to \Ah^n(\Delta^{\bullet} \times X;\Vh_*)/F^p\Ah^n(\Delta^{\bullet} \times X;\Vh_*).
\]
The induced long exact sequence of the homotopy fiber yields an exact sequence of the form \eqref{eq:long_es_intro}. 
\end{remark}


\section{Comparison of homotopy models}
\label{sectionComparisonOfHFCTheories}

Now we show that for $X$ a complex manifold, there is a natural isomorphism $\MUD^*(p)(X) \cong \MUhs^*(p)(X)$ for every $p\in \Z$. 
We will show the existence of the isomorphism by showing that there is a zig-zag of weak equivalence between the defining homotopy pullbacks. 
We restrict to $X \in\Manc$, as opposed to $X\in\ManF$, since $\MUD^n(p)(X)$ has only been defined for complex manifolds in \cite{hfc}. 
We would expect, however, that an extension of $\MUD^*(p)(X)$ to $\ManF$ is possible as well. 

\subsection{Notation} 
Let $\sPre = \sPre(\Manc)$ be the category of simplicial presheaves on the site $\Manc$ with Grothendieck topology defined by open subsets. 
We consider $\sPre$ with the local projective model structure. 
The weak equivalences are maps which induce weak equivalences of simplicial sets on stalks. 
We denote the resulting homotopy category by $\hosPre$. 
Let $\sPrep$ denote the category of pointed presheaves. 
We denote the category of presheaves of sequential spectra of simplicial sets on $\Manc$ by $\mathrm{Sp}(\sPrep)$ and consider it as a model category with the model structure induced by stabilising the one of $\sPrep$. 
We denote the resulting homotopy category by $\hoSpPre$. 

For a topological space $Z$, we consider the simplicial presheaf $\Sing Z$ on $\Manc$ 
whose $n$-simplices are continuous maps
\[
\DD^n \times X \to Z.
\]
Note that we have a canonical isomorphism of simplicial sets $\sing\,Z = \Sing Z(\pt)$. 
For every $CW$-complex $Z$, the simplicial presheaf $\Sing Z$ is objectwise fibrant and satisfies hypercover descent in $\Manc$ by \cite[Theorem 1.3]{di} (see also \cite[Lemma 2.3]{hfc}). 
A continuous map $Z \to Z'$ induces a map of presheaves $\Sing Z \to \Sing Z'$. 
If $E$ is a sequential spectrum of topological spaces, the structure maps of $E$ turn $\Sing E$ into a presheaf of spectra of simplicial sets on  $\Manc$. 


\subsection{\texorpdfstring{$\MUhs(p)$}{MUhsp} is a presheaf of spectra}
\label{sec:MUhs_is_a_presheaf_of_spectra}

First we observe that $\Mapsm(\DD^{\bullet} \times X,QMU)$ is a simplicial spectrum with structure maps defined as follows: 
Recall that, as described in \cite[page 379]{hs}, a $k$-simplex of the simplicial loop space $\Omega^{\simp}A_{\bullet}$ of a simplicial set $A_{\bullet}$ 
can be described as a sequence 
$$
a_0,\ldots,a_{k}\in A_{k+1}
$$
satisfying the conditions 
\begin{align}
\label{eq:simplicialLoopSpaceIdentities_A_bullet} \partial_i^*a_i &= \partial_i^*a_{i-1}\\
\nonumber \partial_0^*a_0 &=*=\partial_{k+1}^*a_{k}
\end{align} 
The homeomorphism $QMU_n \xto{\approx} \Omega QMU_{n+1}$ induces for each $n$ a natural isomorphism $\Mapsm(\DD^{\bullet} \times X,QMU_n) \xto{\cong} \Mapsm(\DD^{\bullet} \times X,\Omega QMU_{n+1})$.
The adjunction between the suspension and loop space functors then induces a natural isomorphism 
$\Mappsm(\Sigma(\DD^{\bullet} \times X)_+,QMU_{n+1}) \xto{\cong} \Mapsm(\DD^{\bullet} \times X,QMU_{n+1})$. 
A pointed map $\Sigma(\DD^{\bullet} \times X)_+ \to QMU_{n+1}$ corresponds to a map $\Delta^1 \times \DD^{\bullet} \times X \to QMU_{n+1}$ which restricts to the constant map on both subspaces $\{0\} \times \DD^{\bullet} \times X$ and $\{1\} \times \DD^{\bullet} \times X$ with value the canonical basepoint of $QMU_{n+1}$. 
The restriction of any such map to the $(k+1)$-simplices in the standard triangulation of $\DD^1 \times \Delta^k$ leads to a sequence of maps 
\[
g_0,\ldots,g_{k} \colon \Delta^{k+1} \times X \to QMU_{n+1}, 
\] 
i.e., a sequence of $(k+1)$-simplices in $\Mapsm(\DD^{\bullet} \times X,QMU_{n+1})$ satisfying the relations corresponding to \eqref{eq:simplicialLoopSpaceIdentities_A_bullet}.  
This defines a natural map of simplicial sets 
\begin{align}
\label{eq:Mapsm_to_Omega}
\Mapsm(\DD^{\bullet} \times X,QMU_{n}) \to \Omega^{\simp}\Mapsm(\DD^{\bullet} \times X,QMU_{n+1}) 
\end{align}
which provides the sequence $n \mapsto \Mapsm(\DD^{\bullet} \times X,QMU_{n})$ with the structure of a sequential spectrum of simplicial sets.

Second, we choose a concrete model for presheaves of Eilenberg--MacLane spaces. 
More concretely, we will prove:
\begin{prop}\label{DoldKanA(V)}
The simplicial presheaf $X\mapsto \Ah^{n}(\Delta^{\bullet} \times X;\Vh_*)$ on $\Man$ is weakly equivalent to $K(\Ah^*(\Vh_*),n)$.
\end{prop}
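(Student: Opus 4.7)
The approach is to apply the Dold--Kan correspondence. For each $X$, the simplicial set $A_\bullet(X) := \Ah^n(\Delta^\bullet \times X;\Vh_*)$ is in fact a simplicial abelian group under pointwise addition of forms, and (by construction, following Hopkins--Singer \cite[Appendix D]{hs}) the target $K(\Ah^*(\Vh_*),n)$ is the simplicial abelian presheaf associated via Dold--Kan to the non-negatively graded truncation of the shifted cochain complex of presheaves $\Ah^{n-*}(-;\Vh_*)$. It therefore suffices to produce, naturally in $X$, a quasi-isomorphism of chain complexes of presheaves
\[
\Pi \colon N A_*(-) \xto{\sim} \tau_{\geq 0}\, \Ah^{n-*}(-;\Vh_*),
\]
where $NA_*$ denotes the normalized Moore complex; then Dold--Kan returns an objectwise, hence local, weak equivalence of simplicial abelian presheaves.

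First I would define $\Pi$ in simplicial degree $k$ by integration over the fiber,
\[
\Pi(\omega) \;=\; \int_{\Delta^k \times X / X} \omega \;\in\; \Ah^{n-k}(X;\Vh_*).
\]
Using the formula of Remark \ref{rem:order_of_Delta_and_X_integration} together with Stokes' theorem applied to the standard simplex, one gets
\[
d \int_{\Delta^k \times X / X} \omega \;=\; -\int_{\Delta^k \times X / X} d\omega \;+\; \sum_{i=0}^{k} (-1)^i \int_{\Delta^{k-1}\times X / X} \partial_i^* \omega.
\]
On the normalized subcomplex $N_k A(X) = \bigcap_{i \geq 1} \ker \partial_i^*$, only the $i=0$ boundary term survives, and after the routine sign adjustment $\Pi$ becomes a chain map.

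Second, I would show $\Pi$ is a quasi-isomorphism. A natural candidate section sends $\omega \in \Ah^{n-k}(X;\Vh_*)$ to $\pi_X^*\omega \wedge \pi_{\Delta^k}^* \nu_k$, where $\nu_k$ is a chosen cosimplicial volume form with $\int_{\Delta^k} \nu_k = 1$; then $\Pi \circ s = \id$ by Fubini. For $s \circ \Pi \simeq \id$, one constructs a chain homotopy from the fiberwise Poincar\'e lemma on the contractible simplices $\Delta^k$, integrating forms along a smooth contraction of $\Delta^k$ to a vertex. Alternatively, one can bypass an explicit homotopy by filtering the double complex $\Ah^{q}(\Delta^\bullet \times X;\Vh_*)$ by form-degree along $\Delta^\bullet$: the $E_1$-page of the resulting spectral sequence collapses by the Poincar\'e lemma on $\Delta^k$ to the constant cosimplicial complex $\Ah^{n-*}(X;\Vh_*)$, and a direct identification shows the resulting isomorphism $H_k(NA_*(X)) \cong H^{n-k}(\Ah^*(X;\Vh_*))$ is the one induced by $\Pi$.

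The main obstacle is the bookkeeping of signs: the Moore differential on $NA_*$, the de Rham differential on $\Ah^*(X;\Vh_*)$, and the signs produced by Stokes' theorem over $\Delta^\bullet$ must all be reconciled with the conventions built into the definition of $K(\Ah^*(\Vh_*),n)$. A secondary subtlety is ensuring that the quasi-inverse (or the homotopy) is natural in $X$, so that the quasi-isomorphism holds as presheaves rather than merely objectwise; the spectral-sequence route handles naturality essentially for free, which is why I would prefer it in the actual write-up.
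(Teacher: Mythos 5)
Your overall strategy — Dold--Kan plus integration over the fiber — is exactly the paper's. The paper reduces the proposition to Lemma~\ref{tauInverseToIntegration}, which states that $\int_{\DD^{\bullet}\times X/X}$ is a chain homotopy equivalence between the Moore complex of the simplicial group $\Ah^n(\DD^{\bullet}\times X;\Vh_*)$ and the shifted de Rham complex, and then constructs an explicit homotopy inverse $\tau$ recursively via the radial contraction $h_k$ built from $p_k$ and a bump function.

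However, your proposed section $s(\omega)=\pi_X^*\omega\wedge\pi_{\Delta^k}^*\nu_k$ is not a chain map, so the first route you sketch has a genuine gap. On the normalized complex the Moore differential is (up to sign) $\partial_0^*$, and $\partial_0^*\!\left(\pi_X^*\omega\wedge\pi_{\Delta^k}^*\nu_k\right)=\pi_X^*\omega\wedge\pi_{\Delta^{k-1}}^*\delta_0^*\nu_k=0$ since a top-degree form on $\Delta^k$ pulls back to zero on $\Delta^{k-1}$; on the other hand $s(d\omega)=\pi_X^*d\omega\wedge\pi_{\Delta^{k-1}}^*\nu_{k-1}$ is not zero. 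So $\Pi\circ s=\id$ does not by itself produce a quasi-inverse, and the homotopy relation $s\circ\Pi\simeq\id$ cannot even be formulated without modifying $s$. The paper's $\tau$ differs from your $s$: it is defined by the recursion $\tau_k=d\circ h_k\circ\tau_{k-1}$ and is built to satisfy the required chain-map identity.

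Your spectral-sequence alternative would establish the weak equivalence and handles naturality cleanly, so as a proof of the stated proposition it is acceptable and arguably more robust. But it is worth noting why the paper insists on the explicit $\tau$: the same $\tau$ (restricted) proves Proposition~\ref{FpEilenbergMacLane}, it is the comparison map $H(\Ah^*(\Vh_*))\to\Ahs^*(\Vh_*)$ in diagram~\eqref{eq:map_of_diagrams_for_htpy_pb}, and the identity \eqref{equationiotatau} for $\tau_1$ ($\iota_0^*\tau_1=d$, $\iota_1^*\tau_1=0$) is used to define the structure map $a_{\hs}$ in section~\ref{subsec:kappa_is_an_iso}. A purely homological argument, such as your spectral-sequence route, proves this proposition but would not supply the concrete $\tau$ that the rest of the paper relies on. If you prefer the spectral-sequence write-up here, you would still need to construct something like $\tau$ later, so the net saving is small.
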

We recall that the Dold--Kan correspondence is an equivalence of categories between simplicial abelian groups and connective chain-complexes under which weak equivalences correspond to quasi-isomorphisms. 
It associates to a simplicial presheaf $F_\bullet$ the normalized chain complex, $N(F_\bullet)$. 
There is also the Moore complex of a simplicial presheaf, $M(F_\bullet)$, which has $n$-th presheaf $F_n$, and differentials given by $\sum(-1)^i\partial_i$, where $\partial_i$ are the face maps of $F_\bullet$. 
There is a natural map $N(F_\bullet)\to M(F_\bullet)$ which is a quasi-isomorphism by \cite[Theorem III.2.1]{GoerssJardine}. 
Proposition \ref{DoldKanA(V)} therefore follows from the Lemma \ref{tauInverseToIntegration} below.

\begin{lemma}\label{tauInverseToIntegration}
Integration over the fiber $\int_{\DD^k \times X/X}$ is a chain homotopy equivalence 
$$
\xymatrix@C=2em{
\cdots\ar[r]^-{\partial^*}& \Ah^n(\DD^k \times X;\Vh_*)_{\cl}\ar[r]^-{\partial^*}\ar[d]_{\int_{\Delta^k\times X/X}} & \cdots \ar[r]^-{\partial^*}& \Ah^n(\Delta^1\times X;\Vh_*)_{\cl}\ar[r]^-{\partial^*}\ar[d]_{\int_{\Delta^1\times X/X}}& \Ah^n(X;\Vh_*)_{\cl}\ar[d]_{\id} \\
\cdots\ar[r]^-d &\Ah^{n-k}(X;\Vh_*)\ar[r]^-d & \cdots \ar[r]^-d & \Ah^{n-1}(X;\Vh_*)\ar[r]^-d & \Ah^{n}(X;\Vh_*)_{\cl},}
$$ 
with homotopy inverse $\tau$ defined below.
\end{lemma}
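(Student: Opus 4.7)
I would produce $\tau$ as an explicit formula, check that $F\circ\tau = \mathrm{id}$ and that $\tau$ is a chain map, and then exhibit an explicit chain homotopy from $\tau\circ F$ to $\mathrm{id}_{C_\bullet}$.

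\emph{Construction of $\tau$ and verification of $F\tau = \mathrm{id}$.} For $\alpha\in\Ah^{n-k}(X;\Vh_*)$, let $p_X\colon \Delta^k\times X\to X$ be the projection and set
\[
\tau_k(\alpha) := \epsilon_k\, d\bigl(\eta_k\wedge p_X^*\alpha\bigr),
\]
where $\eta_k\in\Omega^{k-1}(\Delta^k)$ is a fixed primitive of the normalized volume form on $\Delta^k$, e.g.\ the Dupont form $\eta_k = (k-1)!\sum_{j=1}^{k}(-1)^{j-1}t_j\,dt_1\wedge\cdots\widehat{dt_j}\cdots\wedge dt_k$, and $\epsilon_k\in\{\pm 1\}$ is a combinatorial sign to be fixed inductively. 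Since $\tau_k(\alpha)$ is exact it is closed, and expanding it as $\epsilon_k\bigl(\mathrm{vol}_{\Delta^k}\wedge\alpha + (-1)^{k-1}\eta_k\wedge d\alpha\bigr)$, fiber integration kills the second summand (its $\Delta^k$-degree is only $k-1$) and the first integrates to $\epsilon_k\alpha$. Choosing $\epsilon_0 = 1$ and an inductive sign rule gives $F\circ\tau = \mathrm{id}$.

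\emph{Chain-map property and chain homotopy.} A direct calculation with Dupont's formula shows $\partial_i^*\eta_k = 0$ for $i\ge 1$ and $\partial_0^*\eta_k = \mathrm{vol}_{\Delta^{k-1}}$, hence $\partial^*\eta_k = \mathrm{vol}_{\Delta^{k-1}}$. Since $\partial^*$ commutes with $d$, one gets $\partial^*\tau_k(\alpha) = (-1)^{k-1}\tau_{k-1}(d\alpha)$, which becomes the chain-map identity after setting $\epsilon_k := (-1)^{\binom{k}{2}}$ (so that $\epsilon_k/\epsilon_{k-1}=(-1)^{k-1}$). For the chain homotopy $h_k\colon C_k\to C_{k+1}$ with $\mathrm{id}_{C_k} - \tau_k F_k = \partial^* h_{k} + h_{k-1}\partial^*$ I would use the standard prism decomposition of $\Delta^k\times[0,1]$ as a union of $(k+1)$-simplices together with the deformation contracting $\Delta^k$ to the vertex $v_0$: pulling back a given $\omega\in C_k$ along this deformation and summing the resulting fiber integrals over the prism simplices defines $h_k$, and the required identity then follows from Stokes' theorem on $\Delta^{k+1}$.

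\emph{Main obstacle and a shortcut.} The delicate part is the combinatorial sign bookkeeping, both for the chain-map identity and for matching the Stokes boundary terms in the construction of $h_k$ with the alternating-sum simplicial differential $\partial^*$. A less explicit but much quicker route avoids constructing $h$ altogether: both $C_\bullet$ and $D_\bullet$ are non-negatively graded chain complexes of $\C$-vector spaces, hence cofibrant in the projective model structure on chain complexes, so any quasi-isomorphism between them is automatically a chain homotopy equivalence. It therefore suffices to verify that $F$ is a quasi-isomorphism, which follows by identifying both sides with $H^{n-\bullet}_{\mathrm{dR}}(X;\Vh_*)$: for $D_\bullet$ this is immediate, and for $C_\bullet$ it follows via Dold--Kan from the fact that the simplicial abelian group $k\mapsto\Ah^n(\Delta^k\times X;\Vh_*)_{\cl}$ models the shifted de Rham complex of $X$.
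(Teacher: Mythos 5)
Your proposal is correct in outline, but it follows a genuinely different route from the paper, which is worth comparing. The paper does \emph{not} give a self-contained proof of the chain homotopy equivalence: it cites \cite[Corollary D.14]{hs} for that, and its contribution is only to make the inverse $\tau$ explicit, which it does \emph{recursively} via $\tau_0 = \id$, $\tau_k = d\circ h_k\circ\tau_{k-1}$, where $h_k(\omega)=g(1-t_0)\,(p_k\times\id)^*\omega$ is a cone contraction using the radial projection $p_k\colon\Delta^k\setminus v_0\to\Delta^{k-1}$ cut off by a bump function $g$ so that the pullback extends smoothly across the vertex. You instead give a closed-form $\tau_k(\alpha)=\epsilon_k\,d(\eta_k\wedge p_X^*\alpha)$ with $\eta_k$ a Dupont-type primitive; this avoids the bump function entirely because $\eta_k$ is globally smooth, and it gives a cleaner chain-map verification via $\partial_i^*\eta_k=0$ ($i\ge1$) and $\partial_0^*\eta_k=\mathrm{vol}_{\Delta^{k-1}}$. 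Your $\tau_1(\alpha)=d(t_1\,p_X^*\alpha)$ also satisfies $\iota_0^*\tau_1=d$ and $\iota_1^*\tau_1=0$, which are the only properties of $\tau_1$ the paper invokes later (in \eqref{equationiotatau} and in showing $a_\hs$ is well defined), and it visibly preserves the $F^p$-filtration (needed for Proposition \ref{FpEilenbergMacLane}), so your $\tau$ is a genuine drop-in replacement for the paper's.

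Two small caveats on your two sub-routes. The prism/Stokes argument for the explicit chain homotopy $\tau F\simeq\id$ would, as you say, require careful sign bookkeeping, and should be carried out explicitly if it is to replace the citation to Hopkins--Singer. The ``shortcut'' via the projective model structure is sound in its homological-algebra step (over a field, bounded-below complexes are cofibrant and fibrant, so quasi-isomorphisms are chain homotopy equivalences), but your justification that $F$ is a quasi-isomorphism by saying the simplicial abelian group ``models the shifted de Rham complex of $X$'' is circular unless you supply a reference or a proof — that identification is essentially the content of \cite[Corollary D.14]{hs}, so the shortcut doesn't actually avoid citing Hopkins--Singer, it just relocates the citation. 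A cleaner self-contained version of the shortcut: you already verify $F\circ\tau=\id$, so $F_*$ is split surjective on homology; it then suffices to show $\tau_*$ is surjective (equivalently $F_*$ injective), which can be done by a filtration/spectral-sequence argument or by your prism homotopy. Either way, the prism route is the one that would make the lemma fully self-contained.
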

Lemma \ref{tauInverseToIntegration} is essentially \cite[Corollary D.14]{hs}. However, there the homotopy inverse is not given, and so we will give its construction. 
Let $v_i\in\Delta^k$ be the point with $t_i=1$. Let $p_k\colon \Delta^k\backslash v_0\to \Delta^{k-1}$ be radial projection onto the $0$-th face, 
$$
p_k(t_0,\dots, t_k)=\left(\frac{t_1}{1-t_0},\dots, \frac{t_k}{1-t_0}\right).
$$
Let $g\colon [0,1]\to \mathbb R$ be smooth, and vanishing in a neighborhood of $0$ and equaling $1$ in a neighborhood of $1$. Recall that we identify $\Delta^1$ with the unit interval $[0,1]$ via $t\leftrightarrow (t,1-t)$.
We can then define  
\begin{align*}
    h_{k}\colon \Ah^n(\Delta^{k-1} \times X;\Vh_*) & \to \Ah^n(\Delta^{k} \times X;\Vh_*) \\
        \omega &\mapsto g(1-t_0)\cdot (p_k\times \id)^*\omega.
\end{align*}
The map $h$ is a contraction of the complex $\Ah^n(\DD^{\bullet} \times X;\Vh_*)$. 
We define $\tau$ recursively by $\tau_0=\id\colon \Ah^n(X;\Vh_*)_{\cl}\to\Ah^n(X;\Vh_*)_{\cl}$, and for $k>0$,
\begin{align}\label{deftau}
    \tau_k\colon \Ah^{n-k}(X;\Vh_*) &\to \Ah^n(\DD^k \times X;\Vh_*)_{\cl}\\
    \nonumber \tau_k &= d\circ h_{k}\circ \tau_{k-1}.
\end{align}

Let $\iota_t\colon X\to \Delta^1 \times X$, $\iota_t(x)=(t,x)$, be the inclusion at $t$ map. 
Then we have  
\begin{equation}
\label{equationiotatau}
    \iota_0^*\circ \tau_1 = d\quad \text{and}\quad \iota_1^*\circ\tau_1  =0.
\end{equation}
Using the techniques of \cite[Appendix D]{hs}, one can also prove: 

\begin{prop}\label{FpEilenbergMacLane}
Integration over the fiber $\int_{\DD^k \times X/X}$ restrict to a chain homotopy equivalence
$$
\xymatrix@C=1.5em{
\cdots\ar[r]^-{\partial^*}& F^p\Ah^n(\DD^k \times X;\Vh_*)_{\cl}\ar[r]^-{\partial^*}\ar[d] & \cdots \ar[r]^-{\partial^*}& F^p\Ah^n(\DD^1 \times X;\Vh_*)_{\cl}\ar[r]^-{\partial^*}\ar[d]& F^p\Ah^n(X;\Vh_*)_{\cl}\ar[d] \\
\cdots\ar[r]^-d &F^p\Ah^{n-k}(X;\Vh_*)\ar[r]^-d & \cdots \ar[r]^-d & F^p\Ah^{n-1}(X;\Vh_*)\ar[r]^-d & F^p\Ah^{n}(X;\Vh_*)_{\cl}
}
$$
with homotopy inverse, the restriction of $\tau$.
\end{prop}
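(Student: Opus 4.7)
The plan is to show that each ingredient appearing in the construction of the chain homotopy equivalence of Lemma \ref{tauInverseToIntegration} preserves the Hodge filtration $F^p$, so that the entire argument restricts verbatim to the filtered subcomplexes.

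First I would verify that integration over the fiber maps $F^p\Ah^n(\Delta^k\times X;\Vh_*)$ into $F^p\Ah^{n-k}(X;\Vh_*)$. Using the local description of the filtration from Remark \ref{remark:Hodge_filtration_on_product}, a form $\omega \in F^p\Ah^n(\Delta^k\times X;\Vh_*)$ can be written locally as $\sum f_{IJK}\, dt_K \wedge dz_I \wedge d\bar z_J$ with $f_{IJK}=0$ whenever $|I| < p$. Integration over the fiber eliminates the $dt_K$ factors by integrating the coefficient functions over the simplex and leaves the $dz_I\wedge d\bar z_J$ part untouched, so the result still satisfies the condition $|I|\ge p$ and hence lies in $F^p$.

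Second I would verify that $\tau_k$ preserves $F^p$ by induction on $k$. The base case $\tau_0=\id$ is immediate. For the inductive step, the map $h_k(\omega) = g(1-t_0)\cdot (p_k\times \id_X)^*\omega$ is a composition of two operations each of which preserves $F^p$: the map $p_k\times \id_X$ is the identity on the $X$ factor and therefore does not touch the $dz_I\wedge d\bar z_J$ components of $\omega$, while multiplication by the smooth function $g(1-t_0)$, which is pulled back from $\Delta^k$, only affects the coefficient functions. Since $d$ preserves $F^p$ by definition of the filtration (it is a filtration by subcomplexes of $\Ah^*(\Vh_*)$ as chain complexes), the formula $\tau_k = d\circ h_k\circ \tau_{k-1}$ shows that $\tau_k$ sends $F^p\Ah^{n-k}(X;\Vh_*)$ into $F^p\Ah^n(\Delta^k\times X;\Vh_*)_{\cl}$.

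Finally, the chain homotopies between $\tau\circ \int$, $\int\circ \tau$ and the respective identities in Lemma \ref{tauInverseToIntegration} are built from the contracting homotopies $h_k$ (this is the content of \cite[Appendix D]{hs}). By the same argument as above, each $h_k$ restricts to the filtered subcomplex, so the chain homotopies descend to $F^p$. Combining the three steps gives the desired chain homotopy equivalence on $F^p$. The main obstacle I anticipate is bookkeeping around the chain homotopies themselves: one has to keep track of the explicit formula expressing $\id - \tau\circ\int$ (and the analogous expression in the other direction) as a sum of compositions involving $h_k$, in order to be certain that no auxiliary operation breaking the filtration is introduced; once this is written down, the verification is entirely formal from the two observations above.
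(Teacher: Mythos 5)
Your proof is correct and is precisely the natural fill-in for the paper's terse remark (the paper simply cites the techniques of Hopkins--Singer \cite[Appendix D]{hs} and does not spell out any detail). You correctly identify the three things to check — that $\int_{\Delta^k\times X/X}$, $\tau_k$, and the contracting homotopies $h_k$ all restrict to the filtered subcomplexes — and the coordinate description of Remark \ref{remark:Hodge_filtration_on_product} is exactly the right tool to verify each one. One small remark: since $F^p\Ah^*$ on a product $\Delta^k\times X$ is defined via a closure in Definition \ref{def:Hodge_filtration_on_product}, one could alternatively argue more abstractly that $\int_{\Delta^k\times X/X}$, $(p_k\times\id)^*$, and multiplication by $g(1-t_0)$ are all continuous and preserve the dense subspace $\Ah^*(\Delta^k)\otimes F^p\Ah^*(X)$, hence its closure; but for the Hodge filtration the coordinate argument you give is cleaner and entirely sufficient. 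Your caution about the bookkeeping in the chain-homotopy formula is well placed, but as you note, every operation appearing in that formula (face maps, $h_k$, $d$) has already been checked to preserve $F^p$, so nothing new can break the filtration.
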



Now we can turn this into a concrete model for the presheaf of Eilenberg--MacLane spectra as follows. 
As pointed out in \cite[page 381]{hs}, and analogous to the construction of the map \eqref{eq:Mapsm_to_Omega}, restriction to the standard triangulation of $\DD^1 \times \Delta^k$ yields a weak equivalence of simplicial sets 
\begin{align*}
s_\bullet ^{\Ah} \colon 
\Ah^{n}(\DD^{\bullet} \times X;\Vh_*)_{\cl} \to \Omega^{\simp}\Ah^{n+1}(\DD^{\bullet} \times X;\Vh_*)_{\cl}.
\end{align*}

We now introduce the notation $\Ahs^*(\Vh_*)$ and show that we obtain  $\Omega$-spectra: 

\begin{prop}\label{prop:A_Sigma_is_a_spectrum}
The sequences 
\[
\Ahs^*(\Vh_*) \colon n \mapsto \Ah^{n}(\DD^{\bullet} \times -;\Vh_*)_{\cl}
\]
and, for every integer $p$, 
\[
F^p\Ahs^*(\Vh_*) \colon n \mapsto F^p\Ah^{n}(\DD^{\bullet} \times -;\Vh_*)_{\cl}
\]  
define $\Omega$-spectra in the category of sequential spectra of presheaves on $\Manc$. 
\end{prop}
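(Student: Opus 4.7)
The structure maps of the spectrum are given by the maps $s^{\Ah}_\bullet$ described just before the statement, and these are maps of simplicial presheaves by naturality in $X$. It remains to verify that for each $X \in \Manc$ and each $n$, the map
$$
s^{\Ah}_\bullet \colon \Ah^n(\Delta^\bullet \times X;\Vh_*)_{\cl} \to \Omega^{\simp}\Ah^{n+1}(\Delta^\bullet \times X;\Vh_*)_{\cl}
$$
is a weak equivalence of simplicial sets; this will provide the required $\Omega$-spectrum structure.

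Since both sides are simplicial abelian groups, they are Kan complexes and their homotopy groups are computable via the Dold--Kan correspondence from the normalized (equivalently, Moore) chain complex. Moreover, under Dold--Kan the simplicial loop space functor $\Omega^{\simp}$ corresponds, up to natural weak equivalence, to the shift functor $[-1]$ on chain complexes. Hence it suffices to show that $s^{\Ah}_\bullet$ induces a quasi-isomorphism after passage to normalized chains, once one accounts for this shift.

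By Lemma \ref{tauInverseToIntegration}, integration over the fiber provides a chain homotopy equivalence between the Moore complex of $\Ah^n(\Delta^\bullet \times X;\Vh_*)_{\cl}$ and the complex
$$
C^n_\bullet(X) \colon \quad \cdots \xto{d} \Ah^{n-k}(X;\Vh_*) \xto{d} \cdots \xto{d} \Ah^{n-1}(X;\Vh_*) \xto{d} \Ah^n(X;\Vh_*)_{\cl}
$$
with $\Ah^n(X;\Vh_*)_{\cl}$ sitting in degree $0$. I would then trace through the definition of $s^{\Ah}_\bullet$ as restriction along the standard triangulation of $\Delta^1 \times \Delta^k$, and using the compatibility of this restriction with integration over $\Delta^k$, verify that after passage to normalized chains $s^{\Ah}_\bullet$ corresponds to the evident quasi-isomorphism $C^n_\bullet(X) \xto{\sim} C^{n+1}_\bullet(X)[-1]$ which identifies $\Ah^{n-k}(X;\Vh_*)$ in degree $k$ on the left with the same group in degree $k+1$ on the right. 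The filtered version follows by the same argument, substituting Proposition \ref{FpEilenbergMacLane} for Lemma \ref{tauInverseToIntegration}.

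The main technical obstacle is the bookkeeping required to match the restriction-along-triangulation description of $s^{\Ah}_\bullet$ with the Dold--Kan interpretation of the simplicial loop space; this is essentially a direct extension of the constructions in \cite[Appendix D]{hs}, coupled with the explicit contracting homotopy $\tau$ built in \eqref{deftau}.
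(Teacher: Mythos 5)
Your argument for the objectwise part is sound and, in places, more explicit than the paper: you propose to verify that $s^{\Ah}_\bullet$ is a weak equivalence by passing through Dold--Kan and Lemma~\ref{tauInverseToIntegration}, whereas the paper simply cites \cite[page 381]{hs} for that fact and observes that the filtration is preserved by the simplicial identities (so $s^{\Ah}_\bullet$ restricts to $s^{F^p\Ah}_\bullet$). Your route via the shift functor is a legitimate way to fill in that citation.

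However, there is a genuine gap. The proposition claims that these sequences define $\Omega$-spectra \emph{in the category of sequential spectra of presheaves on $\Manc$}, equipped (as fixed in Section~4.1) with the local projective model structure. In that setting, being an $\Omega$-spectrum is a fibrancy condition that is strictly stronger than being an objectwise $\Omega$-spectrum of simplicial sets: one also needs each level $\Ah^{n}(\DD^{\bullet}\times -;\Vh_*)_{\cl}$ (resp. $F^p\Ah^{n}(\DD^{\bullet}\times -;\Vh_*)_{\cl}$) to be fibrant as a simplicial presheaf, i.e.\ to satisfy descent with respect to hypercovers of objects of $\Manc$. Your proof establishes the objectwise $\Omega$-spectrum property (Kan levels plus the adjoint structure maps being weak equivalences for each fixed $X$) but says nothing about why these presheaves satisfy hypercover descent. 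The paper addresses exactly this in its final sentence, invoking the descent property of the de~Rham presheaves together with \cite[Corollary~7.1]{dhi} (see also \cite{compact}) to conclude fibrancy in the local projective model structure. Without some argument of this kind, the conclusion as stated does not follow from what you have shown.

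A smaller remark: when you pass to the filtered version, you should also note explicitly that the structure maps actually \emph{restrict} to the filtered subcomplexes, i.e.\ that $s^{\Ah}_\bullet$ carries $F^p\Ah^n(\DD^{\bullet}\times X;\Vh_*)_{\cl}$ into $\Omega^{\simp}F^p\Ah^{n+1}(\DD^{\bullet}\times X;\Vh_*)_{\cl}$. This is immediate because the maps are built from face maps and the triangulation of $\Delta^1\times\Delta^k$, which do not interact with the filtration, but it is a logically prior point to the quasi-isomorphism claim you make using Proposition~\ref{FpEilenbergMacLane}.
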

\begin{proof}
The maps $s_\bullet ^{\Ah}$ provide the objectwise structure maps of the first spectrum. 
As the filtration on forms on $X$ is independent of the simplicial identities which define the map, 
the maps $s_\bullet ^{\Ah}$ restrict to natural weak equivalences 
\begin{align*}
s_\bullet ^{F^p\Ah} \colon 
F^p\Ah^{n}(\DD^{\bullet} \times X;\Vh_*)_{\cl} \to \Omega^{\simp}F^p\Ah^{n+1}(\DD^{\bullet} \times X;\Vh_*)_{\cl}.
\end{align*}
Since $\Ah^{n}(\DD^{\bullet} \times X;\Vh_*)_{\cl}$ is a simplicial abelian group and hence a Kan complex for each $n$, the sequence $n \mapsto \Ah^{n}(\DD^{\bullet} \times X;\Vh_*)_{\cl}$ is an $\Omega$-spectrum of simplicial sets for every $X$. 
Moreover, since each of the simplicial presheaves $\Ah^{n}(\DD^{\bullet} \times -;\Vh_*)_{\cl}$ and $F^p\Ah^{n}(\DD^{\bullet} \times -;\Vh_*)_{\cl}$ satisfy descent with respect to hypercovers, we obtain fibrant objects in the local projective model structure of presheaves of sequential spectra on $\Manc$ by \cite[Corollary 7.1]{dhi} (see also \cite{compact}).  
This proves the assertion. 
\end{proof}


Next we check that the maps $\phi^n_{\sm}$ induce a natural map of simplicial spectra 
\begin{align*}
\Mapsm(\DD^{\bullet} \times X,QMU) \xto{\phi_{\sm}} \Ahs^*(X,\Vh_*). 
\end{align*}
Since the maps $\phi^n_{\sm}$ are natural in $\DD^{\bullet} \times X$, a sequence $g_0,\ldots, g_{k}$ of $(k+1)$-simplices in $\Mapsm(\DD^{\bullet} \times X,QMU_{n+1})$ satisfying the relations corresponding to \eqref{eq:simplicialLoopSpaceIdentities_A_bullet}   induces a sequence $\phi^{n+1}_{\sm}(g_0),\ldots, \phi^{n+1}_{\sm}(g_{k})$ of $(k+1)$-simplices in $\Ah^{n+1}(\DD^{\bullet} \times X;\Vh_*)_{\cl}$ satisfying similar relations. 
Thus $\phi^n_{\sm}$ induces a natural map on the simplicial loop spaces as well. 
In fact, we get a commutative diagram of the form 
\begin{align*}
\xymatrix{
\Mapsm(\DD^{\bullet} \times X,QMU_{n}) \ar[r]^-{\phi^n_{\sm}} \ar[d] & \Ah^{n}(\DD^{\bullet} \times X;\Vh_*)_{\cl} \ar[d] \\
\Omega^{\simp}\Mapsm(\DD^{\bullet} \times X,QMU_{n+1}) \ar[r]^-{\phi^{n+1}_{\sm}} & \Omega^{\simp}\Ah^{n+1}(\DD^{\bullet} \times X;\Vh_*)_{\cl},
}
\end{align*}
since both vertical maps arise from the restriction to the standard triangulation of $\Delta^1 \times \Delta^k$. 


\subsection{Comparison of homotopy models}
\label{subsec:comparison_of_htpy_models}

Now we construct the comparison map.

The map $MU\to QMU$ induces a map $\Sing(MU)\to\Sing(QMU)$. 
We then obtain a map $\sing(MU)\to \Sing(QMU)$ by precomposing with the isomorphism of simplicial sets $\sing(MU)\cong \Sing(MU)(\pt)$. 
It follows from \cite[Proposition 3.11]{hfc} that this map induces an isomorphism on stalks and so is a weak equivalence.  
Let $\Mapsm(-\times \Delta^{\bullet},QMU) \to \Sing(QMU)$ be the map of presheaves of spectra induced by forgetting smoothness. This is an objectwise weak equivalence, since every continuous map is homotopic to a smooth map by Whitney's approximation theorem. 
We also note that this is a map between fibrant objects by \cite[Lemma 3.12]{hfc}. 
Let $H(\Ah^*(\Vh_*)) \to \Ahs^*(\Vh_*)$ be the map of presheaves of spectra which for the $n$-th spaces is given by the map $\tau$ defined in Lemma \ref{tauInverseToIntegration}. 
Its homotopy inverse is induced by integrating over the fiber. 
Similarly, let $H(F^p\Ah^*(\Vh_*)) \to F^p\Ahs^*(\Vh_*)$ be the map of presheaves of spectra induced by the restriction of $\tau$. Again, this map has a homotopy inverse which is induced by integrating over the fiber. 
In total, we have a diagram of presheaves of spectra 
\begin{align}\label{eq:map_of_diagrams_for_htpy_pb}
\xymatrix{
H(F^p\Ah^*(\Vh_*)) \ar[r] \ar@/^/[dd]^-{\simeq} & H(\Ah^*(\Vh_*)) \ar@/^/[dd]^-{\simeq} & \ar[l]_-{\phi} \sing(MU) \ar[d]^-{\simeq} \\
 & & \Sing(QMU) \\
F^p\Ahs^*(\Vh_*) \ar@/^/[uu]^-{\simeq} \ar[r] & \Ahs^*(\Vh_*) \ar@/^/[uu]^-{\simeq} & \ar[l]_-{\phi_{\sm}} \Mapsm(\DD^{\bullet} \times -,QMU) \ar[u]_-{\simeq}.
}    
\end{align}

\begin{defn}\label{def:MUhs(p)}
We write $\MUhs(p)$ for the homotopy of the bottom row of diagram \eqref{eq:map_of_diagrams_for_htpy_pb}. 
\end{defn}

\begin{remark}\label{rem:holom_vs_smooth_forms} 
Recall that the homotopy pullback of the top row of diagram \eqref{eq:map_of_diagrams_for_htpy_pb} is $\MUD(p)$ by definition. 
We note that in \cite[\S 4]{hfc} the complex of holomorphic forms $\Omega^*(X;\Vh_*)$ is used instead of smooth forms to define $\MUD(p)$. 
However, since the canonical maps $\Omega^*(\Vh_*) \to \Ah^*(\Vh_*)$ and $\Omega^{*\geqslant p}(\Vh_*)\to F^p\Ah^*(\Vh_*)$ are quasi-isomorphisms the homotopy pullback of \eqref{eq:map_of_diagrams_for_htpy_pb} represents Hodge filtered cobordism groups which are canonically isomorphic to the ones of \cite[Definition 4.2]{hfc}.  
\end{remark}

\begin{theorem}\label{thm:htpy_pullbacks_MUD_MUhs_are_we}
The homotopy pullbacks $\MUD(p)$ and $\MUhs(p)$ 
are isomorphic in the homotopy category $\hoSpPre$ of presheaves of spectra on $\Manc$. 
\end{theorem}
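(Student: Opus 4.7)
The plan is to show that diagram \eqref{eq:map_of_diagrams_for_htpy_pb} provides a zigzag of pointwise weak equivalences between the two defining homotopy pullback diagrams which commutes up to homotopy in $\hoSpPre$. Since homotopy pullbacks of fibrant diagrams are invariant under such zigzags, this immediately yields the desired isomorphism $\MUD(p)\cong \MUhs(p)$ in $\hoSpPre$.

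The four vertical arrows in \eqref{eq:map_of_diagrams_for_htpy_pb} are all known to be weak equivalences: $H(\Ah^*(\Vh_*))\xrightarrow{\simeq}\Ahs^*(\Vh_*)$ and $H(F^p\Ah^*(\Vh_*))\xrightarrow{\simeq}F^p\Ahs^*(\Vh_*)$ combine Propositions \ref{DoldKanA(V)} and \ref{FpEilenbergMacLane} with the stable Dold--Kan correspondence and the Eilenberg--MacLane construction $H$; the map $\sing(MU)\xrightarrow{\simeq}\Sing(QMU)$ is induced by the fibrant replacement $MU\to QMU$, cf.\ \cite[Proposition 3.11]{hfc}; and $\Mapsm(\DD^{\bullet}\times -, QMU)\xrightarrow{\simeq}\Sing(QMU)$ is Whitney's approximation theorem applied levelwise and on homotopies. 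Moreover, all presheaves of spectra involved are fibrant by Proposition \ref{prop:A_Sigma_is_a_spectrum} and \cite[Lemma 3.12]{hfc}. The left-hand square commutes strictly, because both horizontal maps are induced by the sheaf-level inclusion $F^p\Ah^*(\Vh_*)\hookrightarrow\Ah^*(\Vh_*)$, while the two vertical maps both arise from the chain map $\tau$ of Lemma \ref{tauInverseToIntegration}, which respects the filtration by Proposition \ref{FpEilenbergMacLane}.

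The expected main obstacle is homotopy commutativity of the right-hand square, which compares the abstract cocycle $\phi$ with the Chern--Weil/Mathai--Quillen cocycle $\phi_{\sm}$. Using the canonical isomorphism $\sing(MU)\cong \Mapsm(\DD^{\bullet}\times -, QMU)$ in $\hoSpPre$ supplied by the zigzag through $\Sing(QMU)$, both composites define morphisms $\sing(MU)\to \Ahs^*(\Vh_*)$ in $\hoSpPre$. Since $\Ah^*(\Vh_*)$ is a fine resolution of $\Vh_*$ on $\Manc$, the set of homotopy classes $[\sing(MU), \Ahs^*(\Vh_*)]_{\hoSpPre}$ is naturally identified with $H^0(MU;\Vh_*)$, hence is determined by a compatible family of classes in $H^{2m}(MU(m,l);\Vh_{2m})$ (Proposition \ref{FunctionsFromMapA}). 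The class of $\phi$ is, by construction, the constant-coefficient realisation of the genus. The class of $\phi_{\sm}$, at the universal level $(m,l)$, is represented by the closed form $\phi_{m,l}=U_{m,l}\wedge \pi_{m,l}^*K(\nabla_{m,l})$ of \eqref{definitionPhiml}; since $U_{m,l}$ represents the Thom class of $\gamma_{m,l}$ and $K(\nabla_{m,l})$ is a Chern--Weil representative of the universal characteristic class, $\phi_{m,l}$ represents the same universal genus class, with compatibility across $(m,l)$ granted by Proposition \ref{phimlAreCompatible}. The two morphisms therefore coincide in $\hoSpPre$, so the right-hand square commutes up to homotopy, and the induced map of homotopy pullbacks is a weak equivalence, yielding the theorem.
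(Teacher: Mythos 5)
Your overall strategy coincides with the paper's: both reduce the theorem to the zigzag of weak equivalences in diagram~\eqref{eq:map_of_diagrams_for_htpy_pb} and the claim that this zigzag carries one defining homotopy pullback square to the other. But you part ways at the essential step. The paper packages the comparison into the abstract Lemma~\ref{lemma:htpy_pullback_lemma}, which forms the auxiliary homotopy pullback $\sing(MU)\times^h_{\Sing(QMU)}\Mapsm(\DD^\bullet\times-,QMU)$ and then appeals to homotopy-commutativity of the resulting right-hand square. You instead supply a direct argument for that commutativity: you invert the weak equivalences through $\Sing(QMU)$ in $\hoSpPre$, view both $\tau\circ\phi$ and $\phi_\sm$ as morphisms $\sing(MU)\to\Ahs^*(\Vh_*)$, and argue they agree because $[\sing(MU),\Ahs^*(\Vh_*)]_{\hoSpPre}\cong H^0(MU;\Vh_*)$ and both land on the universal genus class, checked on each $MU(m,l)$ via $\phi_{m,l}=U_{m,l}\wedge\pi_{m,l}^*K(\nabla_{m,l})$ and the fact that $U_{m,l}$ represents the Thom class. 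This is a genuinely different key step, and a useful one: the homotopy-commutativity asserted at the end of the proof of Lemma~\ref{lemma:htpy_pullback_lemma} is not a formal consequence of that lemma's stated hypotheses (which say nothing about the relation between the maps $A_i\to B_i$ and the bridge $A_i\to A_0$), so an argument of exactly your flavour is what gives the conclusion content in this specific application. What your approach buys is explicitness about where the mathematics actually lives, namely in the coincidence of the Mathai--Quillen/Chern--Weil cocycle and the topological genus cocycle. To make it airtight you still need to record two things: (i) the identification $[\sing(MU),\Ahs^*(\Vh_*)]_{\hoSpPre}\cong H^0(MU;\Vh_*)$, which follows from the Quillen adjunction between the constant-presheaf functor and evaluation at the point together with the fibrancy of $\Ahs^*(\Vh_*)$ from Proposition~\ref{prop:A_Sigma_is_a_spectrum}; and (ii) that a compatible family of classes in $H^{2m}(MU(m,l);\Vh_*)$ indeed determines a class of $MU$, i.e.\ the vanishing of the relevant $\lim^1$ term, which is unproblematic here because $\Vh_*$ is a $\Q$-vector space and $H_*(MU)$ is free.
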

\begin{proof}  
The assertion is a consequence of the observation on homotopy pullbacks in model categories formulated in the following lemma.
\end{proof}

\begin{lemma}\label{lemma:htpy_pullback_lemma}
Let $\Ch$ be a proper simplicial model category. 
We consider the following diagram 
\begin{align}\label{eq:map_of_diagrams_for_htpy_pb_lemma}
\xymatrix{
C_1 \ar[r] \ar@/^/[dd]^-{\simeq} & B_1  \ar@/^/[dd]^-{\simeq} & \ar[l] A_1\ar[dr]^-{\simeq} & \\
 & & & A_0 \\
C_2 \ar@/^/[uu]^-{\simeq} \ar[r] & B_2 \ar@/^/[uu]^-{\simeq} & \ar[l] A_2 \ar[ur]_{\simeq} & 
}    
\end{align}
in which all vertical maps are weak equivalences and in which the left-hand squares commute. 
We also assume that $A_0$ and $A_2$ are fibrant. 
Then the homotopy pullbacks $C_1\times^h_{B_1}A_1$ and $C_2\times^h_{B_2}A_2$ of the top and bottom row, respectively, are weakly equivalent.  
\end{lemma}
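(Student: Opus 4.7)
The plan is to invoke the standard invariance of homotopy pullbacks in a proper simplicial model category: a levelwise weak equivalence of commuting cospans induces a weak equivalence on homotopy pullbacks. The task is therefore to exhibit a zig-zag of such cospan equivalences connecting $(C_1 \to B_1 \leftarrow A_1)$ and $(C_2 \to B_2 \leftarrow A_2)$.

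The commutativity of the left-hand squares together with the vertical weak equivalences between the $C_i$ and between the $B_i$ already supplies the left two columns of a cospan morphism in both directions. For the right-hand column, I would use the intermediate fibrant object $A_0$ as a bridge. Factoring the weak equivalence $A_2 \to A_0$ via the axioms of $\Ch$ as a trivial cofibration followed by a trivial fibration, and combining this with the weak equivalence $A_1 \to A_0$, one can (after a cofibrant replacement of $A_1$ if necessary) lift through the trivial fibration to produce a common model $\tilde{A}$ equipped with weak equivalences to both $A_1$ and $A_2$. The simplicial structure on $\Ch$ then lets one assemble an intermediate cospan through $\tilde{A}$ fitting between the two given ones; properness of $\Ch$ ensures that the resulting objectwise weak equivalences of cospans induce weak equivalences on homotopy pullbacks, via the standard argument of replacing the right-hand maps by fibrations and then propagating weak equivalences across the strict pullback.

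The main technical subtlety, and the point at which the specific structure of the intended application enters, is the compatibility of the right-hand maps $A_i \to B_i$ through the zig-zag $A_1 \to A_0 \leftarrow A_2$: one must verify that the two composite routes from $A_1$ to (any common replacement of) $B_1 \simeq B_2$ agree up to homotopy with the corresponding routes from $A_2$. This compatibility is not literally part of the stated hypotheses and must be read as implicit in the diagram. In the setting of Theorem \ref{thm:htpy_pullbacks_MUD_MUhs_are_we}, it is supplied by the construction of the Mathai--Quillen Thom forms, which ensures that $\phi_{\sm}$ agrees with $\phi$ on $\sing(MU) \subset \Mapsm(\DD^\bullet \times -, QMU)$ after the homotopy identifications established in the preceding subsection. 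Once this homotopy commutativity is in hand, the remainder of the proof is a routine manipulation of the standard model-categorical machinery for homotopy pullbacks in proper simplicial model categories.
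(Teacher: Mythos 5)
Your plan is essentially the paper's: replace $A_1$ and $A_2$ by a common model built over $A_0$, then apply the standard invariance of homotopy pullbacks in a proper model category to the resulting zig-zag of cospans. The paper does this more cleanly by taking $\tilde A := A_1 \times^h_{A_0} A_2$ directly: since both $A_1 \to A_0$ and $A_2 \to A_0$ are weak equivalences, properness immediately gives that the two projections $\tilde A \to A_1$ and $\tilde A \to A_2$ are weak equivalences, with no factoring or lifting required. Your factor-and-lift construction is murkier as written — the lift against the trivial fibration lands in the factorization intermediate $\hat A_2$ rather than in $A_2$ itself, and to get the needed map $\tilde A \to A_2$ you would still have to compose with a retraction $\hat A_2 \to A_2$ (available because $A_2$ is assumed fibrant), a step you do not spell out; the homotopy-pullback route sidesteps all of this. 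Your observation that the homotopy-commutativity of the right-hand square, once the common model is in place, is not literally among the stated hypotheses is genuine and correct: the paper's proof simply asserts that the right-hand square in its final displayed diagram commutes up to homotopy without deriving it from the hypotheses of the lemma, and in the application to Theorem \ref{thm:htpy_pullbacks_MUD_MUhs_are_we} this amounts to the nontrivial compatibility between $\phi$ and $\phi_{\sm}$ over $A_0 = \Sing(QMU)$, exactly as you say. So you have located the one point where the lemma, as stated, is relying on an implicit hypothesis.
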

\begin{proof}
First we take the homotopy pullback $A_1\times^h_{A_0}A_2$ of the right-hand vertical maps. 
The two induced maps $A_1\times^h_{A_0}A_2 \to A_1$ and $A_1\times^h_{A_0}A_2 \to A_2$ are weak equivalences. 
Hence the induced maps on homotopy pullbacks  \begin{align*}
C_1\times^h_{B_1}(A_1\times^h_{A_0}A_2) \to C_1\times^h_{B_1}A_1 ~ \text{and} ~ 
C_2\times^h_{B_2}(A_1\times^h_{A_0}A_2) \to C_2\times^h_{B_2}A_2
\end{align*}
are weak equivalences. 
Thus it remains to observe that the homotopy pullbacks $C_1\times^h_{B_1}(A_1\times^h_{A_0}A_2)$ and $C_2\times^h_{B_2}(A_1\times^h_{A_0}A_2)$ are weakly equivalent. 
This follows from the following diagram 
\begin{align*}
    \xymatrix{
C_1 \ar[r] \ar@/^/[d]^-{\simeq} & B_1  \ar@/^/[d]^-{\simeq} & \ar[l] A_1\times^h_{A_0}A_2 \ar@{=}[d] \\
C_2 \ar@/^/[u]^-{\simeq} \ar[r] & B_2 \ar@/^/[u]^-{\simeq} & \ar[l] A_1\times^h_{A_0}A_2 
}    
\end{align*}
in which the right-hand square commutes up to homotopy and the vertical maps are weak equivalences. 
\end{proof}

As a consequence of Theorem  \ref{thm:htpy_pullbacks_MUD_MUhs_are_we} we get that the respective cohomology groups represented by the two homotopy pullbacks are isomorphic. 
This enables us to prove the following result: 

\begin{theorem}\label{thm:map_MUD_to_MUhs_is_an_iso}
Let $X$ be a complex manifold and $n,p$ be integers. 
Then there is a natural isomorphism of groups  
\begin{align*}
\MUD^n(p)(X) \cong \MUhs^n(p)(X).
\end{align*}
\end{theorem}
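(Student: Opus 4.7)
The strategy is to deduce the theorem directly from Theorem \ref{thm:htpy_pullbacks_MUD_MUhs_are_we}, which already supplies an isomorphism $\MUhs(p)\cong \MUD(p)$ in the homotopy category $\hoSpPre$ of presheaves of spectra on $\Manc$. Since by definition
\[
\MUD^n(p)(X) = \Hom_{\hoSpPre}(\Sigma^{\infty}X_+, \Sigma^n \MUD(p)),
\]
it suffices to establish the analogous representability statement
\[
\MUhs^n(p)(X) \cong \Hom_{\hoSpPre}(\Sigma^{\infty}X_+, \Sigma^n \MUhs(p))
\]
for the bottom homotopy pullback; the desired isomorphism is then obtained by composing these two identities.

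To prove the representability, the plan is to verify that the presheaf of spectra $\MUhs(p)$ of Definition \ref{def:MUhs(p)} is locally fibrant, meaning that at each level it is objectwise fibrant, satisfies hypercover descent, and the structure maps exhibit it as an $\Omega$-spectrum. Each of the three terms entering its defining homotopy pullback has these properties: the presheaves of filtered and unfiltered forms by Proposition \ref{prop:A_Sigma_is_a_spectrum}, and the smooth mapping spectrum $\Mapsm(\DD^{\bullet} \times -,QMU)$ by an analogous argument using the descent statement for $\Sing(QMU)$ recorded in \cite[Theorem 1.3]{di} together with the fact that smooth approximation produces an objectwise weak equivalence to $\Sing(QMU)$. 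Since the local projective model structure on presheaves of spectra is right proper and homotopy pullbacks are formed levelwise, $\MUhs(p)$ inherits local fibrancy from its defining diagram. One also needs to observe that the $n$-th level space of this presheaf of spectra, evaluated at $X$, coincides with the simplicial set $\MUhs(p)_n(X)$ of Definition \ref{def:MU_hs(X)}; this is immediate by comparing \eqref{eq:htpy_cartesian_MU_hs(X)} with the $n$-th level of the bottom row of \eqref{eq:map_of_diagrams_for_htpy_pb} evaluated at $X$.

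With local fibrancy in place, standard representability in $\hoSpPre$ yields
\[
\MUhs^n(p)(X) = \pi_0(\MUhs(p)_n(X)) \cong \Hom_{\hoSpPre}(\Sigma^{\infty}X_+, \Sigma^n \MUhs(p)),
\]
and composition with the zig-zag of Theorem \ref{thm:htpy_pullbacks_MUD_MUhs_are_we} produces the claimed natural isomorphism with $\MUD^n(p)(X)$. Naturality in $X$ is automatic from the functoriality of the representable functor $\Hom_{\hoSpPre}(\Sigma^{\infty}(-)_+, \Sigma^n \MUhs(p))$. The essential mathematical content of the argument is already contained in Theorem \ref{thm:htpy_pullbacks_MUD_MUhs_are_we}, and I expect the only real obstacle here to be the bookkeeping verification of local fibrancy for $\Mapsm(\DD^{\bullet}\times -, QMU)$, specifically the descent statement at the level of smooth maps rather than continuous maps.
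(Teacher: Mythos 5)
Your proposal is correct and takes essentially the same route as the paper's proof: reduce to showing that $\MUhs^n(p)(X)\cong\Hom_{\hoSpPre}(\Sigma^{\infty}X_+,\Sigma^n\MUhs(p))$ by establishing that the three corners of the defining homotopy pullback are fibrant in the local projective model structure (hypercover descent plus $\Omega$-spectrum structure), so the pullback is computed objectwise and the $n$-th level is the simplicial set of Definition \ref{def:MU_hs(X)}. The one point you flag as a possible obstacle — descent for $\Mapsm(\DD^{\bullet}\times -,QMU)$ — the paper dispatches by citing \cite[Lemma 3.12]{hfc}, and it invokes \cite[Proposition 2.7]{aj} for the objectwise computation of the homotopy pullback rather than arguing from right properness, but these are just alternate references for the same facts.
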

\begin{proof}
By Theorem \ref{thm:htpy_pullbacks_MUD_MUhs_are_we} it remains to relate the groups $\MUhs^n(p)(X)$ of Definition \ref{def:MU_hs(X)} to $\MUhs(p)$-cohomology. 
Each of the presheaves of spectra in the bottom row of diagram \eqref{eq:map_of_diagrams_for_htpy_pb} satisfy  levelwise hypercover descent and the structure maps are objectwise weak equivalences. Hence each of the presheaves of spectra in the bottom row is an $\Omega$-spectrum. 
Hence the $n$-th space $\MUhs(p)_n$ of $\MUhs(p)$ represents $\MUhs(p)$-cohomology, i.e., there is a natural isomorphism
\begin{align*}
\Hom_{\hoSpPre}(\Sigma^{\infty}(X_+),\Sigma^n\MUhs(p)) 
\cong \Hom_{\hosPre}(X,\MUhs(p)_n).
\end{align*}
Moreover, we can compute the simplicial presheaf  $\MUhs(p)_n$ levelwise as the homotopy pullback of the $n$th spaces of the presheaves of spectra in the bottom row of \eqref{eq:map_of_diagrams_for_htpy_pb}. 
By \cite[Proposition 2.7]{aj}, we can compute this homotopy pullback objectwise in the sense that there is a natural isomorphism 
\begin{align*}
\Hom_{\hosPre}(X,\MUhs(p)_n) \cong \pi_0(\MUhs(p)_n(X)). 
\end{align*}
Since we have $\MUhs^n(p)(X) = \pi_0(\MUhs(p)_n(X))$ by definition of the left hand side, this proves the assertion of the theorem. 
\end{proof}

As we pointed out in section \ref{sec:axioms}, 
the functor $\MUD(p)$ is a Hodge filtered cohomology extension over $(MU^*,\phi^p)$ on $\Manc$. 
We conclude this section with the observation that the isomorphism of Theorem \ref{thm:map_MUD_to_MUhs_is_an_iso} is one of Hodge filtered extensions. 

\begin{remark}
\label{rem:MUD_MUhs_iso_is_Hodge_filtered}
Let $I_\Dh$, $R_\Dh$ and $a_\Dh$ denote the structure maps of $\MUD(p)$ as a Hodge filtered extension of $(MU^*,\phi^p)$ in the sense of Definition \ref{def:axioms} and Example \ref{ex:ED_is_a_HF_extension}.  
We observed in Remark \ref{rem:MUhs_is_a_Hodge_filtered_extension} that $\MUhs^*(p)$ is a Hodge filtered extension over $(MU^*,\phi^p)$. 
Now it suffices to observe that the zig-zag of weak equivalences we constructed between $\MUD(p)$ and $\MUhs(p)$ actually is a zig-zag between the homotopy fiber sequences that induce the respective structure maps. 
Hence we conclude that 
the isomorphism $\MUD^*(p)(X)\cong \MUhs^*(p)(X)$ of Theorem \ref{thm:map_MUD_to_MUhs_is_an_iso} is an isomorphism of Hodge filtered extensions of $(MU^*,\phi^p)$ over $\Manc$.
\end{remark}


\section{From homotopy to geometry}
\label{section:from_htpy_to_geom}

We are now going to use the Pontryagin--Thom construction to define a natural isomorphism of  groups
\[
\kappa \colon \MUhs^n(p)(X) \to MU^n(p)(X)
\]
for every $n$ and $p$ and every $X \in \ManF$.

\subsection{Representatives in the homotopy pullback}
 
First we will represent elements of  $\MUhs^n(p)(X)$ in a suitable way. 
Recall the map $A$ from Definition \ref{defA}. 
Let 
\[
\Mapf(X,QMU_n)
\]
denote the set of maps $g\colon X\to QMU_n$ such that $g=A(\gp)$ for some map $\gp\colon \Sigma^k X_+\to MU(m,l)$ which  is smooth on $\gp^{-1}(\gamma_{m,l})$, and transverse to the $0$-section $\iota_{m,l}\colon \Grml\to \gamma_{m,l}$.

\begin{theorem}\label{thm:HFC_concrete_triples_transversal}
Let $(X,F^*)\in \ManF$, and let $n$ and $p$ be integers. 
For every element $\gamma \in \MUhs^n(p)(X)$, there is a representative $(g, \omega, h)$ as in  Lemma \ref{thm:HFC_concrete_triples} with $g \in \Mapf(X, QMU_n)$.
\end{theorem}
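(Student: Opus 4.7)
My plan is as follows. Fix a representative $(g, \omega, h)$ of $\gamma$ given by Lemma \ref{thm:HFC_concrete_triples}, and write $g = A(\gsm)$ for some $\gsm \in \mappsm(\Sigma^k X_+, MU(m,l))$ with $n + k = 2m$. The goal is to replace $\gsm$ by a homotopic map transverse to the zero section $\iota_{m,l}$, and correspondingly adjust $\omega$ and $h$, producing a homotopic triple with the desired transversality property.

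The first step is Thom transversality. Since $\gsm$ is pointed continuous and $\Grml \subset \gamma_{m,l}$ is compact, continuity at $\infty$ forces $\gsm$ to be proper on preimages of compacta in $\gamma_{m,l}$; in particular $\gsm^{-1}(\Grml)$ is a compact subset of the open set $U := \gsm^{-1}(\gamma_{m,l}) \subset \Sigma^k X_+$. Applying Thom's transversality theorem to the smooth map $\gsm|_U$ and the zero section $\iota_{m,l}$ yields a smooth perturbation $\gsm^\pitchfork$ of $\gsm$, supported in a compact neighborhood of $\gsm^{-1}(\Grml)$ inside $U$, with $\gsm^\pitchfork|_U$ transverse to $\iota_{m,l}$, along with a compactly supported smooth homotopy $G_\sm$ from $\gsm|_U$ to $\gsm^\pitchfork|_U$. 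Extending $G_\sm$ by $\gsm$ outside its support gives a pointed continuous map $G \colon \Delta^1 \times \Sigma^k X_+ \to MU(m,l)$ that is smooth on $G^{-1}(\gamma_{m,l})$. Set $\gup := A(\gsm^\pitchfork) \in \Mapf(X, QMU_n)$ and $\gb := A(G) \in \mapsm(\Delta^1 \times X, QMU_n)$, so that $\gb$ restricts to $g$ and $\gup$ at the two endpoints of $\Delta^1$.

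Next, set $\omb := \pi^*\omega$, where $\pi \colon \Delta^1 \times X \to X$ is the projection; by Definition \ref{def:Hodge_filtration_on_product}, $\omb$ lies in $F^p\Ah^n(\Delta^1 \times X; \Vh_*)_{\cl}$ and restricts to $\omega$ at both endpoints. Then construct a closed form $h_\bullet \in \Ah^n(\Delta^1 \times \Delta^1 \times X; \Vh_*)_{\cl}$ satisfying
\[
(\iota^1_0)^* h_\bullet = h, \quad (\iota^2_0)^* h_\bullet = \omb, \quad (\iota^2_1)^* h_\bullet = \phi^n_\sm(\gb).
\]
Compatibility of these three prescribed forms at the two corners where pairs of the corresponding faces meet holds by direct inspection, using the defining relations $\iota_0^* h = \omega$, $\iota_1^* h = \phi^n_\sm(g)$ and the definitions of $\omb$ and $\gb$. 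Setting $h^\pitchfork := (\iota^1_1)^* h_\bullet$, the triple $(\gup, \omega, h^\pitchfork)$ is of the form required by Lemma \ref{thm:HFC_concrete_triples}, and $(\gb, \omb, h_\bullet)$ witnesses that $\gamma$ is represented by it; since $\gup \in \Mapf(X, QMU_n)$, this would complete the proof.

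The main technical step is the construction of $h_\bullet$. First, extend the three prescribed closed forms on $L \times X \subset \Delta^1 \times \Delta^1 \times X$, where $L$ denotes the union of the three prescribed faces of the square, to a smooth form $\tilde h$ on the whole square using a partition of unity; this is possible since the prescribed forms agree on the two corners of $L$. Then $d\tilde h$ is closed on $\Delta^1 \times \Delta^1 \times X$ and vanishes on $L \times X$. Since $L$ is a deformation retract of $\Delta^1 \times \Delta^1$, the relative de Rham cohomology $H^*(\Delta^1 \times \Delta^1 \times X, L \times X)$ vanishes, so one may write $d\tilde h = d\beta$ for a smooth form $\beta$ on $\Delta^1 \times \Delta^1 \times X$ vanishing on $L \times X$. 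Setting $h_\bullet := \tilde h - \beta$ then gives the desired closed form.
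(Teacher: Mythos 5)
Your proof follows the same overall strategy as the paper's — first perturb to transversality via Thom's theorem, then realize the homotopy of maps as a homotopy of triples — but you re-derive the key step (producing the new $h$ and the witness $h_\bullet$) from scratch instead of invoking the paper's Lemma \ref{lemma:homotopy_induces_homotopy_on_forms}. The paper's lemma gives an \emph{explicit formula}: $h_1 := h_0 + \phi_{\sm}(\gb) - \pi_X^*\phi_{\sm}(g_0)$, and
$\hb := \pi_{\Delta^1 \times X}^*(h_0 -\pi_X^*\phi_{\sm}(g_0)) + \phi_{\sm}(G_{\bullet})$ for a reparametrized homotopy $G_\bullet$. This sidesteps all questions of extension and cohomology, and it has the advantage of being reused elsewhere in the paper. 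Your abstract route (extend the boundary data to a form $\tilde h$ on the square, then correct by $\beta$ using vanishing of relative de Rham cohomology) is conceptually equivalent and logically sound, but it suppresses two technical points. First, the existence of a smooth extension $\tilde h$ from $L\times X$ is not quite the kind of thing a naive partition of unity delivers: $L$ has corners, and agreement of the prescribed forms at the corners is only on the corner itself; one needs collar-type constructions (for instance, extend $h$ from $\Delta^1\times\{0\}\times X$ by the pullback under the obvious projection, then add a cutoff times $\pi_{23}^*(\phi_\sm^n(\gb)-\pi_X^*\phi_\sm^n(g))$) — this works, but it is essentially reconstructing the paper's explicit formula. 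Second, when you conclude that $d\tilde h = d\beta$ with $\beta|_{L\times X}=0$ from vanishing of $H^{n+1}(\Delta^1\times\Delta^1\times X, L\times X)$, you are implicitly using that the subcomplex of forms vanishing on $L\times X$ computes the relative cohomology; this is true here but requires $L\times X$ to have a smooth deformation retract of the ambient space that fixes it pointwise, which again is a corner-smoothing issue worth naming. You correctly skip the Freudenthal step from the paper's proof — since $g\in\Mapsm(X,QMU_n)$ already means $g=A(\gsm)$ by definition, so that appeal is superfluous. One small inaccuracy: your claim that $\gsm^{-1}(\Grml)$ is compact is false for non-compact $X$ (the Thom space $\Sigma^kX_+$ is a colimit of one-point compactifications, not a one-point compactification itself), but this is immaterial: Thom transversality allows an arbitrarily $C^\infty$-small perturbation, which is all one needs for the homotopy $G$ to remain pointed and smooth on the preimage of $\gamma_{m,l}$.
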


To prove Theorem \ref{thm:HFC_concrete_triples_transversal} we are going to use the following 
general fact about homotopy pullbacks. 
Since it is important for the later arguments, we provide a detailed proof. 

\begin{lemma}\label{lemma:homotopy_induces_homotopy_on_forms}

Let $g_{\bullet} \colon \Delta^1 \times X \to QMU_n$ be a homotopy between $g_0=\iota_0^*\gb$ and $g_1=\iota_1^*\gb$. 
Assume we have a triple $(g_0,\omega,h_0)$ which represents an element in $\MUhs^n(p)(X)$. 
Then there is a form $h_1 \in \Ah^n(\Delta^1 \times X;\Vh_*)$ such that 
the triple $(g_1,\omega,h_1)$ is homotopic to $(g_0,\omega,h_0)$.   
\end{lemma}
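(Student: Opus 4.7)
The plan is to explicitly construct the homotopy of triples required in Lemma \ref{thm:HFC_concrete_triples}, taking the middle ($\omega$-)component to be the constant homotopy. Write $\Delta^1_t$ for the first and $\Delta^1_u$ for the second $\Delta^1$-factor appearing in $\hb$, so $t$ is the form-interpolation direction (from $\omega$ at $t=0$ to $\phi^n_{\sm}(g)$ at $t=1$) and $u$ is the homotopy direction between $g_0$ and $g_1$. Setting $\omb := p^*\omega$ with $p$ the projection to $X$ (which lies in $F^p\Ah^n(\Delta^1_u \times X;\Vh_*)$ by Definition \ref{def:Hodge_filtration_on_product}, since $\Delta^1$ carries the trivial filtration), we need to construct a closed form $\hb \in \Ah^n(\Delta^1_t \times \Delta^1_u \times X;\Vh_*)$ whose restrictions satisfy
$$
(\iota_0^2)^*\hb = p^*\omega, \quad
(\iota_1^2)^*\hb = \phi^n_{\sm}(\gb), \quad
(\iota_0^1)^*\hb = h_0,
$$
and then set $h_1 := (\iota_1^1)^*\hb$. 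Restricting the three displayed identities to $u=1$ will automatically yield $h_1|_{t=0} = \omega$ and $h_1|_{t=1} = \phi^n_{\sm}(g_1)$, so $(g_1,\omega,h_1)$ is a valid triple.

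The key input is the fiberwise Poincar\'e lemma in the $u$-direction. By Proposition \ref{prop:phi_sm_is_comptatible_with_suspensions}, $\phi^n_{\sm}(\gb)$ is closed on $\Delta^1_u \times X$ and restricts at $u=0$ to $p^*\phi^n_{\sm}(g_0)$. Decomposing $\phi^n_{\sm}(\gb) = \beta_1 + du \wedge \beta_2$ where $\beta_i$ have no $du$-component, the fiber integral $\alpha(u,x) := \int_0^u \beta_2(s,x)\,ds$ is a form on $\Delta^1_u \times X$ with $\alpha|_{u=0}=0$ and, by closedness of $\phi^n_{\sm}(\gb)$ (which gives $\partial_u\beta_1 = d_X\beta_2$ up to sign), $d\alpha = \phi^n_{\sm}(\gb) - p^*\phi^n_{\sm}(g_0)$. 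Fix a smooth cutoff $\chi \colon \Delta^1_t \to [0,1]$ with $\chi(0)=0$ and $\chi(1)=1$, let $r \colon \Delta^1_t \times \Delta^1_u \times X \to \Delta^1_t \times X$ and $q \colon \Delta^1_t \times \Delta^1_u \times X \to \Delta^1_u \times X$ denote the projections, and define
$$
\hb \;:=\; r^*h_0 \;+\; d\bigl(\chi(t)\cdot q^*\alpha\bigr).
$$

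Closedness of $\hb$ is immediate, since $h_0$ is closed and the second term is exact. The three boundary conditions follow by direct restriction computations using $\alpha|_{u=0}=0$, $\chi(0)=0$, $\chi(1)=1$, and the fact that $dt$ pulls back to zero along the inclusions $\{t=0\}, \{t=1\} \hookrightarrow \Delta^1_t \times \Delta^1_u \times X$. Specifically: at $u=0$ the correction term vanishes because $\alpha|_{u=0}=0$; at $t=0$ both pieces of the correction term $\chi'(t)\,dt\wedge q^*\alpha + \chi(t)\,q^*d\alpha$ vanish (the first because $dt$ pulls back to zero, the second because $\chi(0)=0$), leaving $r^*h_0|_{t=0} = p^*\omega$; at $t=1$ the correction term contributes $q^*d\alpha = \phi^n_{\sm}(\gb) - p^*\phi^n_{\sm}(g_0)$, which cancels the second term in $r^*h_0|_{t=1} = p^*\phi^n_{\sm}(g_0)$ to leave precisely $\phi^n_{\sm}(\gb)$.

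I do not anticipate any serious obstacle. The only technical points to verify carefully are the sign convention in the Poincar\'e primitive (a standard computation), that the projection $\Delta^1 \times X \to X$ is a morphism in $\ManF$ so $\omb = p^*\omega$ genuinely lies in $F^p$, and the orientation conventions when restricting along the various $\iota$'s, which are controlled by working throughout with $\Delta^{\bullet}\times X$ rather than $X \times \Delta^{\bullet}$ as in Remark \ref{rem:order_of_Delta_and_X_integration}. Conceptually the lemma is the expected path-lifting property for the ``homotopy fibration'' $\MUhs(p)_n(X) \to \mapsm(\Delta^{\bullet}\times X, QMU_n)$ along the fiber in which $\omega$ is held fixed, and the explicit formula above is its concrete realization.
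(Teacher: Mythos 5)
Your proof is correct, but it takes a genuinely different route from the paper's. The paper does not invoke the Poincar\'e lemma or a cutoff function at all. Instead it sets $h_1 := h_0 + \phi_{\sm}(\gb) - \pi_X^*\phi_{\sm}(g_0)$ directly (identifying the interpolation $\Delta^1$ with the homotopy $\Delta^1$), and then builds the two-parameter form as $\hb := \pi_{\Delta^1 \times X}^*\bigl(h_0 - \pi_X^*\phi_{\sm}(g_0)\bigr) + \phi_{\sm}(G_{\bullet})$, where $G_s(t,x) := g_{st}(x)$ is a reparametrization of the given homotopy interpolating from the constant homotopy at $g_0$ (at $s=0$) to $\gb$ itself (at $s=1$). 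All boundary restrictions are then read off directly from functoriality of $\phi_\sm$. Your version instead manufactures a fiberwise primitive $\alpha$ with $d\alpha = \phi^n_{\sm}(\gb) - p^*\phi^n_{\sm}(g_0)$ and $\alpha|_{u=0}=0$, and glues it in via a cutoff $\chi(t)$: $\hb := r^*h_0 + d\bigl(\chi(t)\,q^*\alpha\bigr)$. Both constructions exploit the same fact---that $\phi_{\sm}(\gb)$ and $p^*\phi_{\sm}(g_0)$ agree at $u=0$ and hence differ by something exact vanishing there---but the paper realizes the needed exact correction \emph{intrinsically} via $\phi_\sm(G_\bullet)$, avoiding the auxiliary choices of $\alpha$ and $\chi$, while your approach is more hands-on and makes the path-lifting mechanism fully explicit. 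Your sign analysis ($\partial_u\beta_1 = d_X\beta_2$, hence $d\alpha = \phi^n_\sm(\gb)-p^*\phi^n_\sm(g_0)$) is in fact correct, not merely ``up to sign,'' so there is no gap; both proofs go through.
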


\begin{proof} 
The pullback along the projection $\pi_X \colon \Delta^1 \times X \to X$ of $\phi_{\sm}(g_0)$ yields a closed form  $\pi_X^*\phi^n_{\sm}(g_0) \in \Ah^{n}(\Delta^1 \times X;\Vh_*)_{\cl}$ which is constant on $\Delta^1$. 
We set $h_1 := h_0 + \phi_{\sm}(\gb) - \pi_X^*\phi_{\sm}(g_0)$. 
The restrictions along $\iota_t \colon \Delta^0 \times X \into \Delta^1 \times X$ for $t=0$ and $t=1$, respectively, yield  
\begin{align*}
\iota_1^*h_1 & = \iota_1^*h_0 + \iota_1^*\phi_{\sm}(\gb) - \iota_1^*\pi_X^*\phi_{\sm}(g_0) \\
& = \phi_{\sm}(g_0) + \phi_{\sm}(g_1) - \phi_{\sm}(g_0) \\
& = \phi_{\sm}(g_1) 
\end{align*}
and
\begin{align*}
\iota_0^*h_1 & = \iota_0^*h_0 + \iota_0^*\phi_{\sm}(\gb) - \iota_0^*\pi_X^*\phi_{\sm}(g_0) \\
& = \omega + \phi_{\sm}(g_0) - \phi_{\sm}(g_0) \\
& = \omega.
\end{align*}
Hence the triple $(g_1,\omega, h_1)$ represents an element in $\MUhs^n(p)(X)$.   
Now we construct a homotopy between $(g_0,\omega,h_0)$ and $(g_1,\omega,h_1)$. 
The homotopies $\gb$ and $\omega_{\bullet}:=\pi_X^*\omega$ satisfy the requirements of Lemma \ref{thm:HFC_concrete_triples}. 
It remains to find a compatible homotopy $\hb$. 
To construct $\hb$ we consider the map 
\[
G_{\bullet} \colon \Delta^1 \times \Delta^1 \times X \to QMU_n
\]
defined by $G_s(t,x) = g_{st}(x)$. 
We think of $G_{\bullet}$ as a homotopy between the maps $G_0 \colon (t,x)\mapsto g_0(x)$ and $G_1 \colon (t,x) \mapsto g_t(x)$.  
We set 
\[
\hb := \pi_{\Delta^1 \times X}^*(h_0 -\pi_X^*\phi_{\sm}(g_0)) + \phi_{\sm}(G_{\bullet}) \in \Ah^n(\Delta^1 \times \Delta^1 \times X;\Vh_*)_{\cl}  
\]
where $\pi_{\Delta^1 \times X} \colon \DD^1 \times \DD^1 \times X \to \DD^1 \times X$ denotes the projection to the two right hand factors. 
Next we compute the pullbacks along the various inclusions into the copies of $\Delta^1$. 
Recall the map $\iota_s^2$ from \eqref{eq:def_of_iota_s^2}. 
The restriction to $1$ on the left most factor in $\Delta^1 \times \Delta^1 \times X$ yields 
\begin{align*}
(\iota^2_1)^*\hb & = 
(\iota^2_1)^*\pi_{\Delta^1 \times X}^*(h_0 -\pi_X^*\phi_{\sm}(g_0)) + (\iota^2_1)^*\phi_{\sm}(G_{\bullet}) \\
& = h_0 - \pi_X^*\phi_{\sm}(g_0) + \phi_{\sm}(\gb)  \\
& = h_1.
\end{align*}
The restriction to $0$ yields 
\begin{align*}
(\iota^2_0)^*\hb & = 
(\iota^2_0)^*\pi_{\Delta^1 \times X}^*(h_0-\pi_X^*\phi_{\sm}(g_0)) + (\iota^2_0)^*\phi_{\sm}(G_{\bullet}) \\
& = h_0  - \pi_X^*\phi_{\sm}(g_0) + \phi_{\sm}(g_{0\cdot \bullet}) \\
& = h_0 - \pi_X^*\phi_{\sm}(g_0) + \pi_X^*\phi_{\sm}(g_0) \\
& = h_0.
\end{align*}
%
%
Now recall the map $\iota_t^1$ from \eqref{eq:def_of_iota_t^1}.
The restriction to $1$ on the middle factor in $\Delta^1 \times \Delta^1 \times X$ yields 
\begin{align*}
(\iota^1_1)^*\hb & = 
(\iota^1_1)^*\pi_{\Delta^1 \times X}^*(h_0-\pi_X^*\phi_{\sm}(g_0)) + (\iota^1_1)^*\phi_{\sm}(G_{\bullet}) \\
& =  \iota_1^*(h_0-\pi_X^*\phi_{\sm}(g_0)) + \phi_{\sm}(\gb) \\
& = \phi_{\sm}(g_0) - \phi_{\sm}(g_0) + \phi_{\sm}(\gb)  \\
& = \phi_{\sm}(\gb). 
\end{align*}
The restriction to $0$ yields 
\begin{align*}
(\iota^1_0)^*\hb & = 
(\iota^1_0)^*\pi_{\Delta^1 \times X}^*(h_0-\pi_X^*\phi_{\sm}(g_0)) + (\iota^1_0)^*\phi_{\sm}(G_{\bullet}) \\
& = \iota_0^*(h_0 -\pi_X^*\phi_{\sm}(g_0)) + \phi_{\sm}(g_{0\cdot \bullet})  \\
& =  \omega - \phi_{\sm}(g_0) + \phi_{\sm}(g_0) \\
& = \omega.  
\end{align*} 
Thus the triple $(\gb,\pi_X^*\omega,\hb)$ is a homotopy between $(g_0,\omega,h_0)$ and $(g_1,\omega, h_1)$.  
\end{proof}

\begin{proof}[Proof of Theorem \ref{thm:HFC_concrete_triples_transversal}]
Let $\gamma \in \MUD^n(p)(X)$ and $(g, \omega, h)$ be a representative as in Lemma  \ref{thm:HFC_concrete_triples}. 
By Freudenthal's Suspension Theorem there is a map $g'\colon \Sigma^kX_+\to MU(m,l)$ such that $A(g')$ is homotopic to $g$. Since smooth maps are dense among continuous maps, we can by Thom's transversality theorem find a map $g_\pitchfork \colon \Sigma^kX_+\to MU(m,l)$ which is smooth on $g_\pitchfork^{-1}(\gamma_{m,l})$, homotopic to $g'$ and transverse to $\iota_{m,l}$. 
Then $A(\gp)\in\Mapf(X,QMU_n)$ is homotopic to $g$.
By Lemma \ref{lemma:homotopy_induces_homotopy_on_forms}, there exists an $h_1$ such that $(A(\gp),\omega,h_1)$ represents an element in $\MUhs^n(p)(X)$ and a homotopy between $(g,\omega,h)$ and $(A(\gp),\omega,h_1)$.  
Hence the latter too represents the class $\gamma$ in $\MUhs^n(p)(X)$. 
\end{proof}


\begin{defn}\label{def:MUD_transversal_subset}
We denote by $\MUhs^{\pitchfork}(p)_n(X)_0$ the subset of triples $(g,\omega,h)$ in $\MUhs(p)_n(X)_0$ such that 
$g\in \Mapf(X,QMU_n)$.
\end{defn}


\subsection{A geometric Pontryagin--Thom map}

We will now define a geometric Pontryagin--Thom map 
$$
\rho_\nabla\colon \Mapf(X,QMU_n)\to \geocycles^n(X).
$$
Given $\gp\colon \Sigma^kX_+\to MU(m,l)$ which is smooth on 
$$
U_\gp=(\gp)^{-1}(\gamma_{m,l}) \subset X\times \R^k  \subset \Sigma^kX_+,
$$ 
and transverse to $\iota_{m,l}$, we consider  
the following commutative diagram: 
\begin{equation}
\label{diagramPontryaginThomExplained}
\xymatrix{
Z\ar@/_2.0pc/[ddd]_{f_{\gp}}\ar[r]^-{\gp|_Z}\ar[dd]_-{i} & \Gr_m\left(\C^{m+l}\right) \ar[d]_-{\iota_{m,l}} 
& \\
 & \gamma_{m,l} \ar[d]_-{
 } &  \\
U_{\gp} \ar[ur]^-{\gp|_{U_\gp}} \ar[d]_-{\pi} \ar[r] & MU(m,l) \ar[r]^-{
} & QMU_{2m} \\
X & & 
}
\end{equation}
The square is cartesian, $Z = (\gp)^{-1}(\Gr_m(\C^{m+l}))$, and $\pi$ is the restriction to $U_\gp$ of the projection $X\times \R^k\to X$. 
The derivative of $\gp$ induces a real isomorphism
\[
D\gp\colon N(i)\to \left(\gp|_Z\right)^*N(\iota_{m,l})=:N_\gp
\]
where $N(i)$ is the normal bundle of $i$.
This gives $f_\gp=\pi\circ i$ a complex orientation. It is easy to check that $f_\gp$ is proper. 
We get a connection on $N_\gp$ by   
$$
\nabla_{\gp}:=\left(\gp|_Z\right)^*\nabla_{m,l}.
$$
We then define 
$$
\rho_{\nabla,m,l}(\gp)=(f_\gp, N_\gp, \nabla_\gp)\in \geocycles^{n}(X),
$$
and for $g=A(\gp)$, we define
$$
\rho_\nabla(g) = \rho_{\nabla,m,l}(\gp) \in \geocycles^n(X).
$$

\begin{prop}\label{prop:rho_nabla_is_well_defined}
The map $\rhog\colon \Mapf(X,QMU_n)\to \geocycles^n(X)$ is well-defined.
\end{prop}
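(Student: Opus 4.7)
The plan is to show invariance of $\rho_{\nabla,m,l}(\gp)$ under the two stabilization maps $q_{m,l}$ and $s_{m,l}$, since by Proposition \ref{FunctionsFromMapA} any two representatives $\gp, \gp'$ of the same $g = A(\gp) = A(\gp')$ are eventually connected by these stabilizations. Hence it suffices to verify
\[
\rho_{\nabla,m,l}(\gp) = \rho_{\nabla,m,l+1}(q_{m,l}\circ \gp) \quad \text{and} \quad \rho_{\nabla,m,l}(\gp) = \rho_{\nabla,m+1,l}(s_{m,l}\circ \Sigma^2\gp)
\]
in $\geocycles^n(X)$. Before doing so, I would record that $f_{\gp}$ is proper: since $\Grml$ is compact in $MU(m,l)$ and $\gp(\infty)=\infty$, a relatively compact open neighbourhood $V$ of $\Grml$ in $MU(m,l)\setminus\{\infty\}$ pulls back to a closed subset $\gp^{-1}(\bar V)\subset X\times \R^k$ whose intersection with $K\times\R^k$ for any compact $K\subset X$ is compact by the colimit description of $\Sigma^kX_+$; hence $Z\cap(K\times\R^k)$ is compact.

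For the $q$-step, I use that the right-hand square in \eqref{definkjnk} is cartesian with $\overline{i_{m,l}}$ a complex linear embedding covering the embedding $i_{m,l}$. Consequently
\[
(q_{m,l}\circ \gp)^{-1}(\Gr_m(\C^{m+l+1}))=\gp^{-1}(\Grml)=Z,
\]
the pulled-back complex normal bundle $(\gp|_Z)^*i_{m,l}^*\gamma_{m,l+1}$ is canonically identified with $(\gp|_Z)^*\gamma_{m,l}=N_{\gp}$ via $\overline{i_{m,l}}$, and this identification carries the derivative data defining the complex orientation to that of $\gp$. By Proposition \ref{canonicalConnectionsCompatible} we have $\overline{i_{m,l}}^*\nabla_{m,l+1}=\nabla_{m,l}$, so the pulled-back connection is $\nabla_{\gp}$. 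Hence the two geometric cycles are literally identical.

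For the $s$-step, I use that $\overline{j_{m,l}}\colon \gamma_{m,l}\oplus\trivbc\to \gamma_{m+1,l}$ is a fibrewise linear isomorphism covering $j_{m,l}$. Since the preimage of the zero section under a fibrewise isomorphism of the same rank is the zero section, the restriction of $s_{m,l}\circ\Sigma^2\gp$ to $X\times\R^k\times\C$ satisfies
\[
(s_{m,l}\circ \Sigma^2\gp)^{-1}(\Gr_{m+1}(\C^{m+1+l})) = Z\times\{0\},
\]
so the underlying map $X\times\R^{k+2}\supset Z\times\{0\}\to X$ is again $f_{\gp}$. The pulled-back complex normal bundle becomes $(\gp|_Z\times 0)^*\overline{j_{m,l}}^*\gamma_{m+1,l} = N_{\gp}\oplus \trivbc$, and by Proposition \ref{canonicalConnectionsCompatible} the connection becomes $\nabla_{\gp}\oplus d$. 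Thus the new geometric cycle is the stabilization of the old one by the trivial line bundle with its trivial connection, which represents the same class in $\geocycles^n(X)$ because the complex orientation of $f_{\gp}$ is by definition insensitive to such stabilization. The main obstacle is bookkeeping in this $s$-step: one must check that the identifications of the normal bundle coming from $\overline{j_{m,l}}$ intertwine correctly with the derivative of $\gp$ used to transfer the complex structure, so that the resulting cycles are genuinely isomorphic and not merely stably equivalent up to a non-canonical choice.
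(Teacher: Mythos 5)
Your proof follows the same strategy as the paper: reduce, via Proposition \ref{FunctionsFromMapA}, to showing invariance under the two stabilization maps $q_{m,l}$ and $s_{m,l}$, and then verify each identity by tracking the preimage of the zero section, the pulled-back complex normal bundle, and the pulled-back connection (using Proposition \ref{canonicalConnectionsCompatible}). The paper carries out these two verifications via explicit cartesian diagrams; your verbal account arrives at the same identifications, and your side remark checking properness of $f_{\gp}$ is also handled (more briefly) in the text preceding the proof.

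The concern you flag at the end of the $s$-step is worth addressing, but it is in fact resolved by the very identifications you set up, and the paper's proof has exactly the same content. The cartesian diagram \eqref{definkjnk} gives a canonical bundle isomorphism $j_{m,l}^*\gamma_{m+1,l}\cong\gamma_{m,l}\oplus\trivbc$ induced by the linear inclusion $\C^{m+l}\hookrightarrow\C^{m+l+1}$, and the stabilization of the embedding $Z\hookrightarrow X\times\R^{k}$ to $Z\times\{0\}\hookrightarrow X\times\R^{k}\times\C$ is the same linear inclusion of the ambient Euclidean factor. Under $D\gp$ and $D(s_{m,l}\circ\Sigma^2\gp)$ these two canonical splittings match coordinatewise, which is precisely what the paper's diagram exhibits before concluding "it is now clear that the map $\{0\}\times Z_{\gp}\to Z_{s_{m,l}\circ\Sigma^2\gp}$ is an isomorphism of geometric complex oriented maps over $X$." So the stabilization is a canonical identification coming from the extra $\C$ coordinate, not a non-canonical stable equivalence; with that observed, your proof is complete and essentially identical to the paper's.
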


\begin{proof}
Let $\gp\in \Mappf(\Sigma^kX_+,MU(m,l))$. By Theorem \ref{FunctionsFromMapA} it suffices to show
\begin{align*}
\rho_{\nabla,k,m,l}(\gp)&= \rho_{\nabla,k,m,l+1}(q_{m,l}\circ \gp) \\ 
\rho_{\nabla,k,m,l}(\gp)&= \rho_{\nabla,k+2,m+1,l}(s_{m,l}\circ \Sigma^2\gp).
\end{align*}
We use the notation of diagram \eqref{diagramPontryaginThomExplained}. The first identity follows from the following commutative diagram, where all squares are cartesian: 
\[
\xymatrix{
  & Z_{q_{m,l}\circ \gp} \ar[d]\ar[dr] & \\
Z_{\gp}\ar[d]\ar[dr]\ar[ur] & \Gr_{m}(\C^{m+l+1})\ar[dr]|\hole_(.3){\iota_{m,l+1}} & U_\gp \ar[d] 
\ar@/^3pc/[dd] \\
\Gr_m(\C^{m+l})\ar[dr]_{\iota_{m,l}}\ar[ur]|\hole & U_\gp\ar[d]_-(.3){\gp}\ar[ur]_(.3){\id} \ar[dr] & \mathbb \gamma_{m,l+1} \\
 &  \gamma_{m,l} \ar[ur]|\hole_(.27){\overline{i_{m,l}}} & X.
}
\]
For the second identity, we first note that $s_{m,l}\circ\Sigma^2\gp$ is characterized by
$$
\left(s_{m,l}\circ \Sigma^2\gp\right)|_{X\times \R^k\times \C} = \overline{j_{m,l}}\circ(\left(\gp|_{X\times \R^k}\right)\times\id_\C
).
$$
It is clear that $(\gp|_{X\times\R^k})\times\id_\C $ is transverse to $\{0\}\times \Gr_m(\C^{m+l})$, with inverse image $\{0\}\times Z_{\gp}$. We then consider the following commutative diagram, where all squares are cartesian squares of manifolds:  
$$
\xymatrix{
  & Z_{s_{m,l}\circ \Sigma^2\gp} \ar[d]\ar[dr] & \\
\{0\}\times Z_{\gp}\ar[d]\ar[dr]\ar[ur] & \Gr_{m+1}(\C^{m+1+l})\ar[dr]|\hole & \C\times U_\gp \ar[d]_{} \ar@/^3pc/[dd] \\
\Gr_m(\C^{m+l})\ar[dr]_{0\oplus \iota_{m,l}}\ar[ur]|\hole & \C\times U_\gp\ar[d]_-(.35){\gp\times \id_\C}\ar[ur]_(.3){\id} \ar[dr] & \gamma_{m+1,l} \\
 & \trivbc\oplus \gamma_{m,l} \ar[ur]|\hole_(.3){\overline{j_{m,l}}} & X.
}
$$
It is now clear that the map $\{0\}\times Z_{\gp}\to Z_{s_{m,l}\circ \Sigma^2\gp}$ is an isomorphism of geometric complex oriented maps over $X$. This proves the assertion.  
\end{proof}

In order to better understand the current $\phi(\rhog(g))$, we define fundamental currents by 
\begin{align} \label{geometric_fundamental_current}
\phi_{\nabla_{m,l}}=(\iota_{m,l})_*K(\nabla_{m,l}) \in \Ds^{2m}(MU(m,l);\Vh_*). 
\end{align}
The cartesian square of diagram \eqref{diagramPontryaginThomExplained} yields by Theorem \ref{currentialPushPull} the formula of currents
$$
\left(\gp|_{U_\gp}\right)^*\phi_{\nabla_{m,l}}=i_*K(\nabla_\gp).
$$
Pushing this down to $X$, we obtain an expression for $\phi(\rhog(g))$ in terms of $\phi_{\nabla_{m,l}}$:
\begin{equation} \label{fundamentalEquationforFundamentalCurrents}
    \phi(\rhog(A(\gp)))=(f_\gp)_*K(\nabla_\gp) = \pi_*\left(\gp|_{U_\gp}\right)^*\phi_{\nabla_{m,l}}.
\end{equation}

\begin{remark} \label{compatabilityOfGeometricFundamentalCurrents}
We now have a map
\begin{align*}
\phi\circ\rhog=:\phi_\nabla &\colon \Mappf(X,QMU_n)\to \Ds^n(X;\Vh_*)
\end{align*}
of presheaves on the site of manifolds and local diffeomorphisms. Applying this with $X=\gamma_{m,l}$ and $g$ the canonical inclusion $\gamma_{m,l}\to MU(m,l)=\gamma_{m,l}\sqcup\{\infty\}$, we deduce that $\overline{i_{m,l}}^*\phi_{\nabla_{m,l+1}}= \phi_{\nabla_{m,l}}$ and $\left(\pi_{\gamma_{m,l}}^{\gamma_{m,l}\times\C}\right)_*\overline{j_{m,l}}^*\phi_{\nabla_{m+1,l}}= \phi_{\nabla_{m,l}}$.
\end{remark}

We will now compare the fundamental forms $\phi_{m,l}$ of \eqref{definitionPhiml} with the fundamental currents $\phi_{\nabla_{m,l}}$ of \eqref{geometric_fundamental_current}.
The Thom isomorphism is induced by integration over the fiber, which we have defined to be the restriction and co-restriction of the currential pushforward. 
Since $\int_{MU(m,l)/Gr_m(\C^{m+l})}U_{m,l}=1$, we have  
$$
\int_{MU(m,l)/\Gr_m(\C^{m,l})} \pi_{m,l}^*K(\nabla_{m,l})\wedge U_{m,l} = K(\nabla_{m,l})
$$
by the projection formula for the currential pushforward $f_*(T\wedge f^*\omega)=f_*(T)\wedge \omega$. 
We also have
$$
(\pi_{m,l})_*\left((\iota_{m,l})_* K(\nabla_{m,l})\right)= K(\nabla_{m,l})
$$
since $\pi_{m,l}\circ \iota_{m,l} = \id_{\Gr_m(\C^{m+l})}$. 
This establishes the identity  
\begin{align}\label{eq:comparison_of_fund_currents_in_cohomology}
[{\phi_{m,l}}]=[\phi_{\nabla_{m,l}}] ~\text{in} ~ H^{2m}(MU(m,l);\Vh_*).
\end{align}
It follows that there are currents
$$
\alpha_{m,l} \in \Ds^{2m-1}(MU(m,l);\Vh_*)
$$ 
such that 
\begin{align}\label{eq:alpha_defining_equation_as_current_on_MU(m,l)}
d\alpha_{m,l} =\phi_{\nabla_{m,l}}- \phi_{m,l} ~ \text{in} ~ 
\Ds^{2m}(MU(m,l);\Vh_*).
\end{align}
Since $WF(d\alpha_{m,l})=WF(\phi_{\nabla_{m,l}})\subset N(\iota_{m,l})$, we can by Lemma \ref{4.11} furthermore assume 
$$
WF(\alpha)\subset N(\iota_{m,l}).
$$
This implies, in particular, that $g^*\alpha_{m,l}$ is defined whenever $g\pitchfork \iota_{m,l}$. 
\begin{remark}\label{remark:alphamlWellDefined}
Suppose $\alpha'\in \Ds^{2m-1}(MU(m,l);\Vh_*)$ also satisfies $d\alpha'  =\phi_{\nabla_{m,l}}- \phi_{m,l}$.
We get 
$$
d(\alpha_{m,l}-\alpha')=0.
$$ 
Since $H^{2m-1}(MU(m,l);\Vh_*)=0$ we can find a current $\beta$ in  $\Ds^{2m-2}(MU(m,l);\Vh_*)$ such that $d\beta = \alpha_{m,l}-\alpha'$. 
We will eventually only need $\alpha_{m,l}$ modulo exact currents, and as such we see that $\alpha_{m,l}$ is well-defined.
\end{remark}

From Proposition \ref{MathaiQuillenThomFormsCompatible} and Remarks \ref{compatabilityOfGeometricFundamentalCurrents} and \ref{remark:alphamlWellDefined} it follows that modulo exact currents, we have
$$
\overline{i_{m,l}}^*\alpha_{m,l+1}=\alpha_{m,l}\quad \text{ and }\quad  \left(\pi_{m,l}\right)_*(\overline{j_{m,l}}^*\alpha_{m+1,l})=\alpha_{m,l},
$$
where $\pi_{m,l}\colon \trivbc\oplus \gamma_{m,l}\to \gamma_{m,l}$ is the projection. 
Hence by Proposition \ref{FunctionsFromMapA} we may define a map 
\begin{align*}
\pbalpha\colon \Mapf(X,QMU_n)  \to \Ds^*(X;\Vh_*)/\Imm(d)   
\end{align*}
by the setting 
\begin{align*}
\pbalpha(g) :=(-1)^k\pi_*\gp^*\alpha_{m,l}
\end{align*}
for any $\gp\in \Mapp^\pitchfork(\Sigma^kX_+,MU(m,l))$ with $A(\gp)=g$, where $\pi\colon X\times\R^k\to X$ is the projection as in diagram \eqref{diagramPontryaginThomExplained}.
The sign is here to counteract the sign appearing as $d$ passes through $\pi_*$ in the computation
\begin{align}
    \nonumber d\pbalpha(g) &= (-1)^kd\pi_*\gp^*(\alpha_{m,l}) \\
    \label{dpbalphaeq} &= \pi_*d\gp^*(\alpha_{m,l}) \\
    \nonumber &= \pi_* \gp^*(\phi_{\nabla_{m,l}}-\phi_{m,l})\\
    \nonumber &= \phi(\rhog(g))-\phi_\sm^n(g),
\end{align}
where the last equality uses  \eqref{fundamentalEquationforFundamentalCurrents}.


\subsection{Construction of the map \texorpdfstring{$\kappa$}{kappa}}
\label{subsec:construction_of_kappa}


For the following definition recall the notation $\MUhs^{\pitchfork}(p)_n(X)_0$ from Definition \ref{def:MUD_transversal_subset} and 
the group $ZMU^n(p)(X)$ of Hodge filtered cobordism cycles from Definition \ref{def:ZMUn(p)(X)}.

\begin{defn}\label{def:kappa_on_set_of_transversal_maps}
We define the map 
\begin{align*}
\kappa \colon \MUhs^{\pitchfork}(p)_n(X)_0 \to ZMU^n(p)(X)
\end{align*}
by 
\[
(g,\omega,h)\mapsto \left(\rhog(g),\ \omega ,\ \pbalpha(g) + \int_{\Delta^1 \times X/X} h\right).
\]
\end{defn}

\begin{lemma}\label{lemma:we_do_get_a_HFC_cycle}
For $(g,\omega,h)\in \MUhs^{\pitchfork}(p)_n(X)_0$ 
we have 
\[
\kappa(g,\omega,h)\in ZMU^n(p)(X).
\]
\end{lemma}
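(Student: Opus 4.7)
The plan is to verify directly that the triple $\kappa(g,\omega,h)=(\rhog(g),\omega,\pbalpha(g)+\int_{\Delta^1\times X/X}h)$ satisfies the defining condition of Definition \ref{def:ZMUn(p)(X)}; namely that
\begin{equation*}
\phi\bigl(\rhog(g)\bigr) - d\!\left(\pbalpha(g) + \int_{\Delta^1\times X/X} h\right) \; \in \; F^p\Ah^n(X;\Vh_*).
\end{equation*}
In fact I will show this difference equals $\omega$ on the nose, which is in $F^p\Ah^n(X;\Vh_*)$ by the definition of $\MUhs(p)_n(X)_0$.

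The first term $d\pbalpha(g)$ was already computed in equation \eqref{dpbalphaeq}, giving
\begin{equation*}
d\pbalpha(g) = \phi\bigl(\rhog(g)\bigr) - \phi_{\sm}^n(g).
\end{equation*}
For the second term I will apply the Stokes-type formula for integration over the fiber recalled in Remark \ref{rem:order_of_Delta_and_X_integration}, which reads
\begin{equation*}
d\int_{\Delta^1\times X/X} h \;=\; -\int_{\Delta^1\times X/X} dh \;+\; \iota_1^*h - \iota_0^*h.
\end{equation*}
Since $h\in \Ah^n(\Delta^1\times X;\Vh_*)_{\cl}$ is closed by Lemma \ref{thm:HFC_concrete_triples}, the integrand on the right vanishes, and the boundary conditions $\iota_1^*h=\phi_{\sm}^n(g)$, $\iota_0^*h=\omega$ from the same lemma give
\begin{equation*}
d\int_{\Delta^1\times X/X} h \;=\; \phi_{\sm}^n(g) - \omega.
\end{equation*}

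Combining these two calculations gives a telescoping cancellation:
\begin{equation*}
\phi\bigl(\rhog(g)\bigr) - d\pbalpha(g) - d\int_{\Delta^1\times X/X} h
= \phi\bigl(\rhog(g)\bigr) - \bigl(\phi(\rhog(g)) - \phi_{\sm}^n(g)\bigr) - \bigl(\phi_{\sm}^n(g) - \omega\bigr) = \omega,
\end{equation*}
which lies in $F^p\Ah^n(X;\Vh_*)$. Finally, $\rhog(g)$ is a well-defined geometric cycle by Proposition \ref{prop:rho_nabla_is_well_defined}, and the current $\pbalpha(g)+\int_{\Delta^1\times X/X}h$ lies in $\Ds^{n-1}(X;\Vh_*)/d\Ds^{n-2}(X;\Vh_*)$, so the triple indeed defines an element of $ZMU^n(p)(X)$.

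Since everything reduces to plugging in \eqref{dpbalphaeq}, the fiber-integration Stokes formula, and the boundary conditions packaged in Lemma \ref{thm:HFC_concrete_triples}, there is no serious obstacle in the proof; the only point requiring a moment of care is the sign convention in the fiber-integration formula, which is precisely why the paper has chosen to work with $\Delta^{\bullet}\times X$ rather than $X\times\Delta^{\bullet}$ (cf.\ Remark \ref{rem:order_of_Delta_and_X_integration}).
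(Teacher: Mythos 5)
Your proof is correct and follows essentially the same route as the paper: you compute $d\pbalpha(g)$ via equation \eqref{dpbalphaeq}, compute $d\int_{\Delta^1\times X/X}h$ via the boundary conditions in Lemma \ref{thm:HFC_concrete_triples}, and observe the telescoping cancellation leaving $\omega\in F^p\Ah^n(X;\Vh_*)$. The only cosmetic difference is that you spell out the fiber-integration Stokes formula and use closedness of $h$ explicitly, whereas the paper states $d\int_{\Delta^1\times X/X}h=\iota_1^*h-\iota_0^*h$ directly.
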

\begin{proof}
We work within the context of diagram \eqref{diagramPontryaginThomExplained}, within which we defined both $\rhog$ and $\pbalpha$. 
Since $(g,\omega,h)$ has the form described in Lemma \ref{thm:HFC_concrete_triples} we have 
$$
d\int_{\Delta^1 \times X/X}h = \iota_1^*h-\iota_0^*h = \phi_\sm^n(g)-\omega.
$$ Using \eqref{dpbalphaeq} we compute
\begin{align*}
    R(\kappa(g,\omega,h)) &= \phi\left(\rhog(g)\right)-d\pbalpha(g)-d\int_{\Delta^1 \times X/X}h \\
    &= \phi(\rhog(g))-\big(\phi(\rhog(g))-\phi_\sm(g)\big)-(\phi_\sm^n(g)-\omega)\\
    &= \omega.
\end{align*}
Since $\omega \in F^p\Ah^n(X;\Vh_*)$, this shows that $\kappa$ indeed is a map to $ZMU^n(p)(X)$. 
\end{proof}

Our next goal is to prove that $\kappa$ takes homotopic triples to cobordant Hodge filtered cycles. We first establish two lemmas:
\begin{lemma}
\label{lemma:htpy_formula}
Let $X\in\ManF$, and let 
$\iota_t\colon X\to \R\times X$ be given by $\iota_t(x)=(t,x)$. 
Then for $\hfcycle\in MU^n(p)(\R\times X)$ we have
$$
\iota_1^*\hfcycle-\iota_0^*\hfcycle=a\left[\int_{[0,1]\times X/X} R(\hfcycle)\right]
$$
in $MU^n(p)(X)$.
\end{lemma}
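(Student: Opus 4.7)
The plan is to realize the difference $\iota_1^*\hfcycle - \iota_0^*\hfcycle$ at the cycle level and exhibit it, modulo $BMU^n(p)(X)$, as $a\bigl[\int_{[0,1]\times X/X}R(\hfcycle)\bigr]$. First, combining Proposition \ref{pbwd} and Lemma \ref{4.11}, I would pass to a representative of $\hfcycle$ in the same Hodge filtered cobordism class for which the underlying map $\widetilde f \colon Z \to \R \times X$ is transverse to both $\iota_0$ and $\iota_1$ and for which the wavefront set of $h$ is disjoint from $N(\iota_0) \cup N(\iota_1)$. This ensures that $0$ and $1$ are regular values of $c := \pi_\R \circ \widetilde f$, so that restricting $\hfcycle$ to $W_{[0,1]} := c^{-1}([0,1])$ equips $\widetilde f|_{W_{[0,1]}}$ with the structure of a geometric bordism datum $\widetilde b$ over $X$ satisfying $\partial \widetilde b = \widetilde f_1 - \widetilde f_0$, where $\widetilde f_t := \iota_t^*\widetilde f$.

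At the level of Hodge filtered cycles I would then compute
\[
\iota_1^*\hfcycle - \iota_0^*\hfcycle = (\widetilde f_1 - \widetilde f_0,\, \iota_1^*h - \iota_0^*h) = (\partial \widetilde b,\, \psi(\widetilde b)) + (0,\, \iota_1^*h - \iota_0^*h - \psi(\widetilde b)),
\]
whose first summand lies in $BMU^n_{\geo}(p)(X)$ by Definition \ref{def:GeometricBordismDatum}. The proof thus reduces to showing that the current
\[
\iota_1^*h - \iota_0^*h - \psi(\widetilde b) - \int_{[0,1]\times X/X}R(\hfcycle)
\]
lies in $\widetilde F^p \Ah^{n-1}(X;\Vh_*)$ modulo $d\Ds^{n-2}(X;\Vh_*)$, so that $(0,\, \iota_1^*h - \iota_0^*h - \psi(\widetilde b) - \int R(\hfcycle))$ belongs to $a(\widetilde F^p \Ah^{n-1}(X;\Vh_*)) \subset BMU^n(p)(X)$. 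Splitting $R(\hfcycle) = \phi(\widetilde f) - dh$, unwinding the definitions shows that $\int_{[0,1]\times X/X}\phi(\widetilde f) = \pm\psi(\widetilde b)$ with the sign coming from Definition \ref{def:GeometricBordismDatum}, while the fiber-integration identity for $dh$, obtained via Stokes combined with the convention for pushforward of currents, yields $\int_{[0,1]\times X/X}dh + d\int_{[0,1]\times X/X}h = \pm(\iota_1^*h - \iota_0^*h)$. Combining these identities, together with the crucial observation that $R(\hfcycle) \in F^p\Ah^n(\R\times X;\Vh_*)$ and that the product filtration on $\R \times X$ is trivial in the $\R$-direction, so that fiber integration along $[0,1]$ maps $R(\hfcycle)$ into $F^p\Ah^{n-1}(X;\Vh_*)$, expresses the displayed current as an element of $F^p\Ah^{n-1}(X;\Vh_*)$ modulo an exact current in $d\Ds^{n-2}$.

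The main obstacle I expect is the careful sign bookkeeping arising from the convention $dT(\sigma) = (-1)^{|T|+1}T(d\sigma)$ for currents, from the formula $d \circ f_* = (-1)^{\codim f}f_* \circ d$, and from the sign encountered when pulling back integration currents of submanifolds along codimension-one embeddings like $\iota_t$. The $(-1)^n$ factor in the definition of $\psi(\widetilde b)$ and the $(-1)^n$ discrepancy between $\int_{[0,1]\times X/X}\phi(\widetilde f)$ and $\psi(\widetilde b)$ should conspire to cancel against the parity-dependent sign in the fiber integration identity for the singular current $dh$, which is where the care is needed; once the signs are tracked, the argument is essentially a direct manipulation of the cycle data together with Proposition \ref{geobordismequation} and Theorem \ref{currentialPushPull}.
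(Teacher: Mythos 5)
Your proof takes a genuinely different, considerably more hands-on route than the paper's, and while it is morally sound it is technically much more demanding. The paper's argument is a short algebraic one: by exactness of the long exact sequence of Theorem \ref{longexactseq} at $MU^n(p)(\R\times X)$ (and homotopy invariance of $MU^n$, so that $I(\hfcycle) = \pi^*\iota_0^*I(\hfcycle)$), one writes $\hfcycle = \pi^*(\iota_0^*\hfcycle) + a[\beta]$ for a \emph{smooth form} $\beta \in \Ah^{n-1}(\R\times X;\Vh_*)$ with $d\beta \in F^p$. Then $\iota_1^*\hfcycle - \iota_0^*\hfcycle = a[\iota_1^*\beta - \iota_0^*\beta]$ because $\pi\circ\iota_t = \id_X$, and $\int_{[0,1]\times X/X}R(\hfcycle) = \int_{[0,1]\times X/X}d\beta \equiv \iota_1^*\beta - \iota_0^*\beta \pmod{\Imm d}$ by the fiber-integration Stokes formula for \emph{forms} (the $\pi^*R(\iota_0^*\hfcycle)$ summand has no $dt$-component and integrates to zero). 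Every step is form-level; no wavefront sets, no current pushforwards over a manifold with boundary, and no delicate signs.

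By contrast, you work directly with the cycle $(\widetilde f, h)$, where $h$ is a genuine current, and need to: restrict $h$ to the closed manifold-with-boundary $[0,1]\times X$ and push forward to $X$; prove a Stokes formula for pushforward of \emph{currents} over $[0,1]\times X$ with boundary terms $\iota_t^*h$ defined via wavefront restriction; and split the \emph{form} $R(\hfcycle)$ into the two \emph{currents} $\phi(\widetilde f)$ and $dh$ before fiber-integrating each. None of these tools is established in the paper, so each would have to be proved. Moreover, the sign bookkeeping you flag at the end is not a routine afterthought: the $(-1)^n$ in the definition of $\psi$, the rule $d\circ f_* = (-1)^{\codim f}f_*\circ d$, the current convention $dT(\sigma)=(-1)^{\deg T+1}T(d\sigma)$, and the Leibniz rule for $d(\chi_{[0,1]}\cdot h)$ all interact, and tracking them shows the remainder is not obviously an element of $\widetilde F^p\Ah^{n-1} + d\Ds^{n-2}$ — there is a real risk of a stray $\bigl((-1)^n-1\bigr)\psi(\widetilde b)$ surviving that your sketch does not dispatch. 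Your decomposition $\iota_1^*\hfcycle - \iota_0^*\hfcycle = (\partial\widetilde b,\psi(\widetilde b)) + (0,\iota_1^*h-\iota_0^*h-\psi(\widetilde b))$ and the appeal to Propositions \ref{pbwd} and \ref{geobordismequation} and Theorem \ref{currentialPushPull} are correct in spirit, and the observation that fiber integration along $[0,1]$ carries $F^p\Ah^n(\R\times X)$ into $F^p\Ah^{n-1}(X)$ is exactly right. But the argument is essentially re-deriving a special case of the long exact sequence at the cycle level, and the paper's proof shows that the long exact sequence, once available, makes the lemma nearly immediate. I would strongly recommend the paper's route.
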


\begin{remark} 
This lemma, and the ensuing proof, is inspired by \cite[Lemma 5.1]{Bunke2010}.
As in differential cohomology, this is a general result applying to all Hodge filtered cohomology theories. 
\end{remark}
\begin{remark}
The corresponding formula for differential cohomology provides a formula relating $f_1^*(\gamma)$ to $f_0^*(\gamma)$ for homotopic maps $f_1,f_0\colon X\to Y$. To get that implication in our context, we need the homotopy $f_\bullet$ to preserve filtrations, i.e., we would need $f_\bullet^* F^p\Ah^*(Y)\subset F^p\Ah^*(\R\times X)$. 
This is a severe restriction on permissible homotopies, and certainly it is not always the case that homotopic holomorphic maps can be connected by such a homotopy. 
In fact, it is not even always possible to connect two homotopic holomorphic maps by a homotopy through holomorphic maps.
\end{remark}

\begin{proof}[Proof of Lemma \ref{lemma:htpy_formula}]
Let $\pi\colon \R \times X\to X$ denote the projection, and let $\beta\in \Ah^{n-1}(\R \times X;\Vh_*)$ be a form with $d\beta\in F^p\Ah^n(\R\times X;\Vh_*)$ for which
$$
\hfcycle = \pi^*(\iota_0^*(\gamma)) + a[\beta].
$$
Note the identity
\begin{equation}\label{equationinproofofhtpyformula}
    \iota_1^*\gamma-\iota_0^*\gamma=a[\iota_1^*\beta-\iota_0^*\beta].
\end{equation}
We also have 
$$
R(\hfcycle)=R(\pi^*(\iota_0^*(\gamma))) + [d\beta],
$$ 
from which we get
\begin{align*}
    \int_{[0,1]\times X/X} R(\gamma) &= \int_{[0,1]\times X/X} d\beta \\
        &= \iota_1^*\beta-\iota_0^*\beta\quad \mod \ \Imm d.
\end{align*}
Together with \eqref{equationinproofofhtpyformula} this finishes the proof.
\end{proof}

Let $q \colon Y\to X$ be a smooth map. 
We denote by  
\begin{align}
\label{eq:def_Map_pitchfork,F}
\Map_*^{\pitchfork,q}\left(\Sigma^kX_+,MU(m,l)\right) \subset \Mappf\left(\Sigma^kX_+,MU(m,l)\right)
\end{align} 
the subset of maps $\gp$ such that $\gp \circ \Sigma^k q \in \Mappf\left(\Sigma^kY_+,MU(m,l)\right)$. 

\begin{lemma}
 \label{PontryaginThomIsNatural}
$\rho_\nabla$ is natural in the sense that for $q \colon Y\to X$ a smooth map, and $\gp\in \Map_*^{\pitchfork,q}\left(\Sigma^kX_+,MU(m,l)\right)$ we have: 
$$
\rhog (A(\gp\circ \Sigma^k q_+)) = q^*\rho_\nabla(A(\gp))\in \geocycles^n(Y). 
$$
\end{lemma}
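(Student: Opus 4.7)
My plan is to unfold the definitions on both sides of the claimed equality and exhibit a diffeomorphism between the underlying manifolds that respects all the geometric data. Writing $\rhog(A(\gp))$ as $(f_\gp, N_\gp, \nabla_\gp)$ with $Z = \gp^{-1}(\Grml) \subset X \times \R^k$ in the notation of diagram \eqref{diagramPontryaginThomExplained}, the pullback $q^*\rhog(A(\gp))$ as defined in \eqref{defpb} is the cycle whose underlying complex oriented map is $q^*f_\gp \colon q^*Z \to Y$, with normal bundle $G^*N_\gp$ and connection $G^*\nabla_\gp$, where $G \colon q^*Z = Z\times_X Y \to Z$ denotes the canonical projection.

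On the other hand, $\rhog(A(\gp \circ \Sigma^k q_+))$ is obtained by applying the same construction directly to $\gp \circ \Sigma^k q_+$, so its underlying manifold is $Z' := (\gp \circ \Sigma^k q_+)^{-1}(\Grml) \subset Y \times \R^k$. Since $\Sigma^k q_+$ restricts to $q \times \id_{\R^k}$ on $Y \times \R^k$, there is a natural set-theoretic bijection $\Phi \colon Z' \to q^*Z$ sending $(y, t)$ to $((q(y), t), y)$; equivalently, $Z'$ is literally the fibered product of $f_\gp$ with $q$, realized inside $Y \times \R^k$ rather than inside $(X \times \R^k) \times_X Y$. The transversality hypothesis built into the definition \eqref{eq:def_Map_pitchfork,F} of $\Map_*^{\pitchfork, q}$ ensures that $Z'$ is a smooth submanifold of $Y \times \R^k$ and that $\Phi$ is a diffeomorphism over $Y$.

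Once $\Phi$ is in hand, the remaining verifications are formal naturality statements: the restriction of $\gp \circ \Sigma^k q_+$ to $Z'$ factors as $\gp|_Z \circ G \circ \Phi$, so the normal bundle $N_{\gp \circ \Sigma^k q_+}$, its complex structure induced by the derivative of $\gp \circ \Sigma^k q_+$, and the pulled-back Narasimhan--Ramanan connection $\nabla_{m,l}$ are respectively identified with $\Phi^*G^*N_\gp$, the pullback of the complex structure on $N_\gp$, and $\Phi^*G^*\nabla_\gp$. The only step requiring any care is checking that the complex orientation of $f_{\gp \circ \Sigma^k q_+}$ induced by the derivative of $\gp \circ \Sigma^k q_+$ at $Z'$ matches the complex orientation on $q^*f_\gp$ obtained from pulling back the orientation on $f_\gp$ along $q$. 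This is the main obstacle, but it follows once one observes that both orientations are determined by the same linearization: under $\Phi$ the normal bundles of $Z' \hookrightarrow Y \times \R^k$ and $q^*Z \hookrightarrow q^*Z$ at corresponding points are both identified with the fiber of $\gamma_{m,l}$ over $\gp|_Z(G(\Phi(y,t)))$, and the isomorphism realizing this identification agrees with the derivative of $\gp \circ \Sigma^k q_+$ by the chain rule. Hence $\Phi$ is an isomorphism of geometric cycles over $Y$, which is precisely the claim.
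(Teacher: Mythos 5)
Your proof is correct and takes essentially the same approach as the paper: both unfold the two constructions and observe that, by the transversality hypothesis built into $\Map_*^{\pitchfork,q}$, the manifold $Z' = (\gp\circ\Sigma^k q_+)^{-1}(\Grml)$ is canonically identified with the fibered product $Z\times_X Y$, after which the normal bundles, complex orientations, and pulled-back Narasimhan--Ramanan connections are matched up by the chain rule. The paper records this identification more tersely via a single cartesian diagram, but the content is the same (note a small typo in your last paragraph: ``$q^*Z\hookrightarrow q^*Z$'' should read ``$q^*Z\hookrightarrow Y\times\R^k$'' or similar).
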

\begin{proof}
It is well-known that given smooth maps $f_1 \colon M_1\to M_2$ and $f_2 \colon M_2\to M_3$ such that $f_2\pitchfork S\subset M_3$, we have $f_1\pitchfork (f_2^{-1}(S))$ if and only if $(f_2\circ f_1) \pitchfork S$. From this we see that $q^*$ is defined on $\rhog(g)$.
Now the assertion follows from the following cartesian diagram
\begin{align*}
\xymatrix{
Z_Y \ar[d]_-{i_Y} \ar[r]^-{} & Z_X \ar[r]^-{\gp|_{Z_X}}\ar[d]_-{i_X} & \Gr_m\left(\C^{m+l}\right)\ar[d]^-{\iota_{m,l}}\\
U_Y \ar[d]_-{\pi_Y} \ar[r]^-{\id \times q} & U_X \ar[d]^-{\pi_X} \ar[r]^-{\gp|_{U_X}} & \gamma_{m,l} \\
Y \ar[r]^-q & X & 
}
\end{align*}
with $Z_X = Z_{g} = (g)^{-1}(\Gr_m(\C^{m+l}))$, $Z_Y = (q\times \id)^{-1}Z_X$ and
\begin{align*}
U_X&=g^{-1}(\gamma_{m,l}) \subset X \times\R^k \subset \Sigma^kX_+ \\
U_Y&=(q\times \id)^{-1}(U_X) \subset Y \times \R^k \subset \Sigma^kY_+. \qedhere
\end{align*}
\end{proof}

\begin{defn}\label{def:MUD_F_transversal_subset} 
We denote by $\MUhs^{\pitchfork,q}(p)_n(X)_0$ the subset of triples $(g,\omega,h)$ in $\MUhs^{\pitchfork}(p)_n(X)_0$ such that $g=A(\gp)$ for a map $\gp$ in $\Map_*^{\pitchfork,q}\left(\Sigma^k X_+, MU(m,l)\right)$. 
\end{defn}

Using the notation of Definition \ref{def:MUD_F_transversal_subset} the pullback for $\MUhs$ along a morphism $q \colon Y\to X$ of $\ManF$ is induced by the map
\begin{align*}
q^* \colon \MUhs^{\pitchfork,q}(p)_n(X)_0 &\to \MUhs^{\pitchfork}(p)_n(Y)_0\\
\left(A(\gp),\omega,h\right) &\mapsto \left(A(\gp\circ\Sigma^k q_+),q^*\omega,(\id_{\Delta^1} \times q)^*h\right).
\end{align*}

\begin{lemma}\label{lemma:HFC_concrete_triples_transversal_to_F}
For $(g, \omega, h)$ in $\MUhs^{\pitchfork,q}(p)_n(X)_0$, the following diagram commutes:
\begin{align*}
\xymatrix{
\MUhs^{\pitchfork,q}(p)_n(X)_0 \ar[r]^-{q^*} \ar[d]_-{\kappa} & \MUhs^{\pitchfork}(p)_n(Y)_0 \ar[d]^-{\kappa} \\
ZMU_q^n(p)(X)\ar[r]_{q^*} & ZMU^n(p)(Y).
}
\end{align*}
\end{lemma}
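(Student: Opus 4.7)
The plan is to unpack the definition of $\kappa$ and verify commutativity of the square componentwise. Writing $g = A(\gp)$ for some $\gp \in \Map_*^{\pitchfork,q}(\Sigma^k X_+, MU(m,l))$, we compute
\[
\kappa(q^*(g,\omega,h)) = \left(\rhog(A(\gp \circ \Sigma^k q_+)),\ q^*\omega,\ \pbalpha(A(\gp \circ \Sigma^k q_+)) + \int_{\Delta^1 \times Y/Y}(\id\times q)^*h\right)
\]
and
\[
q^*\kappa(g,\omega,h) = \left(q^*\rhog(g),\ q^*\omega,\ q^*\pbalpha(g) + q^*\!\!\int_{\Delta^1 \times X/X} h\right).
\]
Equality of the first components is exactly Lemma \ref{PontryaginThomIsNatural}, and equality of the second components is tautological. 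Hence the lemma reduces to two naturality statements modulo exact currents: first, that $q^*\pbalpha(g) = \pbalpha(A(\gp\circ\Sigma^k q_+))$, and second, that $q^*\int_{\Delta^1\times X/X}h = \int_{\Delta^1\times Y/Y}(\id\times q)^*h$.

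The second identity is the standard naturality of integration over the fiber, which falls out of Theorem \ref{currentialPushPull} applied to the cartesian square with horizontal maps $\id_{\Delta^1}\times q$ and vertical projections onto the second factor. For the first identity, recall $\pbalpha(g)=(-1)^k\pi_{X,*}\gp^*\alpha_{m,l}$, where $\pi_X\colon X\times\R^k\to X$ is the projection. Using that $\gp\circ\Sigma^k q_+$ restricted to $Y\times\R^k$ equals $\gp\circ(q\times\id_{\R^k})$, we obtain the equality of currents $(\gp\circ\Sigma^k q_+)^*\alpha_{m,l} = (\id\times q)^*\gp^*\alpha_{m,l}$. Then the naturality identity follows from Theorem \ref{currentialPushPull} applied to the cartesian square
\[
\xymatrix{
Y\times\R^k \ar[r]^-{\id\times q} \ar[d]_-{\pi_Y} & X\times\R^k \ar[d]^-{\pi_X} \\
Y \ar[r]_-q & X,
}
\]
in which $\pi_X$ is a submersion and hence automatically transverse to $q$.

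The main obstacle is ensuring that every currential pullback that appears is actually defined. This rests on two facts already built into the setup: that $\alpha_{m,l}$ has been chosen with $WF(\alpha_{m,l})\subset N(\iota_{m,l})$, and that both $\gp$ and $\gp\circ\Sigma^k q_+$ are transverse to $\iota_{m,l}$ by virtue of the condition $\gp \in \Map_*^{\pitchfork,q}$ from \eqref{eq:def_Map_pitchfork,F}. Together these guarantee that the wave-front sets of $\gp^*\alpha_{m,l}$ and of its pushforward $\pi_{X,*}\gp^*\alpha_{m,l}$ avoid $N(q)$, so $q^*\pbalpha(g)$ is well-defined, and the hypotheses of Theorem \ref{currentialPushPull} are met in both invocations. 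The identities then hold as currents, and in particular modulo exact currents, which finishes the proof.
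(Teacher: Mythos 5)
Your proof is correct and follows essentially the same approach as the paper: unpack $\kappa$ componentwise, invoke Lemma \ref{PontryaginThomIsNatural} for the first component, and use Theorem \ref{currentialPushPull} (the push-pull formula) for the third component. Your additional care with the wave-front-set hypothesis guaranteeing that $q^*\pbalpha(g)$ is defined is a welcome clarification that the paper leaves implicit.
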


\begin{proof}
Let $g=A(\gp)$ for $\gp\colon \Sigma^k X\to MU(m,l)$, and write $\left(\pi_X^{\Sigma X}\right)_*$ for the pushforward map $\Ds^*(\Sigma X)\to \Ds^*(X)$. Using Lemma \ref{PontryaginThomIsNatural} and the push-pull formula for currents of Theorem \ref{currentialPushPull}, we compute:
\begin{align*}
    q^*\kappa(g,\omega,h) &= q^*\left(\rhog(g), \omega, \pbalpha(g)+\int_{\Delta^1 \times X/X}h\right)\\
    &=\left(q^*\rhog(g), q^*\omega, q^*\left( \left(\pi_X^{\Sigma^k X}\right)_*\gp^*(\alpha_{m,l})+\int _{\Delta^1 \times X/X}h\right)\right)\\
    &= \Bigg(\rho_{\nabla}(A(\gp\circ\Sigma^k q_+)), q^*\omega, \\
    &\quad\quad\quad\quad\quad\quad\quad \left(\pi_Y^{\Sigma^k Y}\right)_*(\gp\circ\Sigma^k q_+)^*\alpha_{m,l}+\int_{\Delta^1\times Y/Y}(\id_{\Delta^1} \times q)^*h\Bigg)\\
    &=\kappa\left(A(\gp\circ\Sigma^k q_+), q^*\omega, (\id_{\Delta^1} \times q)^*h\right). \qedhere
\end{align*}
\end{proof}

We are now ready to show that homotopic triples yield cobordant Hodge filtered cycles. We again work within the context of diagram \eqref{diagramPontryaginThomExplained}.
\begin{lemma}\label{lemma:HFC_homotopies_yield_geometric_bordisms}
Let $(g_0,\omega_0,h_0)$ and $(g_1,\omega_1,h_1)$ be triples in $\MUhs^{\pitchfork}(p)_n(X)_0$ and assume there is a homotopy $(\gb, \omega_\bullet,\hb)$ between them. 
Then    
\[
\kappa(g_1,\omega_1,h_1)-\kappa(g_0,\omega_0,h_0)\in BMU^n(p)(X)
\]    
where $BMU^n(p)(X)$ is defined in \eqref{eq:def_of_geometric_bordism_data}. 
\end{lemma}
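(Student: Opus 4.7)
The plan is to interpret the homotopy $(\gb,\omb,\hb)$ as a Hodge filtered cycle on $\R\times X$ and then invoke the homotopy formula of Lemma~\ref{lemma:htpy_formula} to reduce the difference $\kappa(g_1,\omega_1,h_1)-\kappa(g_0,\omega_0,h_0)$ to an element in the image of $a$ applied to a form in $F^p$.

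First I would bring $\gb$ into the transverse locus. By Thom's transversality theorem applied relative to the boundary $\{0,1\}\times X$, there exists a smooth map $\gb_{\pitchfork}\in\Mapf(\Delta^1\times X, QMU_n)$ which agrees with $\gb$ on $\{0,1\}\times X$ (so its endpoint restrictions are exactly $g_0$ and $g_1$) and whose underlying representative $\gp_{\pitchfork}\colon \Sigma^{k}(\Delta^1\times X)_+\to MU(m,l)$ is transverse to $\iota_{m,l}$. Using the construction underlying Lemma~\ref{lemma:homotopy_induces_homotopy_on_forms}, the form $\hb$ can be modified to a new form $\hb_{\pitchfork}$ so that $(\gb_{\pitchfork},\omb,\hb_{\pitchfork})$ still satisfies the conditions of Lemma~\ref{thm:HFC_concrete_triples}, i.e.\ remains a homotopy between the original triples. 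Hence we may assume from the outset that $\gb$ is transverse.

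Next I would extend the homotopy from $\Delta^1\times X$ to $\R\times X$. Fix a smooth map $\pi_\Delta\colon \R\to\Delta^1$ which is the identity on $[0,1]$ and is locally constant outside a compact interval. Pulling back $(\gb,\omb,\hb)$ along $\pi_\Delta\times\id_X$ (and along $\pi_\Delta\times\id_{\Delta^1\times X}$ for $\hb$) yields a triple
$(\widetilde\gb,\widetilde\omb,\widetilde\hb)$ on $\R\times X$ whose underlying map $\widetilde\gp$ remains transverse to $\iota_{m,l}$ and which restricts at $\iota_i\colon X\to\R\times X$ to $(g_i,\omega_i,h_i)$. By Lemma~\ref{thm:HFC_concrete_triples_transversal} and Lemma~\ref{lemma:we_do_get_a_HFC_cycle} this triple represents a class
$[\widetilde\Gamma]\in \MUhs^n(p)(\R\times X)$, and
$\kappa(\widetilde\gb,\widetilde\omb,\widetilde\hb)\in ZMU^n(p)(\R\times X)$ is a well-defined Hodge filtered cycle with $R\bigl(\kappa(\widetilde\gb,\widetilde\omb,\widetilde\hb)\bigr)=\widetilde\omb$.

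Now I would combine the naturality of $\kappa$ with the homotopy formula. By Lemma~\ref{lemma:HFC_concrete_triples_transversal_to_F}, we have $\iota_i^*\,\kappa(\widetilde\gb,\widetilde\omb,\widetilde\hb)=\kappa(g_i,\omega_i,h_i)$ for $i=0,1$. Applying Lemma~\ref{lemma:htpy_formula} to the class of $\kappa(\widetilde\gb,\widetilde\omb,\widetilde\hb)$ in $MU^n(p)(\R\times X)$ gives
\[
[\kappa(g_1,\omega_1,h_1)]-[\kappa(g_0,\omega_0,h_0)] \;=\; a\!\left[\int_{[0,1]\times X/X}\widetilde\omb\right] \quad\text{in } MU^n(p)(X).
\]
Since $\R$ carries the trivial filtration, Definition~\ref{def:Hodge_filtration_on_product} gives $F^p\Ah^*(\R\times X;\Vh_*)=\Ah^*(\R)\otimes F^p\Ah^*(X;\Vh_*)$, so integration over the fiber sends $\widetilde\omb\in F^p\Ah^n(\R\times X;\Vh_*)$ into $F^p\Ah^{n-1}(X;\Vh_*)\subset \widetilde F^p\Ah^{n-1}(X;\Vh_*)$. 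By the definition of $BMU^n(p)(X)$ in \eqref{eq:def_of_geometric_bordism_data}, the class $a[\int_{[0,1]\times X/X}\widetilde\omb]$ therefore vanishes in $MU^n(p)(X)$, which means $\kappa(g_1,\omega_1,h_1)-\kappa(g_0,\omega_0,h_0)\in BMU^n(p)(X)$ as required.

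The main obstacle I anticipate is Step~1: adjusting the two-parameter form $\hb$ on $\Delta^1\times\Delta^1\times X$ consistently with a rel-boundary transversality deformation of $\gb$, so that all four face conditions of Lemma~\ref{thm:HFC_concrete_triples} continue to hold. The construction from the proof of Lemma~\ref{lemma:homotopy_induces_homotopy_on_forms} should extend to this setting by the same correction trick, but writing out the appropriate two-parameter version is the step that demands the most care. The remaining steps, in particular the preservation of transversality under $\pi_\Delta\times\id_X$ on the region where $\pi_\Delta$ is locally constant, are straightforward since in that region $\widetilde\gp$ factors through $g_0$ or $g_1$, which are themselves transverse.
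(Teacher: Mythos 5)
Your proof is correct and follows essentially the same approach as the paper: arrange the homotopy to be transverse and cylindrical near the endpoints, view the resulting triple as living on $\R\times X$, use naturality of $\kappa$ (Lemma~\ref{lemma:HFC_concrete_triples_transversal_to_F}) to identify endpoint restrictions, apply the homotopy formula Lemma~\ref{lemma:htpy_formula}, and finally note that $\int_{[0,1]\times X/X}\omb$ lies in $F^p\Ah^{n-1}(X;\Vh_*)$. One small slip: a smooth map $\pi_\Delta\colon\R\to[0,1]$ cannot simultaneously be the identity on all of $[0,1]$ and locally constant outside a compact interval (the one-sided derivative at $0$ would have to be both $1$ and $0$). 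What you actually want is a smooth $\pi_\Delta$ that is $0$ on $(-\infty,0]$, $1$ on $[1,\infty)$, and arbitrary smooth in between; this still pulls back endpoint data correctly and preserves transversality (since near the flat regions the composite factors through $g_0$ or $g_1$), and a change-of-variables argument shows $\int_{[0,1]\times X/X}(\pi_\Delta\times\id)^*\omb=\int_{[0,1]\times X/X}\omb$. This is exactly what the paper's phrase ``$g_t(x)$ is constant as a function of $t$ in a neighborhood of each endpoint'' is encoding. Your Step~1, making the transversality-plus-form-correction argument explicit, is a more careful spelling out of the paper's terser ``By Lemma~\ref{thm:HFC_concrete_triples} we can assume \dots''.
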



\begin{proof}  
By Lemma \ref{thm:HFC_concrete_triples} we can assume that
$$
(\gb,\omega_\bullet, \hb)\in \MUhs^\pitchfork(p)_n(\Delta^1 \times X)_0,
$$
and that $g_t(x)$ is constant as a function of $t$ in a neighborhood of each endpoint. We may then extend $(\gb,\omega_\bullet, \hb)$ to an element of $\MUhs^\pitchfork(p)_n(\R\times X)_0$, placing $(g_\bullet, \omega_\bullet, h_\bullet)$ in the domain of $\kappa$.
We get 
$$
\kappa(\gb,\omega_\bullet, \hb)\in ZMU^n(p)(\R \times X).
$$ 
Let $\iota_t\colon X\to \R \times X$ be inclusion at $t$. The pullback $\iota_t^*(\gb,\omega_\bullet, \hb)\in  \MUhs^\pitchfork(p)_n(X)_0$ is defined for $t=0,1$. By naturality of $\kappa$, we have 
$$
\iota_t^*\kappa(\gb,\omega_\bullet, \hb) = \kappa(g_t,\omega_t,h_t)\in ZMU^n(p)(X).
$$
Hence we get
$$
\kappa(g_1,\omega_1,h_1)-\kappa(g_0,\omega_0,h_0) = (\iota_1^*-\iota_0^*)(\kappa(\gb,\omega_\bullet,\hb)).
$$
By Lemma \ref{lemma:htpy_formula} we get
$$
[\kappa(g_1,\omega_1,h_1)-\kappa(g_0,\omega_0,h_0)] = a\left[\int_{\Delta^1 \times X/X} \omb \right].
$$
Since $\omb\in F^p\Ah^n(\Delta^1 \times X;\Vh_*)$, we have $\int_{\Delta^1 \times X/X}\omb \in F^p\Ah^{n-1}(X;\Vh_*)$. Since $a\left[F^p\Ah^{n-1}(X;\Vh_*)\right]$ is a subset of $BMU^n(p)(X)$, this proves the assertion.  
\end{proof}


Every element in $\MUhs^n(p)(X)$ can be represented by a triple in the subset $\MUhs^{\pitchfork,q}(p)_n(X)_0$. This follows from Thom's transversality theorem and Lemma \ref{lemma:homotopy_induces_homotopy_on_forms}.
Combining this with Lemmas \ref{lemma:HFC_concrete_triples_transversal_to_F} and \ref{lemma:HFC_homotopies_yield_geometric_bordisms}, we have proven the following result: 

\begin{theorem}\label{thm:kappa_commutes_with_pullback}
Let $q\colon Y \to X$ be a morphism in $\ManF$ and $p$ be an integer. 
Then we have $q^* \circ \kappa = \kappa \circ q_{\hs}^*$ where $q^*$  denotes the pullback in $MU^*(p)(-)$ and $q^*_{\hs}$ the pullback in $\MUhs^*(p)(-)$. 
\end{theorem}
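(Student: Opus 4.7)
The plan is simply to assemble the preceding three lemmas. Given a class $\gamma \in \MUhs^n(p)(X)$, the first step will be to choose a representative $(g,\omega,h)$ lying in the subset $\MUhs^{\pitchfork,q}(p)_n(X)_0$, so that pullback is defined at the cycle level on both sides of the desired identity. This choice is asserted in the paragraph immediately preceding the theorem statement: starting from an arbitrary representative produced via Lemma \ref{thm:HFC_concrete_triples_transversal}, one applies Thom's transversality theorem to perturb the underlying map $g^{\pitchfork}\colon \Sigma^kX_+ \to MU(m,l)$ so that both $g^{\pitchfork}$ and $g^{\pitchfork}\circ \Sigma^k q_+$ are transverse to $\iota_{m,l}$, while Lemma \ref{lemma:homotopy_induces_homotopy_on_forms} produces an accompanying form $h'$ so that the resulting triple is homotopic to $(g,\omega,h)$ and hence represents the same class $\gamma$.

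Once a representative in $\MUhs^{\pitchfork,q}(p)_n(X)_0$ has been fixed, the cycle-level identity
\[
\kappa(q^*(g,\omega,h)) \;=\; q^*\kappa(g,\omega,h) \quad \text{in } ZMU^n(p)(Y)
\]
is exactly the content of Lemma \ref{lemma:HFC_concrete_triples_transversal_to_F}. Passing to cobordism classes in $MU^n(p)(Y)$ then yields $\kappa(q_{\hs}^*\gamma) = q^*\kappa(\gamma)$, provided the result is independent of the chosen representative. For $\kappa \circ q_{\hs}^*$ independence is automatic from the well-definedness of $q_{\hs}^*$ and $\kappa$; for $q^* \circ \kappa$ one uses that any two representatives of $\gamma$ in $\MUhs^{\pitchfork,q}(p)_n(X)_0$ are connected by a homotopy in $\MUhs^{\pitchfork}(p)_n(X)_0$, which by Lemma \ref{lemma:HFC_homotopies_yield_geometric_bordisms} produces a bordism between the associated Hodge filtered cycles, and then Theorem \ref{PullbackHFCBordism} guarantees that this bordism remains a bordism after applying $q^*$.

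The only point requiring real care is the joint transversality step: one must be able to perturb $g^{\pitchfork}$ so that both $g^{\pitchfork}$ and $g^{\pitchfork}\circ \Sigma^k q_+$ become transverse to $\iota_{m,l}\colon \Gr_m(\C^{m+l}) \into \gamma_{m,l}$ simultaneously, without leaving the smoothness locus $(g^{\pitchfork})^{-1}(\gamma_{m,l})$. This is standard: a generic smooth perturbation of $g^{\pitchfork}$ supported in a neighborhood of $(g^{\pitchfork})^{-1}(\gamma_{m,l})$ achieves transversality of $g^{\pitchfork}$ to $\iota_{m,l}$ by Thom's theorem, and since $\Sigma^k q_+$ is smooth on the preimage, a second generic perturbation simultaneously achieves transversality of $g^{\pitchfork}\circ \Sigma^k q_+$. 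The subsequent adjustment of $h$ via Lemma \ref{lemma:homotopy_induces_homotopy_on_forms} is a formal consequence. I expect this transversality argument to be the main, though not deep, obstacle; the rest of the proof is bookkeeping and citation of the already-established lemmas.
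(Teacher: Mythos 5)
Your proposal is correct and follows the same route as the paper: the paper's own proof is the single preceding paragraph, which reduces the theorem to exactly the three ingredients you cite (representability in $\MUhs^{\pitchfork,q}(p)_n(X)_0$ via Thom transversality and Lemma \ref{lemma:homotopy_induces_homotopy_on_forms}, the cycle-level commutative square of Lemma \ref{lemma:HFC_concrete_triples_transversal_to_F}, and well-definedness via Lemma \ref{lemma:HFC_homotopies_yield_geometric_bordisms}). Your additional remarks about joint transversality and the invocation of Theorem \ref{PullbackHFCBordism} merely make explicit bookkeeping that the paper leaves implicit, and are correct.
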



\subsection{The map \texorpdfstring{$\kappa$}{kappa} is an isomorphism}
\label{subsec:kappa_is_an_iso}

We  will now show that $\kappa$ is a homomorphism and that it respects the structure maps of Hodge filtered cohomology theories. The respective long exact sequences of the two theories will then imply that $\kappa$ is an isomorphism. 

The addition on $\MUhs^n(p)(X)$ is induced by the following binary operation on $\MUhs^{\pitchfork}(p)_n(X)_0$;
$$
(A(g_1),\omega_1,h_1)+(A(g_2),\omega_2,h_2)=(A((g_1\vee g_2)\circ \pinch), \omega_1+\omega_2, h_1+h_2),
$$
for maps $g_1,\ g_2\in\Mappf(\Sigma^kX_+,MU(m,l))$.
Here $\pinch\colon \Sigma^kX_+\to \Sigma^kX_+\vee\Sigma^kX_+$ is the pinch map which collapses $X\times \R^{k-1}\times\{0\}$. We observe that 
$$
\rhog\colon \Mapf(X,QMU_n) \to \geocycles^n(X)
$$
is a homomorphism in the sense that 
$$
\rhog(A((g_1\vee g_2)\circ \pinch)) = \rhog(A(g_1))+\rhog(A(g_2)).
$$
This is evident upon noting the diffeomorphism
$$
(g_1\vee g_2)^{-1}(\gamma_{m,l}) \cong g_1^{-1}\gamma_{m,l}\bigsqcup g_2^{-1}(\gamma_{m,l}).
$$ 
We can deduce the following result:

\begin{lemma}\label{lemma:kappaishomomorphism}
The map  $\kappa \colon \MUhs^n(p)(X) \to MU^n(p)(X)$ is a group homomorphism. \qed
\end{lemma}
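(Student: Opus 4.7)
The plan is to verify that each of the three components of $\kappa(g,\omega,h)=\bigl(\rhog(g),\,\omega,\,\pbalpha(g)+\int_{\Delta^1\times X/X}h\bigr)$ is additive with respect to the binary operation defined on $\MUhs^{\pitchfork}(p)_n(X)_0$ just above the statement. The addition on the target $ZMU^n(p)(X)\subset \geocycles^n(X)\times \Ds^{n-1}(X;\Vh_*)/d\Ds^{n-2}(X;\Vh_*)$ is componentwise, with addition in $\geocycles^n(X)$ given by disjoint union of geometric cycles, so matching the three components separately suffices.

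First I would dispose of the two easy components. The $\omega$-part is immediate from the definition $(\omega_1,\omega_2)\mapsto \omega_1+\omega_2$. The $\rhog$-part is the additivity observation just before the statement of the lemma: the diffeomorphism
\[
((g_1\vee g_2)\circ \pinch)^{-1}(\gamma_{m,l}) \;\cong\; g_1^{-1}(\gamma_{m,l})\;\sqcup\; g_2^{-1}(\gamma_{m,l})
\]
identifies the pulled-back complex-oriented map with the disjoint union of the individual ones, and the pulled-back connections with the two pieces of $\nabla_1\sqcup \nabla_2$, which by definition of addition in $\geocycles^n(X)$ is $\rhog(A(g_1))+\rhog(A(g_2))$.

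The remaining content is the additivity of the third component. Linearity of integration over the fiber in $h$ is immediate, so everything reduces to proving
\[
\pbalpha\bigl(A((g_1\vee g_2)\circ \pinch)\bigr)\;=\;\pbalpha(A(g_1))+\pbalpha(A(g_2)).
\]
Using $\pbalpha(A(\gp))=(-1)^k\pi_*\gp^*\alpha_{m,l}$, the key point is that $\pinch$ is a local diffeomorphism away from the collapsed locus $X\times\R^{k-1}\times\{0\}$ that is sent to the wedge basepoint; since the wedge basepoint maps to $\infty\in MU(m,l)$ and the wavefront-set refinement of $\alpha_{m,l}$ satisfies $WF(\alpha_{m,l})\subset N(\iota_{m,l})\subset T^*\gamma_{m,l}$, the current pullback $((g_1\vee g_2)\circ \pinch)^*\alpha_{m,l}$ is supported in the disjoint union of the two open sets on which $\pinch$ is a diffeomorphism onto its image in each wedge summand. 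On those pieces it agrees with $g_i^*\alpha_{m,l}$, so additivity follows after applying the linear pushforward $\pi_*$.

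I do not expect a significant obstacle here: the whole argument is a routine application of the local behaviour of current pullback and pushforward combined with the disjoint-preimage description of the pinch-and-wedge construction, together with the already-proved additivity of $\rhog$.
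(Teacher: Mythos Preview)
Your argument is correct and matches the paper's approach: the paper records the additivity of $\rhog$ via the disjoint-preimage diffeomorphism and then simply places a \qed, leaving the remaining components implicit. You have spelled out what the paper leaves to the reader, in particular the additivity of $\pbalpha$.

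One small comment on that last step: the wavefront-set condition $WF(\alpha_{m,l})\subset N(\iota_{m,l})$ is what guarantees that the pullback along a transverse map is \emph{defined}; it is not what localises the support. The reason $((g_1\vee g_2)\circ\pinch)^*\alpha_{m,l}$ is supported away from the collapsed hyperplane is simply that this hyperplane is sent to $\infty\in MU(m,l)$, and $\alpha_{m,l}$ is a rapidly decreasing current on $\gamma_{m,l}$. Once that is noted, the pinch map restricts on each open half-space to a fibrewise diffeomorphism over $X$ onto $\R^k\times X$, so $\pi_*$ of the pullback splits as the sum of the two individual $\pi_*g_i^*\alpha_{m,l}$, exactly as you say.
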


Next we will show that $\kappa$ respects the structure maps. We lift the structure maps $I_{\hs}$, $R_{\hs}$ and $a_{\hs}$ of $\MUhs(p)$ as a Hodge filtered theory from the level of maps in the homotopy category to the level of $0$-simplices of the simplicial mapping space using Lemma \ref{thm:HFC_concrete_triples} as follows. 
\begin{align*}
    I_\hs\colon \MUhs(p)_n(X)_0 &\to \mapsm(X,QMU_n)\\
    (g,\omega,h) &\mapsto g  \\
     R_\hs\colon \MUhs(p)_n(X)_0 &\to F^p\Ah^n(X;\Vh_*)\\
     (g,\omega,h) &\mapsto \omega, \\
      a_\hs\colon d^{-1}\left(F^p\Ah^n(X;\Vh_*)\right)^{n-1} &\to \MUhs(p)_n(X)_0 \\
     h&\mapsto (0,dh, \tau_1 h)
\end{align*}
where $\tau_1$ is the map $\tau_1=d\circ h_0\colon \Ah^{n-1}(X;\Vh_*)\to \Ah^n(\Delta^1 \times X; \Vh_*)_{\cl}$ 
defined in \eqref{deftau}. 
Recall from \eqref{equationiotatau} that we have $i_0^*\tau_1 h=dh$ and $i_1^*\tau_1 h=0$, so that, using Lemma \ref{thm:HFC_concrete_triples},  $a_\hs(h)=(0,dh,\tau_1 h)$ represents a class in $\MUhs^n(p)(X)$.
We have a Pontryagin--Thom map
\begin{align*}
\rho\colon \Mapf\left(X,QMU_n\right) \to ZMU^n(X),
\end{align*}
defined by $\rho=I\circ \rhog$, i.e.,  $\rho(\gp)=(f_\gp,N_{\gp})$.
We denote the induced map $MU_h^n(X)\to MU^n(X)$ also by $\rho$. 
By Thom's transversality theorem, the homotopy classes in $\Mapf\left(X,QMU_n\right)\subset \Map(X,QMU_n)$ are all of $MU_h^n(X)$, 
where we use the subscript $h$ to indicate that we mean the homotopy-theoretic $MU$-cohomology group of homotopy classes of maps $X\to QMU_n$, as opposed to Quillen's geometric $MU$-cohomology groups, recalled in section \ref{Section:GeometricHFCBordism}. It is not hard to prove that $\rho$ is an isomorphism.

\begin{lemma}\label{lemma:kappa_respects_structure_maps}
The map $\kappa\colon \MUhs^n(p)(X)\to MU^n(p)(X)$ respects the structure maps:
\begin{align*}
    \kappa\circ a_\hs &= a,\\
    I\circ \kappa &= \rho \circ I_\hs \text{ and}\\
    R\circ \kappa &= R_\hs.
\end{align*}
\end{lemma}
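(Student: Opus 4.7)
I will handle $I \circ \kappa = \rho \circ I_\hs$ and $R \circ \kappa = R_\hs$ first, as both are essentially definitional, and then address $\kappa \circ a_\hs = a$, which requires one short fiber-integration computation.

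For the first two identities, I unwind the definition of $\kappa$ on a representative $(g,\omega,h)\in\MUhs^{\pitchfork}(p)_n(X)_0$:
\begin{equation*}
\kappa(g,\omega,h) = \left(\rhog(g),\ \omega,\ \pbalpha(g) + \int_{\Delta^1 \times X/X} h\right).
\end{equation*}
By construction of $\rhog$, the underlying complex-oriented map of the geometric cycle $\rhog(g) = (f_g, N_g, \nabla_g)$ is $f_g = \rho(g)$, so $I(\kappa(g,\omega,h)) = f_g = \rho(g) = \rho(I_\hs(g,\omega,h))$. That $R(\kappa(g,\omega,h)) = \omega = R_\hs(g,\omega,h)$ is exactly the content of Lemma \ref{lemma:we_do_get_a_HFC_cycle}. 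Both identities then descend from the cycle level to $\MUhs^n(p)(X)$ without further work.

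For $\kappa \circ a_\hs = a$, I will evaluate $\kappa$ on $a_\hs(h) = (0, dh, \tau_1 h)$:
\begin{equation*}
\kappa(a_\hs(h)) = \left(\rhog(0),\ dh,\ \pbalpha(0) + \int_{\Delta^1 \times X/X} \tau_1 h\right).
\end{equation*}
The class $0 \in \Mapf(X, QMU_n)$ is represented by the constant map at the basepoint $\infty \in MU(m,l)$: its preimage of the zero-section $\Gr_m(\C^{m+l})$ is empty and it misses the support of $\alpha_{m,l}$ (which lies in $N(\iota_{m,l}) \subset \gamma_{m,l}$), so both $\rhog(0) = 0$ and $\pbalpha(0) = 0$. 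Hence it suffices to show that $\int_{\Delta^1 \times X/X} \tau_1 h$ represents $h$ in $\Ds^{n-1}(X;\Vh_*)/d\Ds^{n-2}(X;\Vh_*)$. This will follow from Lemma \ref{tauInverseToIntegration}, which asserts that integration over the fiber is a chain homotopy equivalence with homotopy inverse $\tau$; alternatively, one may apply the Stokes-type identity of Remark \ref{rem:order_of_Delta_and_X_integration} together with $\iota_0^* \tau_1 h = dh$, $\iota_1^* \tau_1 h = 0$, and $d\tau_1 h = 0$ from \eqref{equationiotatau} to pin down the integral on the nose.

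The only obstacle beyond routine bookkeeping will be aligning the sign conventions for the connecting homomorphism $a_\hs$, extracted from the long exact sequence of the homotopy fiber in Remark \ref{rem:MUhs_is_a_Hodge_filtered_extension}, with the cycle-level formula for $a$ in \eqref{structuremapsGeoHFCBordism}, and ensuring the orientation of $\Delta^1$ is used consistently as specified in Remark \ref{rem:order_of_Delta_and_X_integration}. No further geometric input beyond Lemma \ref{lemma:we_do_get_a_HFC_cycle} is needed.
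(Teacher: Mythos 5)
Your proof follows the same route as the paper: $I\circ\kappa = \rho\circ I_\hs$ and $R\circ\kappa = R_\hs$ drop out of Definition \ref{def:kappa_on_set_of_transversal_maps} and Lemma \ref{lemma:we_do_get_a_HFC_cycle}, and $\kappa\circ a_\hs = a$ reduces to $\rhog(0)=0$, $\pbalpha(0)=0$, and the identity $\int_{\Delta^1\times X/X}\tau_1 h = h$ supplied by Lemma \ref{tauInverseToIntegration}, exactly as in the paper (which leaves the first two vanishings implicit). One small caveat: your "alternative" via the Stokes-type identity of Remark \ref{rem:order_of_Delta_and_X_integration} only pins down $d\int_{\Delta^1\times X/X}\tau_1 h$, not the integral itself modulo closed currents; to genuinely re-derive the value one should instead apply that identity to $g(1-t)\pi_X^*h$ (whose fiber integral vanishes identically, having no $dt$ component), which forces $\int_{\Delta^1\times X/X}\tau_1 h$ to equal $\pm h$ depending on the orientation convention chosen for the fiber $\Delta^1$.
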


\begin{proof}
Let $[h]\in H^{n-1}\left(X;\frac{\Ah^*}{F^p}(\Vh_*)\right)$ be a class represented by a an element $h \in \Ah^{n-1}(X;\Vh_*)$, with $dh\in F^p\Ah^n(X;\Vh_*)$.
We have $\int_{\Delta^1 \times X/X} \tau_1h=h$ by Lemma \ref{tauInverseToIntegration}. 
By the definition of $\kappa$ in Definition \ref{def:kappa_on_set_of_transversal_maps} we get 
\begin{align*}
\kappa \circ a_\hs([h]) & = 
a[h].
\end{align*}
 Compatibility with $I$ holds by our choice of isomorphism $\rho\colon MU_h^n(X)\cong MU^n(X)$.  
Finally, the equality
\begin{align*}
R\circ \kappa[(g,\omega,h)] =[\omega ]
\end{align*}
was verified on the level of forms in the proof of Lemma \ref{lemma:we_do_get_a_HFC_cycle}. 
\end{proof}

\begin{theorem}\label{thm:kappa_is_an_isomorphism}
For all integers $n$ and $p$ and every $(X,F^*)\in \ManF$, the homomorphism $\kappa \colon \MUhs(p)^n(X)\to MU^n(p)(X)$ is an isomorphism of groups. 
\end{theorem}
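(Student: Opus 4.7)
The plan is to deduce the theorem from a five lemma argument applied to the long exact sequences of the two Hodge filtered extensions. By Remark \ref{rem:MUhs_is_a_Hodge_filtered_extension} and Example \ref{HFC_is_a_HF_extension}, both $\MUhs^*(p)$ and $MU^*(p)$ satisfy the axioms of Definition \ref{def:axioms}; the underlying cohomology theory is the homotopical $MU_h^*$ in the first case, and Quillen's geometric $MU^*$ in the second. In particular each fits into a long exact sequence of the form \eqref{eq:long_es_intro}.

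By Lemma \ref{lemma:kappaishomomorphism}, $\kappa$ is a group homomorphism, and by Lemma \ref{lemma:kappa_respects_structure_maps} it intertwines the structure maps in the sense that $\kappa\circ a_\hs = a$, $I\circ \kappa = \rho\circ I_\hs$, and $R\circ \kappa = R_\hs$, where $\rho\colon MU_h^*(X)\to MU^*(X)$ is the classical Pontryagin--Thom isomorphism. To assemble a morphism of long exact sequences I also need to check that the square
\begin{equation*}
\xymatrix{
MU_h^n(X)\ar[r]^-{\overline{\phi^p}}\ar[d]_-{\rho} & H^n\left(X;\tfrac{\Ah^*}{F^p}(\Vh_*)\right)\ar@{=}[d] \\
MU^n(X)\ar[r]^-{\overline{\phi^p}} & H^n\left(X;\tfrac{\Ah^*}{F^p}(\Vh_*)\right)
}
\end{equation*}
commutes. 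This follows from the cohomological identity $[\phi_{m,l}]=[\phi_{\nabla_{m,l}}]$ in $H^{2m}(MU(m,l);\Vh_*)$ recorded in \eqref{eq:comparison_of_fund_currents_in_cohomology}, together with the formula $(f_g)_*K^p(N_g)=\pi_*g^*\phi_{\nabla_{m,l}}$ from \eqref{fundamentalEquationforFundamentalCurrents}: pulling back either representative of the universal class along a transverse map computes $\overline{\phi^p}$ in the corresponding model.

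Putting these ingredients together yields a commutative ladder
\begin{equation*}
\xymatrix@C=1.1em{
MU_h^{n-1}(X)\ar[r]^-{\overline{\phi^p}}\ar[d]_-{\rho} & H^{n-1}\ar[r]^-{a_\hs}\ar@{=}[d] & \MUhs^n(p)(X)\ar[r]^-{I_\hs}\ar[d]_-{\kappa} & MU_h^n(X)\ar[r]^-{\overline{\phi^p}}\ar[d]_-{\rho} & H^n\ar@{=}[d] \\
MU^{n-1}(X)\ar[r]^-{\overline{\phi^p}} & H^{n-1}\ar[r]^-{a} & MU^n(p)(X)\ar[r]^-{I} & MU^n(X)\ar[r]^-{\overline{\phi^p}} & H^n
}
\end{equation*}
whose rows are exact by Theorem \ref{longexactseq} and the analogous sequence for $\MUhs^*(p)$ from Remark \ref{rem:MUhs_is_a_Hodge_filtered_extension}. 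Since $\rho$ is an isomorphism by Thom's theorem and the two identity maps are trivially isomorphisms, the five lemma forces $\kappa$ to be an isomorphism in each degree $n$.

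The only non-routine point I anticipate is the commutativity of the square involving $\overline{\phi^p}$, which requires tracing through how the two models of $MU$-cohomology both recover the same characteristic class $\phi^p$ on the image $\rho[g]=(f_g,N_g)$; however this reduces immediately to \eqref{eq:comparison_of_fund_currents_in_cohomology} and the naturality of $\pi_*$ and pullback already established in Section \ref{Section:GeometricHFCBordism}. Once that is in place, the conclusion is purely formal.
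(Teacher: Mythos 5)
Your proposal is correct and follows the same route as the paper: Lemma \ref{lemma:kappa_respects_structure_maps} gives the commutative ladder of long exact sequences, and the five lemma finishes the job since $\rho$ and the identity on $H^{*}\left(X;\frac{\Ah^*}{F^p}(\Vh_*)\right)$ are isomorphisms. You are slightly more explicit than the paper in checking commutativity of the square involving $\overline{\phi^p}$ (via \eqref{eq:comparison_of_fund_currents_in_cohomology} and \eqref{fundamentalEquationforFundamentalCurrents}, equivalently \eqref{dpbalphaeq}), which the paper leaves implicit in the phrase ``fits into a morphism of long exact sequences.''
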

\begin{proof}
It follows from Lemma \ref{lemma:kappa_respects_structure_maps} that $\kappa$ fits into a morphism of long exact sequences:
\[
\xymatrix{
\cdots \ar[r] & H^{n-1}(X;\frac{\Ah^*}{F^p}(\Vh_*)) \ar[r]^-{a_\hs}\ar[d]^-{\id} & \MUhs^n(p)(X)\ar[r]^-{I_\hs} \ar[d]^{\kappa} & MU_h^n(X)\ar[d]^{\rho}\ar[r] & \cdots \\
\cdots \ar[r] & H^{n-1}(X;\frac{\Ah^*}{F^p}(\Vh_*)) \ar[r]^-a & MU^n(p)(X)\ar[r]^-{I}  & MU^n(X)\ar[r] & \cdots 
}
\]
Since the outer vertical maps are isomorphisms for each $n$ and $p$, the five-lemma implies that $\kappa$ is an isomorphism. 
\end{proof}

Together with Theorem \ref{thm:map_MUD_to_MUhs_is_an_iso} 
this finishes the proof of Theorem \ref{thm:mainTheoremKappaintro}.

\begin{remark}
In fact, the proof of Theorem  \ref{thm:kappa_is_an_isomorphism} shows that $\kappa$ is an isomorphism of Hodge filtered extensions over $(MU^*,\phi^p)$ in the sense of Definition \ref{def:axioms}. 
Together with Remark \ref{rem:MUD_MUhs_iso_is_Hodge_filtered} this shows that the natural isomorphism $\MUD^*(p)(X) \cong MU^*(p)(X)$ is an isomorphism of Hodge filtered extensions over $(MU^*,\phi^p)$ on $\Manc$. 
\end{remark}

\begin{remark}
Recall from \cite{hfc} and section \ref{subsec:products} that by taking direct sums over all $n,p\in \Z$, both $\MUD^*(*)(X)$ and $MU^*(*)(X)$ become bigraded rings. 
We are optimistic that one can show that the isomorphism $\MUD^*(*)(X) \cong MU^*(*)(X)$ is actually an isomorphism of bigraded rings. 
\end{remark}


\bibliographystyle{amsalpha}

\end{document}